\definecolor{mydeepgreen}{HTML}{017A79}
       \newtheorem{lemma}{\bf Lemma}[section]
       \newtheorem{theorem}{\bf Theorem}[section]
       \newtheorem{corollary}{\bf Corollary}[section]
       \newtheorem{definition}{\bf Definition}[section]
       \newtheorem{remark}{\bf Remark}[section]
       \numberwithin{equation}{section}
\begin{document}
\title{{\sl A generic approach via relative singularity and controllability: Frequency-preserving with arbitrarily weak regularity in parameterized Hamiltonian systems}}
\author{Zhicheng Tong $^{\mathcal{z}}$, Yong Li $^{\mathcal{x}}$}

\renewcommand{\thefootnote}{}
\footnotetext{\hspace*{-6mm}

\begin{tabular}{l   l}	$^\mathcal{z}$~School of Mathematics, Jilin University, Changchun 130012, P. R.  China. \url{tongzc20@mails.jlu.edu.cn}\\	$^{\mathcal{x}}$~The corresponding author. School of Mathematics, Jilin University, Changchun 130012, P. R.  China; \\  Center for Mathematics and Interdisciplinary Sciences,  Northeast  Normal  University, Changchun 130024, \\P. R. China.  \url{liyong@jlu.edu.cn}
\end{tabular}}

\date{}
\maketitle

\begin{abstract}
In this paper, we introduce a novel and generic approach to prove the persistence of frequency-preserving invariant tori in parameterized Hamiltonian systems, addressing irregular continuity with respect to parameters. Unlike traditional methods that strongly rely on domain extraction techniques or uniform weak convexity of the frequency mapping, we propose the concepts of relative singularity and controllability for the first time. These concepts enable us to deal with a wide range of explicit parameterized Hamiltonian systems with arbitrarily weak regularity, thereby overcoming a previously insurmountable challenge. We also construct several counterexamples to highlight the indispensability of our new conditions in the sense of frequency-preserving. Furthermore, we demonstrate the broad applicability of our results to various cases with explicit arbitrarily weak regularity, including the partial frequency-preserving case and the  infinite-dimensional case without any spectral asymptotics. Overall, our approach, based on the concepts of relative singularity and controllability, illustrates its genericity in the frequency-preserving KAM theory.\\
\\
{\bf Keywords:} {Parameterized Hamiltonian systems, frequency-preserving invariant tori, explicit arbitrarily weak regularity, relative singularity, controllability.}\\
{\bf2020 Mathematics Subject Classification:} {37J40, 70H08, 70K43, 37K55}
\end{abstract}


\section{Introduction and the main KAM results}\label{FPKAMintro}
The motivation for the celebrated KAM theory arises from the stability problems in celestial mechanics. It stems from the works of Kolmogorov and Arnold \cite{MR0140699,MR0163025,MR0170705,MR0068687} and Moser \cite{MR4201442,MR0147741,MR0199523}. To be more precise, it mainly concerns the preservation of quasi-periodic and almost periodic  trajectories of Hamiltonian systems under small perturbations. The main challenge in KAM theory is dealing with small divisors, which can be effectively controlled via a linear iteration (Newtonian iteration) that admits super-exponential convergence.
So far, KAM theory has been well developed and widely applied to a variety of dynamical systems and PDEs. For some fundamental developments, we refer to Kuksin \cite{MR0911772}, Eliasson \cite{MR1001032}, P\"oschel \cite{MR1022821}, Treshch\"{e}v \cite{MR1025685}, Wayne \cite{MR1040892}, Bourgain \cite{MR1316975,MR1345016}, Kuksin and P\"oschel \cite{MR1370761}, etc. We also mention the readable survey of Sevryuk \cite{MR2078576}.
As to recent advancements and summaries in this direction, see Bambusi and Gr\'{e}bert \cite{MR2272975}, Khanin et al. \cite{MR2276445}, Berti et al. \cite{MR3112201}, Khesin et al. \cite{MR3269186}, Eliasson et al. \cite{MR3357183}, Medvedev et al. \cite{MR3366638}, Chierchia and Procesi \cite{MR4570702}, Qian et al. \cite{MR4669322}, Liu and Wang \cite{arXiv:2502.10969}, and the references therein. We would also like to mention the recent work on converse KAM theory by Cheng and Wang \cite{MR3061774}, Wang \cite{MR4385768,WLARXIV}, Sorrentino and Wang \cite{arXiv:2504.06773}, and the references therein.

\setcounter{footnote}{0}
\renewcommand{\thefootnote}{\fnsymbol{footnote}}
Focusing on the question of how much of the dynamics can be preserved under perturbations, the classic KAM theory tells us that, for an analytic, integrable Hamiltonian system, most quasi-periodic tori survive small analytic perturbations. However, the toral frequencies are generally drifted subjected to perturbations, unless the Hamiltonian system under consideration has certain nondegeneracy (e.g., the Kolmogorov nondegenerate condition in \cite{MR0163025,MR0068687}).
In the absence of such nondegeneracy, the problem of finding a KAM torus possessing the same frequency as the unperturbed one is not a simple issue, while being fundamental in KAM theory. Another instance of that same challenge is weakening the continuous dependence in parameterized settings, e.g., from the bidirectional Lipschitz regularity (e.g., see Kuksin and P\"oschel \cite{MR1370761}, Berti and Biasco \cite{MR2819413}, Gr\'{e}bert and Thomann \cite{MR2837120}, Liu and Yuan \cite{MR2842962}) to H\"older regularity or even arbitrarily weak regularity with respect to the parameter. On this aspect, by introducing certain topological degree condition and (uniform) weak convexity condition, Du et al. \cite{DL} obtained a frequency-preserving KAM theorem via H\"older continuity with respect to the parameter, through a parameter translation technique and a quasi-linear (quasi-Newtonian) KAM iterative scheme. Building upon the previous contributions, Tong et al. \cite{MR4731278} recently extended this result by assuming only modulus of continuity for Hamiltonian systems without dimensionality restrictions. Similarly, Liu et al. \cite{MR4806284} investigated mappings with the intersection property and established a frequency-preserving version of Moser's theorem. Later, Du et al. \cite{DTL}\footnote{\cite{DTL} appeared after this paper, and the infinite-dimensional KAM theorem presented therein can be significantly optimized based on the novel approach introduced in this paper, as seen in Section \ref{FPKAMINFINITE}. However, these two papers do not have an inclusive relationship with each other; rather, each has its own independent focus and innovation.} gave the detailed proof of the frequency-preserving KAM theorem for infinite-dimensional Hamiltonian systems in \cite{MR4731278} and further did not impose any spectral asymptotic conditions for  frequency mappings.
 
Back to our concern on frequency-preserving in parameterized KAM theory,  it should be noted that requiring only the topological degree condition is not sufficient, as demonstrated by the counterexample given in \cite{DL}. Indeed, under other existing techniques, the new parameter in each KAM iteration may not converge. Instead, the counterexample constructed in \cite{DL} shows that the parameter sequence may exhibit repeated oscillations, rendering the corresponding KAM scheme invalid.  However, the uniformity of the weak convexity condition, while effective, limits its applicability in a broader range of dynamical systems. Specifically, it appears challenging to construct explicit examples that possess regularity \textit{strictly weaker} than the Lipschitz type, as it is necessary to consider whether the frequencies constitute an empty set (see  \cite[Section 6]{MR4731278} for a more detailed discussion). \textbf{Therefore, one of the main motivations of this paper is to eliminate this essential restriction and instead introduce a Relative Singularity Condition and a Controllability Condition.} This allows us to construct many \textit{explicit} parameterized Hamiltonian systems that admit frequency-preserving KAM tori, while still requiring only continuous dependence on the parameter. This was not achievable in existing works. One will also see later that our new assumptions proposed in Theorem \ref{FPKAMT1} are indispensable for frequency-preserving, as illustrated by certain counterexamples constructed in Section \ref{FPKAMCONTEREXAMPLES}. Moreover, we provide additional counterexamples to discuss various basic problems related to frequency-preserving. As a consequence, we provide a nearly comprehensive study of frequency-preserving KAM theory in parameterized settings.

Let us get in the main topic. Consider the parameterized family of the perturbed Hamiltonian systems
\begin{equation}\label{FPKAMHamilton}
	\left\{ \begin{gathered}
		H:G \times {\mathbb{T}^n} \times \mathcal{O} \to {\mathbb{R}^1}, \hfill \\
		H\left( {y,x,\xi,\varepsilon } \right) = \left\langle {\omega \left( \xi  \right),y} \right\rangle  + \varepsilon P\left( {y,x,\xi  } \right), \hfill \\
	\end{gathered}  \right.
\end{equation}
where $x$ is the angle variable in the standard torus $ \mathbb{T}^n = \mathbb{R}^n / 2\pi \mathbb{Z}^n $, $y$ is the action variable in $G \subset \mathbb{R}^n$, $\xi$ is a parameter in $\mathcal{O} \subset \mathbb{R}^m$, and $n, m \in \mathbb{N}^+$. Here, $G$ and $\mathcal{O}$ are connected compact domains with interior points. Moreover, $\omega(\cdot)$ is continuous in $\xi$ on $\mathcal{O}$, $P(\cdot, \cdot, \xi)$ is real analytic in $y$ and $x$ on $G \times \mathbb{T}^n$, $P(y, x, \cdot)$ is only continuous in the parameter $\xi$, and $\varepsilon > 0$ is sufficiently small. Such Hamiltonian systems are usually called parameterized ones.

For the sake of convenience, we introduce the following notations. Denote by $|\cdot|$ the sup-norm for vectors in $\mathbb{R}^n$ without causing ambiguities. We formulate that in the limit process, $f_1(x) = \mathcal{O}^{\#}(f_2(x))$ means there are absolute positive constants $\ell_1$ and $\ell_2$ such that $\ell_1 f_2(x) \leqslant f_1(x) \leqslant \ell_2 f_2(x)$, and $f_1(x) = \mathcal{O}(f_2(x))$ (or $f_1(x) \lesssim f_2(x)$) implies that there exists an absolute positive constant $\ell_3$ such that $|f_1(x)| \leqslant \ell_3 f_2(x)$ (or $f_1(x) \leqslant \ell_3 f_2(x)$). Finally, $f_1(x) = o(f_2(x))$ is equivalent to $\lim f_1(x) / f_2(x) = 0$.

Let us start with the following assumptions:

(H1) [Internal Condition] For $ \Upsilon  \in \left(\omega(\mathcal{O})\right)^o$ given in advance, there exists $ \xi_0 \in \mathcal{O}^o$ such that $ \Upsilon  = \omega \left( {{\xi _0}} \right) $ admits Diophantine nonresonance, which is defined as:
\begin{equation}\label{FPKAMDIO}
	\left| {\left\langle {k,\omega \left( {{\xi _0}} \right)} \right\rangle } \right| \geqslant \gamma {\left| k \right|^{ - \tau }},\;\;\gamma  > 0,\;\;\tau>\max\{n-1,1\},\;\;\forall 0 \ne k \in {\mathbb{Z}^n}.
\end{equation}

(H2) [Relative Singularity Condition] There exists a neighborhood $ \mathcal{V} \subset \mathcal{O} $ of $ \xi_0 $, such that the following holds (allowing the supremum to be continuously supplemented according to the sup-limit)
\[\mathop {\sup }\limits_{\xi  \ne \zeta ,\xi ,\zeta  \in \mathcal{V}} \frac{{\mathop {\sup }\limits_{y,x \in G \times {\mathbb{T}^n}} \left| {P\left( {y,x,\xi } \right) - P\left( {y,x,\zeta } \right)} \right|}}{{\left| {\omega \left( \xi  \right) - \omega \left( \zeta  \right)} \right|}} <+\infty. \]

(H3) [Controllability Condition] Assume that $ \varphi \left( \delta  \right): = \sup\left\{ { \left| {\xi  - \zeta } \right|:\left| {\omega \left( \xi  \right) - \omega \left( \zeta  \right)} \right| \leqslant \delta } \right\} $ is continuously defined on $ [0, \tau'] $ with some $ 0<\tau'\leqslant1/2 $. Moreover, it satisfies the conditions $ \varphi(0)=0 $ and $ \varphi(x)>0 $ for $ x \in  (0, \tau'] $. Additionally, $ \varphi $ has certain integrability properties, specifically,
\[ - \int_0^{{\tau'}} {\frac{{\varphi (x)}}{{x\ln x}}dx}  <  + \infty .\]

In what follows, let us make some important remarks on (H2) and (H3), which involve a considerable weakening of the frequency mapping properties. These remarks are crucial, as they elucidate some of the \textit{main motivations} of this paper from various perspectives. For the convenience of readers, let us provide a summarizing overview here: Existing works all require the frequency mapping to be bi-Lipschitz or to have extremely poor regularity (such as being nowhere differentiable). However, these new assumptions we introduce can perfectly resolve this difficulty, \textit{allowing the frequency mapping to have explicit arbitrarily weak regularity and extremely strong degeneracy.} Experienced readers may skip these comments and proceed directly to our main result, Theorem \ref{FPKAMT1}.
\begin{itemize}
\item[(C1)]  In general, for perturbations $ \varepsilon P(y,x,\xi) $ that have Lipschitz continuity with respect to the parameter, it is always required that the frequency mapping $ \omega(\xi) $ is bidirectional Lipschitz continuous, e.g., see \cite{MR1370761,MR2837120,MR2842962,MR2819413}. This imposes stronger limitations on practical applications because such a frequency mapping $ \omega(\xi) $ is \textit{essentially locally linear}.	 As a result, the vast majority of existing work is unable to address situations beyond this particular case.


At the same time, the results given in \cite{DL,MR4806284,MR4731278,DTL} also cannot deal with the frequency mapping $ \omega(\xi) $ with \textit{good} regularity (or  \textit{degeneracy}, i.e., locally smoother than linearly), and sometimes $ \omega(\xi) $ has to be nowhere differentiable. As an example, for $ P(y,x,\xi):=\xi^3 $ in the case $ n=1 $, the introduced  modulus of continuity $ \varpi_1 (\sigma)$ of $ P $ corresponds to the Lipschitz modulus $ \sigma $, although it is smoother at $ 0 $. Then the (uniform) weak convex modulus $ \varpi_2(\sigma) $ of the frequency mapping $ \omega(\xi) $ must be at least $\sigma $, which yields almost surely linearity as we mentioned before. Unfortunately, if the perturbation has a weaker regularity with respect to $ \xi $, e.g., the H\"older type $ P(y,x,\xi):=|\xi|^{1/2} $, then the frequency mapping would be at least  nowhere differentiable. This is precisely why we previously emphasized that the limitations of uniformity make constructing a regularity case that is \textit{strictly weaker} than the  Lipschitz type particularly challenging. All the above can be explained technically \cite{MR4731278}, but it seems somewhat odd philosophically: if we can handle the bad cases, why cannot we handle the good ones? Addressing this question is also one of the main motivations of this paper.

	Fortunately, our new assumptions (H2) and (H3) will overcome these essential difficulties, allowing the frequency mapping to admit \textit{degeneracy}. Note that the translation of a mapping does not change its properties, so below we discuss only the asymptotic properties near $ 0 $.	\\(i) Consider the  case of $ n=1 $ with $ P(y,x,\xi):=\xi^3 $. Then $ \omega(\xi):=\xi^3 $ satisfies (H2) and (H3), and one observes that the inverse $ {\omega ^{ - 1}}\left( \xi  \right) = {\xi ^{1/3}} $ is not Lipschitz continuous, but instead admits H\"older continuity with optimal H\"older exponent $ 1/3 $. Therefore it is not a bidirectional Lipschitzian mapping.	\\(ii) Moreover, it is easy to see that (H2) \textit{never} requires the frequency mapping to be too bad (e.g., nowhere differentiable), as previous works \cite{DL,MR4806284,MR4731278,DTL} \textit{must require} in many cases.  It only requires that the mapping admits a relative singularity to the perturbation.  As illustrations, in the case of $ n=1 $ (the higher-dimensional case can be discussed in a similar manner), $ P(y,x,\xi):={\rm{sign}(\xi)}|\xi|^{1/2} $ with $ \omega(\xi):=\xi^{1/3} $; and even $ P\left( {y,x,\xi } \right): = {\left| \xi  \right|^\alpha } $ with $ \omega \left( \xi  \right): = {\rm{sign}(\xi)}{\left( { - \ln |\xi| } \right)^{ -\beta  }} $ (here $ \omega(0):=0 $) for all $\alpha,\beta>0 $. Leaving aside the singularities at $ 0 $, both of these frequency mappings are $ C^\infty $ (note that they are only defined on a subset  $ [0,\tau']\subset[0,1/2] $).

\item [(C2)] For example, the linear mapping $\omega(\xi) = a\xi + b \in \mathbb{R}^n$ admits $\varphi(\delta) = \mathcal{O}^{\#}(\delta)$ as $\delta \to 0^+$, whenever no component of $a$ is $0$ (we call this a non-degenerate linear mapping). However, if $a$ has one component equal to $0$ (the degenerate case), then such a continuous function $\varphi(\delta)$ satisfying $\varphi(0) = 0$ in (H3) does not exist. For instance, consider $\omega(\xi) = (0, \xi_2)$ in the case of $n = m = 2$. Here, $|\omega(\xi) - \omega(\zeta)| = |\xi_2 - \zeta_2| \leqslant \delta \ll 1$ does not imply that $|\xi - \zeta|$ is small, specifically $|\xi_1 - \zeta_1|$ could be large.
 As for a  general local (near $ \xi_0 $) injection $ \omega(\xi) $, one way to determine the order of $ \varphi(\delta) $ is to consider its local inverse $ \omega^{-1}(\xi) $. In this case, $\varphi(\delta)$ corresponds to the modulus of continuity of $\omega^{-1}(\xi)$, i.e., $ \varphi \left( \delta  \right) = {\sup _{\left| {\xi  - \zeta } \right| \leqslant \delta }}\left| {{\omega ^{ - 1}}\left( \xi  \right) - {\omega ^{ - 1}}\left( \zeta  \right)} \right| $. 
	
\item [(C3)] To illustrate the \textit{degeneracy} that we allow in the frequency mapping, we provide an explicit example here. In view of the integrability in (H3), a critical case using a mollifier in $ \mathbb{R}^1$ is 
\begin{equation}\notag
	\omega \left( \xi  \right) = \bar \omega  + \text{sign}\left( \xi  \right)\exp \left( { - {{\left| \xi  \right|}^{ - \alpha }}} \right)
\end{equation}
with $ \xi  \in \left[ { - 1,1} \right] $ and $ \alpha  > 0 $, where $ \bar \omega $ is Diophantine. Clearly, this function is  very smooth at $ 0 $, i.e.,  all derivatives vanish at $ 0 $. Consequently, $ \omega(\xi) $ can already be nearly horizontal at $ 0 $. See Figure \ref{fig:sub1} for its shape. 
 \begin{figure}[htbp]
	\centering
	
	\begin{subfigure}[b]{0.47\textwidth}
		\centering
		\includegraphics[width=\textwidth]{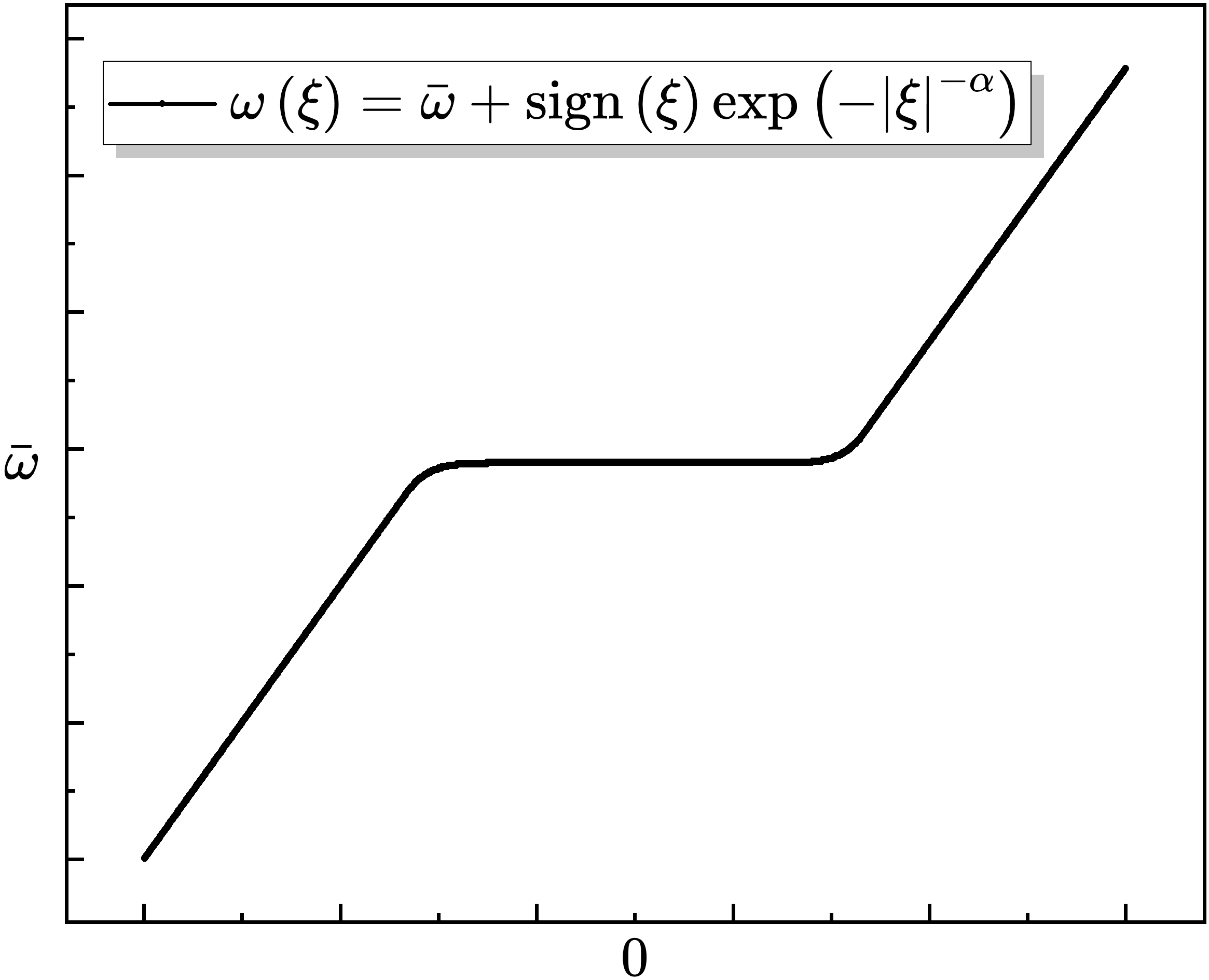}
		\caption{The shape of the frequency mapping $ \omega(\xi) $.}
		\label{fig:sub1}
	\end{subfigure}
	\hfill 
	\begin{subfigure}[b]{0.444\textwidth}
		\centering
		\includegraphics[width=\textwidth]{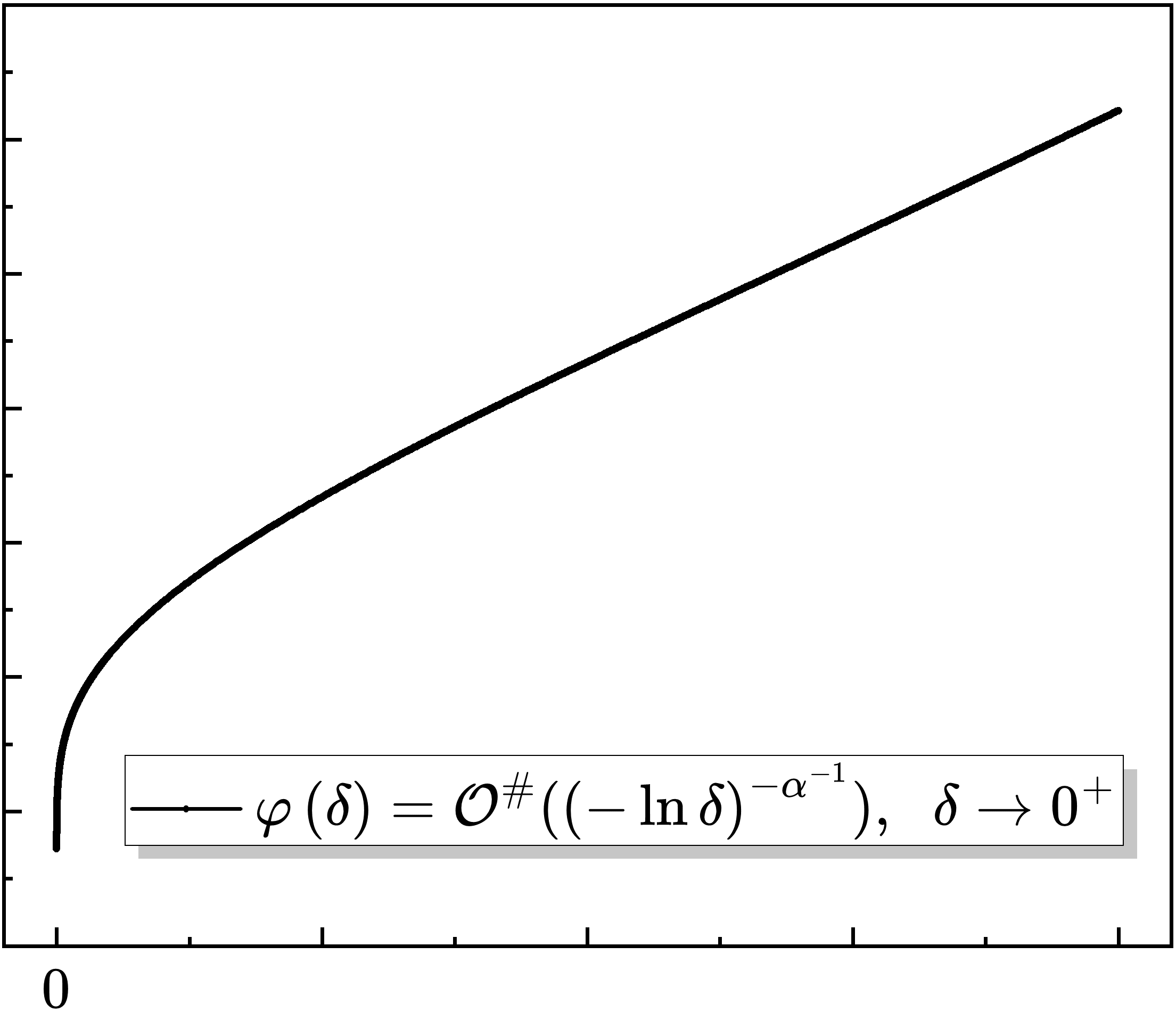}
		\caption{The shape of the function $\varphi(\delta) $.}
		\label{fig:sub2}
	\end{subfigure}
	\caption{A degenerate and  critical case in (H3) using a  mollifier}
	\label{FIGWBA5}
\end{figure}
 This is one of the degenerate cases we referred to in (C1). One can verify that $ \varphi \left( \delta  \right) =\mathcal{O}^\#({\left( { - \ln \delta } \right)^{ - {\alpha ^{ - 1}}}})  $ as $ \delta \to0^+ $, see Figure \ref{fig:sub2} for its shape. Thus, (H3) is satisfied due to
\begin{equation}\label{FPKAMXING}
	- \int_0^{{\tau'}} {\frac{{\varphi (x)}}{{x\ln x}}dx}  =\mathcal{O}^\#\left(\int_0^{{\tau'}} {\frac{1}{{x{{\left( { - \ln x} \right)}^{1 + {\alpha ^{ - 1}}}}}}dx}\right)   =\mathcal{O}^\#\left({\int_1^{ + \infty } {\frac{1}{{{t^{1 + {\alpha ^{ - 1}}}}}}dt} }\right)   <  + \infty ,
\end{equation}
whenever $ 0<\tau' \leqslant 1/2 $. The criticality is mainly reflected in the convergence rate of the integrand in \eqref{FPKAMXING}. In fact, one can construct many more critical cases using the same approach.

\item [(C4)] As previously shown, (H3) is mainly proposed to accommodate situations where the frequency mapping is too smooth. When perturbations under consideration have weak regularity with respect to the parameter, such as at most Lipschitz continuous or even just H\"older continuous, then (H3) automatically holds according to (H2). In other words, the corresponding KAM theorem \textit{does not} require (H3) in this case (see \cite{MR4731278,DL,DTL,MR4806284} for similar cases, although the conditions required there are much stronger than those in this paper), e.g., Corollary \ref{FPKAMCORO1} concerning H\"older regularity (or Theorem \ref{th23} in the infinite-dimensional case).
\end{itemize}

Now, our main KAM results in this paper, namely frequency-preserving KAM in the parameterized settings, are stated as follows.

\begin{theorem}[Frequency-preserving KAM]\label{FPKAMT1}
Assume (H1), (H2) and (H3). Then there exists a sufficiently small $ {\varepsilon _0} > 0 $ such that, for any $ 0 < \varepsilon  < {\varepsilon _0} $,  one can find some $ {\xi^* }  $ near $ \xi_0 $ such that the	perturbed Hamiltonian system $ H\left( {y,x,{\xi^* },\varepsilon } \right) $ with parameter $ \xi^* $ in \eqref{FPKAMHamilton} admits an analytic quasi-periodic invariant torus with toral frequency $ \Upsilon=\omega \left( {{\xi _0}} \right) $. In particular, $  {\xi ^ * } = {\xi _0} +\mathcal{O}\left(\varphi \left( \varepsilon  \right) { - \int_0^{\varepsilon } {\frac{{\varphi (x)}}{{x\ln x}}dx} } \right) $ as $ \varepsilon \to 0^+ $.

\setcounter{footnote}{0}
\renewcommand{\thefootnote}{\fnsymbol{footnote}}
 Moreover,  there exists a continuous family of  $\xi^*(\hat\xi_0)$ with respect to $\hat\xi_0\in \mathscr{V}$, where $ \vartheta:=\min_{\xi\in\partial \mathcal{V}}\left\{|\omega(\xi)-\omega(\xi_0)|\right\} $, and 
\[\mathscr{V}=\left\{\hat\xi_0\in \mathcal{V}:|\omega(\hat \xi_0)-\omega(\xi_0)|<\vartheta,\;|\langle k,\omega(\hat\xi_0)\rangle|>{\gamma} |k|^{-\tau},\;\forall 0 \ne k \in {\mathbb{Z}^n}\right\},\]
such that the perturbed Hamiltonian system $H(y,x,\xi^*(\hat\xi_0),\varepsilon)$ admits an analytic quasi-periodic invariant torus with toral frequency $\omega(\hat\xi_0)$, and $\xi^*(\hat\xi_0)\rightarrow\hat\xi_0$ as $\varepsilon\to 0^+$. In particular, if $ \omega(\xi) $ is injective, then the continuity of $\xi^*(\hat\xi_0)$ could be of $ \varpi $-modulus of continuity's type, where $ \varpi $ is the weaker \footnote{For any two modulus of continuity $ \varpi_A $ and $ \varpi_B $, we say that $ \varpi_A $ is weaker than $ \varpi_B $, if $ \mathop {\overline {\lim } }\limits_{x \to 0^+ } {\varpi _A (x)}/{\varpi _B (x)} <  + \infty  $. See \cite{MR4731278,CCM} for further explicit 	illustrations.} one of the modulus of continuity of $ \omega(\xi) $ and its inverse $ \omega^{-1}(\xi) $.
\end{theorem}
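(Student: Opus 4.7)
The plan is to construct $\xi^{*}$ as the limit of a quasi-Newtonian KAM sequence with parameter translation at every step. Set up the standard analytic KAM machinery: decreasing analyticity widths $r_{s},s_{s}$, a super-exponential error sequence $\varepsilon_{s+1}=\varepsilon_{s}^{\rho}$ for some $\rho\in(1,2)$, and a sequence of symplectic transformations $\Phi_{s}$ whose composition conjugates the original Hamiltonian to a KAM normal form. The novel ingredient is that at each step we also update the parameter from $\xi_{s}$ to $\xi_{s+1}$ so that the integrable frequency after conjugation is exactly $\Upsilon=\omega(\xi_{0})$, rather than allowing the frequency to drift. The base step starts at $\xi_{1}=\xi_{0}$, where (H1) provides the Diophantine estimates needed for the small divisors throughout, since $\xi_{s}\to\xi^{*}$ will stay inside the Diophantine regime on $\mathcal{V}$ thanks to the control given by (H3).

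For the inductive step, absorb $(\omega(\xi_{s})-\Upsilon)\cdot y$ into the perturbation, solve the usual homological equation
\[
\partial_{\Upsilon}F_{s}=R_{s}(x,\xi_{s}),\qquad R_{s}:=\bigl[\varepsilon_{s}(\widetilde{P}_{s}-[\widetilde{P}_{s}])\bigr]_{\text{truncated in }y}
\]
by Fourier expansion, and define $\Phi_{s}=\exp X_{F_{s}}$. After conjugation the new integrable frequency is $\omega(\xi_{s})+\varepsilon_{s}\,\bar{P}_{s,1}(\xi_{s})$ up to quadratically small terms, where $\bar{P}_{s,1}$ is the mean of $\partial_{y}P_{s}|_{y=0}$. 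To restore the target frequency we need $\xi_{s+1}$ with $\omega(\xi_{s+1})=\Upsilon-\varepsilon_{s}\bar{P}_{s+1,1}(\xi_{s+1})$, solved by a Schauder fixed-point argument inside $\mathcal{V}$ (Brouwer suffices because $\omega$ is only continuous); (H2) plays a key role by bounding the change in the perturbation under parameter translation via $|P(\cdot,\xi_{s+1})-P(\cdot,\xi_{s})|\leqslant M|\omega(\xi_{s+1})-\omega(\xi_{s})|=\mathcal{O}(\varepsilon_{s})$, which is compatible with the super-exponential convergence. The controllability condition (H3) yields the quantitative bound $|\xi_{s+1}-\xi_{s}|\leqslant\varphi(C\varepsilon_{s})$.

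The hard part will be showing that $\sum_{s}\varphi(C\varepsilon_{s})<+\infty$, for which the integrability hypothesis in (H3) is tailor-made. With $\varepsilon_{s}=\varepsilon^{\rho^{s}}$ one has $\ln\varepsilon_{s}=\rho^{s}\ln\varepsilon$, so $\ln|\ln\varepsilon_{s+1}|-\ln|\ln\varepsilon_{s}|=\ln\rho$; comparing the series with a Riemann sum and substituting $x=\varepsilon_{s}$ gives
\[
\sum_{s\geqslant 1}\varphi(C\varepsilon_{s})\;\lesssim\;\varphi(\varepsilon)+\int_{0}^{\varepsilon}\varphi(x)\,d\bigl(\ln|\ln x|\bigr)\;=\;\varphi(\varepsilon)-\int_{0}^{\varepsilon}\frac{\varphi(x)}{x\ln x}\,dx,
\]
which is finite by (H3) and produces the announced estimate on $\xi^{*}-\xi_{0}$. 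The limit transformation $\Phi_{\infty}=\lim\Phi_{1}\circ\cdots\circ\Phi_{s}$ converges on a shrunken strip by the usual Cauchy telescoping, and parameter convergence $\xi_{s}\to\xi^{*}$ together with continuity of $\omega$ ensures the integrable frequency at $\xi^{*}$ is exactly $\Upsilon$; the invariant torus is then $\Phi_{\infty}(\{0\}\times\mathbb{T}^{n})$.

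For the continuous family, replace the target $\Upsilon$ by $\omega(\hat\xi_{0})$ for $\hat\xi_{0}\in\mathscr{V}$: the definition of $\mathscr{V}$ guarantees both that $\omega(\hat\xi_{0})$ stays Diophantine (with the same $\gamma,\tau$ up to a harmless factor, by choosing $\vartheta$ small enough) and that the fixed-point arguments can be carried out in a subset of $\mathcal{V}$ not touching $\partial\mathcal{V}$. Since every constructive ingredient of the iteration (the homological solution, the Brouwer selection of $\xi_{s+1}$, the dependence of $\Phi_{s}$ on data) is continuous in the target frequency, the composition $\hat\xi_{0}\mapsto\xi^{*}(\hat\xi_{0})$ is continuous, and $\xi^{*}(\hat\xi_{0})\to\hat\xi_{0}$ as $\varepsilon\to 0^{+}$ follows from the quantitative estimate applied with base point $\hat\xi_{0}$. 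When $\omega$ is injective, tracking the estimate $|\xi^{*}(\hat\xi_{0})-\xi^{*}(\hat\xi_{0}')|$ reduces, via (H2), (H3) and telescoping, to comparing $|\omega(\hat\xi_{0})-\omega(\hat\xi_{0}')|$ and $|\hat\xi_{0}-\hat\xi_{0}'|$; the modulus of continuity obtained is necessarily the weaker of that of $\omega$ and of $\omega^{-1}$, as claimed.
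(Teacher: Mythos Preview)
Your proposal follows the same overall architecture as the paper: a quasi-linear KAM scheme with parameter translation at each step, using (H1) to solve the frequency-preserving equation via a topological (Brouwer-type) argument, (H2) to propagate the $\omega$-weighted semi-norm through the iteration, and (H3) to convert $|\omega(\xi_{s+1})-\omega(\xi_s)|=\mathcal{O}(\varepsilon_s)$ into $|\xi_{s+1}-\xi_s|\leqslant\varphi(C\varepsilon_s)$, with the integral condition in (H3) giving summability exactly through the Riemann-sum comparison you describe. The paper organizes the frequency equation cumulatively as $\omega(\xi_{\nu+1})+\sum_{j=0}^{\nu}p_{01}^{j}(\xi_{\nu+1})=\omega(\xi_0)$ rather than incrementally, but this is bookkeeping.

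There is one genuine gap in the continuous-family part. You argue that $\hat\xi_0\mapsto\xi^*(\hat\xi_0)$ is continuous because ``every constructive ingredient \ldots\ the Brouwer selection of $\xi_{s+1}$ \ldots\ is continuous in the target frequency.'' Brouwer fixed points are \emph{not} in general continuously selectable, so this step does not stand as written. The paper avoids this by working directly with the limit equation: subtracting the two relations $\omega(\xi_\infty(\hat\xi_0))+\sum_j p_{01}^j(\xi_\infty(\hat\xi_0))=\omega(\hat\xi_0)$ and the analogue for $\tilde\xi_0$, then using $[p_{01}^j]_\omega\leqslant c\mu_j$ with $c\sum_j\mu_j<\tfrac12$ to obtain $|\omega(\xi_\infty(\hat\xi_0))-\omega(\xi_\infty(\tilde\xi_0))|\leqslant 2|\omega(\hat\xi_0)-\omega(\tilde\xi_0)|$, and finally (H3) gives $|\xi_\infty(\hat\xi_0)-\xi_\infty(\tilde\xi_0)|\leqslant\varphi(2|\omega(\hat\xi_0)-\omega(\tilde\xi_0)|)$. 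Note this argument, applied with $\hat\xi_0=\tilde\xi_0$, also shows the limit $\xi_\infty$ is \emph{unique}, which retroactively justifies treating $\xi^*(\hat\xi_0)$ as a well-defined function; you could patch your argument this way, but the limit-equation route is cleaner and is what you should use.
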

\begin{remark}
In fact, beyond the neighborhood $ \mathcal{V} $ of $ \xi_0 $ given in (H1), the regularity of the frequency mapping and perturbations with respect to the parameter $ \xi $ can be arbitrarily weak, such as nowhere differentiability, nowhere H\"older continuity, or even nowhere continuity. While in $ \mathcal{V} $, they could be merely continuous.
\end{remark}
\begin{remark}
The KAM iteration belongs to the Picard scheme, which, unlike the Tonelli scheme, is incapable of extracting a convergent subsequence. \cite{DL} provided a counterexample   illustrating this, specifically demonstrating that the sequence can exhibit infinite oscillations. However, our parameter translation technique based on assumptions (H1), (H2), and (H3) can avoid this pitfall, resulting in a uniformly convergent sequence of frequency-preserving parameters, and their asymptotic behavior can be  estimated explicitly according to (H3).
\end{remark}

Below, we provide some explicit applications of Theorem \ref{FPKAMT1}.

\begin{corollary}[H\"older continuity]\label{FPKAMCORO1}
 Assume, in addition to (H1) and (H2), that the  perturbation $ P $ is $ \alpha $-H\"older near $ \xi_0 $ with some $ 0<\alpha<1 $, and is injective with respect to $ \xi $ near $ \xi_0 $ for fixed $ y,x $. Then there exists a sufficiently small $ {\varepsilon _0} > 0 $ such that, for any $ 0 < \varepsilon  < {\varepsilon _0} $,  one can find some $ {\xi^* }  $ near $ \xi_0 $ such that the	perturbed Hamiltonian system $ H\left( {y,x,{\xi^* },\varepsilon } \right) $ with parameter $ \xi^* $ in \eqref{FPKAMHamilton} admits an analytic quasi-periodic invariant torus with toral frequency $ \Upsilon=\omega \left( {{\xi _0}} \right) $. In particular, $ {\xi ^ * } = {\xi _0} +\mathcal{O}\left(\varphi \left( \varepsilon  \right) { - \int_0^{\varepsilon } {\frac{{\varphi (x)}}{{x\ln x}}dx} } \right) $ as $ \varepsilon \to 0^+ $.
\end{corollary}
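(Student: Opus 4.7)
The plan is to deduce Corollary \ref{FPKAMCORO1} from Theorem \ref{FPKAMT1} by verifying that assumption (H3) is automatic under the additional H\"older continuity and injectivity of $P$; once this is done, the asymptotic expression for $\xi^*$ is inherited verbatim from the theorem, and the KAM machinery need not be re-run.

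First I would use (H2) together with the injectivity of $\xi\mapsto P(y,x,\xi)$ to establish that $\omega$ is locally injective on the neighborhood $\mathcal{V}$: indeed, $\omega(\xi)=\omega(\zeta)$ forces $\sup_{y,x}|P(y,x,\xi)-P(y,x,\zeta)|=0$ via (H2), hence $P(y,x,\xi)=P(y,x,\zeta)$ for all $(y,x)\in G\times\mathbb{T}^n$, and injectivity then gives $\xi=\zeta$. Thus $\varphi(\delta)$ is well defined near $0$ with $\varphi(0)=0$ and $\varphi(\delta)>0$ for $\delta>0$. Next, using the uniform $\alpha$-H\"older regularity of $P$ across $G\times\mathbb{T}^n$ together with a compactness argument on $\overline{\mathcal{V}}$, I would extract a quantitative lower bound of the form $\sup_{y,x}|P(y,x,\xi)-P(y,x,\zeta)|\gtrsim |\xi-\zeta|^{\beta}$ for some finite exponent $\beta$. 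Coupling this bound with (H2) yields $|\omega(\xi)-\omega(\zeta)|\gtrsim |\xi-\zeta|^{\beta}$, hence a power-type modulus $\varphi(\delta)\lesssim \delta^{1/\beta}$ for $\delta$ small. The integrability condition in (H3) is then immediate:
\[
-\int_0^{\tau'}\frac{\varphi(x)}{x\ln x}\,dx\;\lesssim\;\int_0^{\tau'}\frac{x^{1/\beta-1}}{|\ln x|}\,dx<+\infty,
\]
since $1/|\ln x|$ is bounded near $0$ and $x^{1/\beta-1}$ is integrable there. All three hypotheses (H1), (H2), (H3) being now in force, Theorem \ref{FPKAMT1} delivers the desired frequency-preserving invariant torus with the stated asymptotics.

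The hard part will be the extraction of the quantitative lower bound on $\sup_{y,x}|P(y,x,\xi)-P(y,x,\zeta)|$: qualitative injectivity alone cannot furnish a modulus, and one must use the uniform H\"older structure of $P$ in $\xi$ to exclude pathological flat behavior. Once this lower bound is secured, (H2) effectively transfers the modulus from the family $\{P(y,x,\cdot)\}$ to $\omega^{-1}$, and the integrability in (H3) follows essentially for free from the power nature of $\varphi$; the remainder of the argument is a direct invocation of Theorem \ref{FPKAMT1} with this explicit $\varphi$ substituted into the conclusion.
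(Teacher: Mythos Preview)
Your plan coincides with the paper's: verify (H3) by establishing a power bound on $\varphi$, then invoke Theorem~\ref{FPKAMT1}. The paper's proof is terse: it simply asserts $\varphi(\delta)=\mathcal{O}(\delta^{1/\alpha})$ ``without loss of generality'' from (H2) and the H\"older hypothesis, then checks integrability by essentially the same computation you give.

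However, the step you identify as ``the hard part''---extracting a lower bound $\sup_{y,x}|P(y,x,\xi)-P(y,x,\zeta)|\gtrsim|\xi-\zeta|^{\beta}$ via compactness and the uniform H\"older structure---cannot succeed as described. H\"older continuity is an \emph{upper} bound on oscillation and carries no information about how slowly an injective map may separate distinct points; for instance $\xi\mapsto\mathrm{sign}(\xi)\exp(-\xi^{-2})$ is $C^{\infty}$, hence $\alpha$-H\"older for every $\alpha\in(0,1)$, and injective, yet admits no power-type lower bound near $\xi_{0}=0$. Compactness of $\overline{\mathcal{V}}$ does not rescue this, since the obstruction is a local flat-spot phenomenon, not a failure of uniformity. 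The paper's own proof does not supply this step either---it is absorbed into the phrase ``without loss of generality''---so your proposal and the paper's argument share the same lacuna; you have made it explicit rather than resolved it.
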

\begin{remark}\label{FPKAMRE1.2}
	One could obtain more general modulus of continuity cases using the notations  introduced in \cite{MR0538680,DTL,MR4731278,MR4806284,CCM}. As a consequence, we can handle explicit cases where perturbations only admit continuity (beyond H\"older) with respect to the parameter, e.g., the Logarithmic H\"older type
	\begin{align*}
	H\left( {y,x,\xi ,\varepsilon } \right) = &\left\langle {\omega \left( {{\xi _0}} \right),y} \right\rangle  + \sum\limits_{i = 1}^n {{\rm{sign}}\left( {{\xi _i}} \right){{\left( {\ln \left( { - \ln \left| {{\xi _i}} \right|} \right)} \right)}^{ - \beta }}{y_i}} \\
	&  + \varepsilon \sum\limits_{i = 1}^n {{{\left( { - \ln \left| {{\xi _i}} \right|} \right)}^{ - \gamma }}\left( {\sin {x_{n-i}} +(\sin {y_i})^2} \right)} ,
	\end{align*}
where $\beta,\gamma>0 $, and $ \xi \in \mathcal{O}:=[-1/4,1/4]^n $. Here we supplement the definitions at $ \xi_i=0 $ in terms of limits. 
\end{remark}

\begin{corollary}\label{FPKAMCORO2}
	Assume that the Hamiltonian system in \eqref{FPKAMHamilton} admits a non-degenerate linear frequency mapping. Then almost all (in a full Lebesgue measure sense) frequencies in  $ \omega(\mathcal{O}) $ can be preserved in KAM by selecting  appropriate parameters in $ \mathcal{O}^o $, whenever $ \varepsilon $ is sufficiently small and the perturbation $ P $ is Lipschitz continuous with respect to the parameter.
\end{corollary}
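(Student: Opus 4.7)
My plan is to show that Corollary \ref{FPKAMCORO2} follows directly from Theorem \ref{FPKAMT1} once the three hypotheses (H1)--(H3) have been checked: the linearity and non-degeneracy of $\omega$ make (H2) and (H3) automatic everywhere, while (H1) holds for a full-measure set of base points in $\mathcal{O}^o$ by the classical measure estimate for Diophantine vectors.

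First I would verify (H2) and (H3) uniformly in $\mathcal{O}$. Write $\omega(\xi)=A\xi+b$ with $A$ invertible (this is the content of ``non-degenerate'' as hinted in (C2), where a vanishing component exactly destroys the required continuity of $\varphi$). Then $\omega^{-1}$ is Lipschitz with constant $\|A^{-1}\|$, so for any $\xi\neq\zeta$ in $\mathcal{O}$ and any $(y,x)$,
$$\frac{|P(y,x,\xi)-P(y,x,\zeta)|}{|\omega(\xi)-\omega(\zeta)|}\leqslant\frac{L_P\,|\xi-\zeta|}{\|A^{-1}\|^{-1}|\xi-\zeta|}=L_P\,\|A^{-1}\|<+\infty,$$
where $L_P$ is the Lipschitz constant of $P$ in $\xi$. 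Hence (H2) holds on any neighborhood $\mathcal{V}\subset\mathcal{O}$. Moreover, $\varphi(\delta)=\mathcal{O}^{\#}(\delta)$ by the Lipschitz inverse, so
$$-\int_0^{\tau'}\frac{\varphi(x)}{x\ln x}\,dx=\mathcal{O}\!\left(-\int_0^{\tau'}\frac{dx}{\ln x}\right)<+\infty,$$
since $-1/\ln x$ is bounded on $(0,\tau']$ for $\tau'\leqslant1/2$. So (H3) holds as well, irrespective of the eventual choice of $\xi_0$.

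Next I would treat (H1) by a standard measure-theoretic argument. For fixed $\tau>\max\{n-1,1\}$, the set of $\Upsilon\in\omega(\mathcal{O})^o$ failing the Diophantine inequality $|\langle k,\Upsilon\rangle|\geqslant\gamma|k|^{-\tau}$ for some $0\neq k\in\mathbb{Z}^n$ has $n$-dimensional Lebesgue measure $\mathcal{O}(\gamma)$, obtained by summing the standard resonant-strip estimates $\mathcal{O}(\gamma|k|^{-\tau-1})$ over $k$. Letting $\gamma\to0^+$ yields a full-measure subset of $\omega(\mathcal{O})^o$ of Diophantine frequencies. Pulling back through the bi-Lipschitz bijection $\omega$ produces a full-measure subset of base points $\xi_0\in\mathcal{O}^o$ for which $\Upsilon=\omega(\xi_0)$ is Diophantine with some $\gamma>0$, i.e.\ (H1) is satisfied.

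Finally, for every such $\xi_0$, Theorem \ref{FPKAMT1} immediately supplies an appropriate parameter $\xi^*\in\mathcal{O}^o$ close to $\xi_0$ at which the perturbed Hamiltonian \eqref{FPKAMHamilton} admits an analytic invariant torus with prescribed frequency $\Upsilon=\omega(\xi_0)$; this is exactly the conclusion of the corollary. The only slightly nontrivial input is the classical Diophantine measure estimate, which is not novel to the present paper, so I do not anticipate a genuine obstacle; the rest is a clean verification of the hypotheses of Theorem \ref{FPKAMT1}.
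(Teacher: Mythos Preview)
Your proposal is correct and follows essentially the same route as the paper: verify (H2) and (H3) from the bi-Lipschitz nature of a non-degenerate linear $\omega$ together with the Lipschitz regularity of $P$ (the paper simply cites Comment (C2) for $\varphi(\delta)=\mathcal{O}^{\#}(\delta)$), note that almost every frequency in $(\omega(\mathcal{O}))^o$ is Diophantine so (H1) holds for a full-measure set of $\xi_0\in\mathcal{O}^o$, and then invoke Theorem~\ref{FPKAMT1}. Your write-up is just a more explicit rendition of the paper's two-sentence argument.
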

\begin{remark}
	By utilizing the Mean Value Theorem,  this corollary can  be generalized to the case where  $ \det \left( {D\omega \left( \xi  \right)} \right) \ne 0 $ on $  \mathcal{O} $.
\end{remark}

The remainder of the paper is organized as follows. Section \ref{FPKAMCONTEREXAMPLES} provides a number of counterexamples that help us understand the frequency-preserving KAM in detail. Among those, we show that the Internal Condition (H1), the Relative Singularity Condition (H2) and the Controllability Condition (H3) are indispensable in the sense of frequency-preserving. We also introduce a new idea (Theorem \ref{FPKAMT2}) to adjust the Relative Singularity Condition (H2) by utilizing extra parameters. This approach is somewhat similar to bifurcation theory, from a philosophical standpoint. In addition, we demonstrate the further applicability of our results to the partial frequency-preserving KAM theory (Theorem \ref{PFPKAM}) and the infinite-dimensional KAM theory \textit{without any spectral asymptotics} (Theorem \ref{th23}), among others. These contributions are also entirely new to this paper. In conclusion, the approach of this paper, based on relative singularity and controllability, is indeed a \textit{generic} one, allowing us to apply it to an extremely broad range of problems, especially \textit{explicit} systems with \textit{arbitrarily weak regularity}\footnote{To our knowledge, this is the first rigorous approach to achieve this.} and \textit{degeneracy}. Finally, the proofs of Theorem \ref{FPKAMT1} and Corollaries \ref{FPKAMCORO1} and \ref{FPKAMCORO2} are given in Sections \ref{FPKAMsec-4} and \ref{FPKAMsec-5}. These proofs follow a modified parameter translation technique and a modified quasi-linear (quasi-Newtonian) type KAM iteration. They also strongly rely on the Relative Singularity Condition (H2) and the Controllability Condition (H3), which we initially introduced.
\section{Counterexamples, parallel applicability and further comments}\label{FPKAMCONTEREXAMPLES}
This section is mainly divided into two parts. The first part includes Sections \ref{FPKAMSEC2.1} and \ref{FPKAMSEC2.2}, and provides several counterexamples based on different perturbations, aiming to illustrate the frequency-preserving KAM almost completely. In particular, we show that our assumptions, namely the Internal Condition (H1), the Relative Singularity Condition (H2) and the Controllability Condition (H3), are indeed indispensable in the sense of frequency-preserving. The second part involves Sections \ref{FPKAMLESS}, \ref{FPKAMPAR} and \ref{FPKAMINFINITE}, and extends our KAM results to the partial frequency-preserving KAM, the infinite-dimensional KAM and etc. 

\subsection{Counterexamples via perturbations in Theorem \ref{FPKAMT1}}\label{FPKAMSEC2.1}
Consider a specific analytic  Hamiltonian system \eqref{FPKAMHamilton} in the case $ n=m=2 $:  
\begin{equation}\label{FPKAMCE1}
	H\left( {y,\xi,\varepsilon } \right) = \left\langle {\omega \left( \xi  \right),y} \right\rangle  + \varepsilon P\left( {y,\xi  } \right),
\end{equation}
where $ 0<\varepsilon\ll 1$, and the frequency mapping $ \omega \left( \xi  \right) = \left( {{{\bar \omega }_1} + \tilde \omega_1 \left( {{\xi }} \right),{{\bar \omega }_2} + \tilde \omega_2 \left( {{\xi }} \right)} \right) $ with $ \left( {{{\bar \omega }_1},{{\bar \omega }_2}} \right) $ being Diophantine. Note that according to the symplectic structure, $ \dot x (t,\xi,\varepsilon) ={H_y}\left( {y,\xi ,\varepsilon} \right) $ and $ \dot y (t,\xi,\varepsilon) =-{H_x}\left( {y,\xi ,\varepsilon} \right) =0$.
Then the unperturbed torus for fixed $ y=y_0 $ can be written as $ x(t,\xi,0) = \omega \left( \xi  \right)t + x_0 $, associated with initial value $ y_0$ and $x_0 $, and the perturbed one for fixed $ y=y_0 $ is $ x(t,\xi,\varepsilon) = \left( {\omega \left( \xi  \right) + \varepsilon {\partial _y}P\left( {y,\xi } \right)} \right)t + x_0 $.  Therefore, to obtain a $ \left( {{{\bar \omega }_1},{{\bar \omega }_2}} \right) $-frequency-preserving KAM torus with parameter $ {\xi ^ * } = \left( {\xi _1^ * ,\xi _2^ * } \right) \in \mathcal{O} $ (the parameter set $ \mathcal{O} \subset \mathbb{R}^2 $ will be specified later), the following must be satisfied:
\begin{equation}\label{FPKAMlianli}
	\left\{ \begin{gathered}
		\tilde \omega_1 \left( {\xi ^ * } \right) + \varepsilon {\partial _{y_1}}P\left( {y,{\xi ^ * }} \right) = 0, \hfill \\
		\tilde \omega_2 \left( {\xi ^ * } \right) + \varepsilon {{\partial _{y_2}}P}\left( {y,{\xi ^ * }} \right) = 0. \hfill \\ 
	\end{gathered}  \right.
\end{equation}

\subsubsection{Non-uniqueness of the parameter corresponding to frequency-preserving}
The parameter corresponding to frequency-preserving KAM torus might not be unique, in other words, there could be many KAM tori with the prescribed frequency.

Let $ \mathcal{O}:=[-1,1]\times[0,3\pi] $, and set $ {{\tilde \omega }_1}\left( \xi  \right) = {\xi _1} $, $ {{\tilde \omega }_2}\left( \xi  \right) = \sin {\xi _2} $, and $ P\left( {y,\xi } \right) =  - {y_2} $. Note that $ \xi_0 $ could be non-unique, i.e., $ {\xi _0} = (0,\pi) $ or $(0,2\pi)  $. Near these two points, (H1) to (H3) automatically hold (with $ \varphi(\delta)=\mathcal{O}^\#(\delta) $). This is because (H2) is a local assumption, it does not require the frequency mapping $ \omega(\xi) $ to be a global injection. Now, for $0< \varepsilon\ll 1 $, the frequency-preserving equations in  \eqref{FPKAMlianli} give 
\[\left\{ \begin{gathered}
	\xi _1^ *  = 0, \hfill \\
	\sin {\xi _2^*} - \varepsilon  = 0, \hfill \\ 
\end{gathered}  \right. \Rightarrow \left\{ \begin{gathered}
	\xi _1^ *  = 0, \hfill \\
	\xi _2^ *  = \arcsin \varepsilon ,\pi  - \arcsin \varepsilon ,2\pi  + \arcsin \varepsilon ,3\pi  - \arcsin \varepsilon , \hfill \\ 
\end{gathered}  \right.\]
which shows the non-uniqueness of the parameter corresponding to frequency-preserving. It should be mentioned that $ \xi _2^ *  = \arcsin \varepsilon  $ and $ \xi _2^ *  = 3\pi-\arcsin \varepsilon $ are not derived from our KAM theorem (Theorem \ref{FPKAMT1}) because they are both at a positive distance (independent of $ \varepsilon>0 $) from their respective $ \xi_0 $, i.e.,
\[\mathop {\lim }\limits_{\varepsilon  \to {0^ + }} \left| {\left( {0,\arcsin \varepsilon } \right) - \left( {0,\pi } \right)} \right| = \mathop {\lim }\limits_{\varepsilon  \to {0^ + }} \left| {\left( {0,3\pi  - \arcsin \varepsilon } \right) - \left( {0,2\pi } \right)} \right| = \pi  \ne 0.\]
 But $ \xi _2^ *  = \pi-\arcsin \varepsilon  $ and $ \xi _2^ *  = 2\pi+\arcsin \varepsilon  $ are compatible with our KAM theorem  by verifying (H1) to (H3), respectively.

The previous linear counterexample concerns the case that $ \xi_0 $ could be selected differently. Next, we give a nonlinear Hamiltonian system that admits infinitely (uncountable) many frequency-preserving KAM tori, and $ \xi_0 $ must be unique at this time. Let $ \mathcal{O}:=[-1,1]^2 $, and set $ {{\tilde \omega }_1}\left( \xi  \right) = {\xi _1} $, $ {{\tilde \omega }_2}\left( \xi  \right) = {\xi _2} $, and $ P\left( {y,\xi } \right) =  - \left( {y_1^2/2 + {y_2} + y_2^2/2} \right) $. Then (H1) to (H3) automatically hold (with $ \varphi(\delta)=\mathcal{O}^\#(\delta) $) near $ \xi_0 =(0,0)$, and the equations in \eqref{FPKAMlianli} turn to
\[\left\{ \begin{gathered}
	\xi _1^ *-\varepsilon y_1  = 0, \hfill \\
	{\xi _2^*} - \varepsilon (1+y_2) = 0, \hfill \\ 
\end{gathered}  \right. \Rightarrow \left\{ \begin{gathered}
	\xi _1^ *  =\varepsilon y_1, \hfill \\
	\xi _2^ *  = \varepsilon (1+y_2) . \hfill \\ 
\end{gathered}  \right.\]
If $ y \in G:=[-1,1]^2 $, then $ \xi^* $ could be infinitely many, i.e., $ {\xi ^ * } = \left( {\varepsilon {y_1},\varepsilon \left( {1 + {y_2}} \right)} \right) $ with any initial value $ y = \left( {{y_1},{y_2}} \right) \in G $ fixed (recall that $ \dot y =  - {H_x}\left( {y,\xi ,\varepsilon } \right) = 0 $). In other words, on any level set near the origin, we can adjust the parameter such that the prescribed frequency is preserved. Interestingly, if this counterexample takes no parameters, i.e., $ {{\tilde \omega }_1}\left( \xi  \right)= {{\tilde \omega }_2}\left( \xi  \right) = 0 $, then the $ (\bar \omega_1, \bar \omega_2) $-frequency-preserving KAM  torus may not exist, as long as we choose $ G:=[-1/2,1/2]^2 $ (because $ y_2=-1 \notin [-1/2,1/2] $). If $ G:=[-1,1]^2 $, then a unique $ (\bar \omega_1, \bar \omega_2) $-frequency-preserving KAM  torus does exist on $ \mathbb{T}^n \times \{(0,-1)\} $, which is completely different from the parameterized case.

\subsubsection{Non-differentiability of $ \xi^* $ with respect to $ \varepsilon $}
The obtained parameter $ \xi^* $ might be non-differentiable with respect to $ \varepsilon $ at $ 0 $.

Let $ \mathcal{O}:=[-1,1]^2 $, and set $ {{\tilde \omega }_1}\left( \xi  \right) = {\xi _1} $, $ {{\tilde \omega }_2}\left( \xi  \right) = \xi _2^3 $, and $ P\left( {y,\xi } \right) =  - {y_2} $. Then (H1) to (H3) automatically hold. To be more precise, note that $ \left| {\omega \left( \xi  \right) - \omega \left( \zeta  \right)} \right| \leqslant \delta  $ for $ 0<\delta\ll1 $ means $ \left| {\left( {{\xi _1} - {\zeta _1},\xi _2^3 - \zeta _2^3} \right)} \right| \leqslant \delta  $, then $ \left| {{\xi _1} - {\zeta _1}} \right| \leqslant \delta /2 $, and $ \left| {\xi _2^3 - \zeta _2^3} \right| \leqslant \delta /2 $. Therefore, one concludes that $ \left| {{\xi _2} - {\zeta _2}} \right| = {\mathcal{O}^\# }\left( {{\delta ^{1/3}}} \right) $. Taking $ \varphi \left( \delta  \right) = \max \left\{ {{\mathcal{O}^\# }\left( \delta  \right),{\mathcal{O}^\# }\left( {{\delta ^{1/3}}} \right)} \right\} = {\mathcal{O}^\# }\left( {{\delta ^{1/3}}} \right) $, it is easy to verify that for any $ 0<\tau'\leqslant1/2 $, it holds
\[ - \int_0^{\tau'}  {\frac{{\varphi (x)}}{{x\ln x}}dx}  \lesssim  - \int_0^{\tau'}  {\frac{1}{{{x^{2/3}}\ln x}}dx}  \lesssim \int_0^{\tau'}  {\frac{1}{{{x^{2/3}}}}dx}  <  + \infty. \]
However, aiming to achieve $ \left( {{{\bar \omega }_1},{{\bar \omega }_2}} \right) $-frequency-preserving, the equations in \eqref{FPKAMlianli} turn to 
\[\left\{ \begin{gathered}
	\xi _1^ *  = 0, \hfill \\
	{(\xi _2^ * )^3} - \varepsilon  = 0, \hfill \\ 
\end{gathered}  \right. \Rightarrow \left\{ \begin{gathered}
	\xi _1^ *  = 0, \hfill \\
	\xi _2^ *  = {\varepsilon ^{1/3}} \hfill, \\ 
\end{gathered}  \right.\]
which implies that $ \xi^* $ is non-differentiable with respect to $ \varepsilon $ at $ 0 $.

\subsubsection{Nonuniformity of the  distance from $ \xi^* $ to $ \xi_0 $ with respect to $ \varepsilon $}
As shown in Theorem \ref{FPKAMT1}, the selected parameter $ \xi^* $ corresponding to frequency-preserving converges to $ \xi_0 $ whenever $ \varepsilon\to 0^+ $, and the convergence rate could be dominated by $ \mathcal{O}\left(\varphi(\varepsilon) { - \int_0^{\varepsilon} {\frac{{\varphi (x)}}{{x\ln x}}dx} } \right) $ with $ \varphi $ defined in (H3). But this is an upper bound estimate and does not represent the real rate. It is worth mentioning that such a rate is not uniform with respect to $ \varepsilon $, it can be logarithmically slow, or arbitrarily fast (for any rate given in advance) via some particular frequency mappings.

Recall the critical case in Comment (C3), i.e., 
\begin{equation}\label{FPKAMcri}
	\omega \left( \xi  \right) = \bar \omega  + \text{sign}\left( \xi  \right)\exp \left( { - {{\left| \xi  \right|}^{ - \alpha }}} \right)
\end{equation}
with $ \xi  \in \left[ { - 1,1} \right] $ and $ \alpha  > 0 $, where $ \bar \omega $ is Diophantine.
 Now, let us consider a simple parameterized Hamiltonian system with $ n=m=1 $:
\begin{equation}\label{FPKAMcri2}
	H\left( {y,\xi,\varepsilon } \right) =  {\omega \left( \xi  \right)y}  - \varepsilon y.
\end{equation}
Then (H1) to (H3) automatically hold, and it is easy to verify that $ \xi^* =(\omega-\bar\omega)^{-1}(\varepsilon)=(-\ln \varepsilon)^{-\lambda}$ with $ \lambda=\alpha^{-1} >0 $, which provides the logarithmically slow rate. One can also construct a slower convergence rate by increasing the regularity of $ \omega(\xi) $.  As for any given fast rate $ \rho(\varepsilon) $, where the monotone odd function $ 0<\rho(x)=\mathcal{O}(x) $, then modifying $ \omega(\xi) $ in \eqref{FPKAMcri} to $ \omega \left( \xi  \right) = \bar \omega  + \rho^{-1}(\xi) $ one deduces from \eqref{FPKAMcri2} that $ \xi^*-\xi_0 =\rho(\varepsilon)$ (note that (H1) to (H3) still hold, because $ \rho^{-1}(\xi) $ admits weak regularity at $ 0 $, see Comment (C4)). It is indeed an interesting fact that the weak regularity of the frequency mapping actually makes the convergence rate faster.

\subsubsection{Nonexistence of the  frequency-preserving KAM torus when $ \mathcal{O} $ is disconnected}
We have previously assumed  that the parameter set $ \mathcal{O} $ is connected. However, if the  connectivity is removed, then the  frequency-preserving KAM torus may not exist, even  $ \mathcal{O} $ has relatively full Lebesgue measure. 

Let $ \mathcal{O}:=\left[ { - 1,1} \right] \times \left( {\left[ { - 1,1} \right]\backslash \left( {\left[ { - 1,1} \right] \cap \mathbb{Q}} \right)} \right) $, and set $ {{\tilde \omega }_1}\left( \xi  \right) = \xi _1 $, $ {{\tilde \omega }_2}\left( \xi  \right) = \xi _2 $, and $ P\left( {y,\xi } \right) =  - ({\xi _2}+1){y_2} $. Then $ \mathcal{O} $ admits full Lebesgue measure relative to $ [-1,1]^2 $, and (H1) to (H3) automatically hold (with $ \varphi(\delta)=\mathcal{O}^\#(\delta) $). The equations in \eqref{FPKAMlianli} become
\[\left\{ \begin{gathered}
	\xi _1^ * = 0, \hfill \\
	\xi _2^ *  - \varepsilon ({\xi _2^*}+1) = 0, \hfill \\ 
\end{gathered}  \right. \Rightarrow \left\{ \begin{gathered}
	\xi _1^ *  = 0, \hfill \\
	\xi _2^ *  = \varepsilon (1-\varepsilon)^{-1}. \hfill \\ 
\end{gathered}  \right.\]
Note that $ \varepsilon (1-\varepsilon)^{-1} \in \mathbb{Q} $ whenever $ \varepsilon\in \mathbb{Q} $ and $0< \varepsilon \ll 1 $, and hence, such a parameter $ \xi^* \notin \mathcal{O} $.

\subsubsection{Nonexistence of the  frequency-preserving KAM torus when $ \omega(\xi) $ is discontinuous}\label{FPKAMSUB213}
 Note that all of our assumptions (H1) to (H3) are based on the premise that the  frequency mapping and  perturbations are continuous with respect to the parameter. Below we will show that if the frequency mapping is not continuous (which of course also violates the assumptions, but does not conflict with some subsequent counterexamples, which require continuity of the frequency mapping), then the frequency-preserving KAM torus may not exist.

Let $ \mathcal{O}:=[-1,1]^2 $, and set $ {{\tilde \omega }_1}\left( \xi  \right) = \mathcal{M}\left( {{\xi _1}} \right) $, $ {{\tilde \omega }_2}\left( \xi  \right) = \xi _2 $, and $ P\left( {y,\xi } \right) =  - {\xi _2}{y_1} - {y_2} $, where $ \mathcal{M}\left( 0 \right) = 0 $, and $ \mathcal{M}\left( {{\xi _1}} \right) = \xi _1^{ - 1} $ for $ \xi_1 \ne 0 $. Then the frequency mapping is discontinuous with respect to $ 
\xi $ at $ (0,0) \in \mathcal{O}^o $, and the equations in \eqref{FPKAMlianli} read
\[\left\{ \begin{gathered}
	\mathcal{M}(\xi _1^ * ) - \varepsilon \xi _2^ *  = 0, \hfill \\
	\xi _2^ *  - \varepsilon  = 0, \hfill \\ 
\end{gathered}  \right. \Rightarrow \left\{ \begin{gathered}
	\xi _1^ *  = {\varepsilon ^{ - 2}}, \hfill \\
	\xi _2^ *  = \varepsilon,  \hfill \\ 
\end{gathered}  \right.\]
which implies that $ \xi_1^* \notin [-1,1]$ as long as $ \varepsilon>0 $ is sufficiently small, and therefore such a parameter solution $ \xi^* $ does not exist.

\subsubsection{Nonexistence of the  frequency-preserving KAM torus when $ P(y,x, \xi) $ is discontinuous with respect to $ \xi $}
Similar to the exposition in Section \ref{FPKAMSUB213}, we will show here that if perturbations are not continuous with respect to the parameter, then the frequency-preserving KAM torus may be destroyed.

Let $ \mathcal{O}:=[-1,1]^2 $, and set $ {{\tilde \omega }_1}\left( \xi  \right) = {\xi _1} $, $ {{\tilde \omega }_2}\left( \xi  \right) = \xi _2 $, but $ P\left( {y,\xi } \right) =  - \mathcal{N}\left( {{\xi _2}} \right){y_1} - {y_2} $, where $ \mathcal{N}\left( 0 \right) = 0 $, and $ \mathcal{N}\left( {{\xi _2}} \right) = \xi _2^{ - 2} $ for $ \xi_2 \ne 0 $. It is evident that the perturbation is discontinuous with respect to $ \xi $ at $ (0,0) \in \mathcal{O}^o$, and $ \varepsilon P(y,\xi)$ is small for fixed $ y,\xi $, whenever $ 0<\varepsilon \ll 1 $ (but the smallness is nonuniform). Then the $ \left( {{{\bar \omega }_1},{{\bar \omega }_2}} \right) $-frequency-preserving equations in  \eqref{FPKAMlianli} become
\[\left\{ \begin{gathered}
	\xi _1^ *  - \varepsilon \mathcal{N}(\xi _2^ * ) = 0, \hfill \\
	\xi _2^ *  - \varepsilon  = 0, \hfill \\ 
\end{gathered}  \right. \Rightarrow \left\{ \begin{gathered}
	\xi _1^ *  = {\varepsilon ^{ - 1}}, \hfill \\
	\xi _2^ *  = \varepsilon , \hfill \\ 
\end{gathered}  \right.\]
which implies that $ \xi_1^* \notin [-1,1]$ whenever $ 0<\varepsilon \ll 1 $, and therefore $ \xi^* $ does not exist. 

\subsubsection{Nonexistence of the  frequency-preserving KAM torus in the absence of the Internal Condition (H1)}
Here we show a crucial fact that the Internal Condition (H1) cannot be removed in the sense of frequency-preserving, by constructing the counterexample below.

Let $ \mathcal{O}:=[-1,1]\times[0,1] $, and set $ {{\tilde \omega }_1}\left( \xi  \right) = {\xi _1} $, $ {{\tilde \omega }_2}\left( \xi  \right) = {\xi _2} $, and $ P\left( {y,\xi } \right) = {y_2} $. Then (H1) fails because the unique parameter $ \xi_0=(0,0) \notin \mathcal{O}^o  $ ($ \xi_0 \in \mathcal{O} $), but (H2) and (H3) automatically hold (with $ \varphi(\delta)=\mathcal{O}^\#(\delta) $). Now, the equations in \eqref{FPKAMlianli} turn to 
\[\left\{ \begin{gathered}
	\xi _1^ *  = 0 ,\hfill \\
	\xi _2^ *  + \varepsilon  = 0, \hfill \\ 
\end{gathered}  \right. \Rightarrow \left\{ \begin{gathered}
	\xi _1^ *  = 0 ,\hfill \\
	\xi _2^ *  =  - \varepsilon  , \hfill \\ 
\end{gathered}  \right.\]
which implies that $ \xi_2^* \notin [0,1]$ due to $ \varepsilon>0 $, and therefore $ \xi^* $ does not exist.

Next, we construct another weaker counterexample, which contradicts both (H1) and (H3). However, this is different from the above counterexample, which focuses on the internal property of the parameter $ \xi_0 $, here we focus on the internal property of the frequency value $ \omega(\xi_0)$.



Let $ \mathcal{O}:=[-1,1]^2 $, and set $ {{\tilde \omega }_1}\left( \xi  \right) = {\xi _1} $, $ {{\tilde \omega }_2}\left( \xi  \right) = \xi _2^2 $, and $ P\left( {y,\xi } \right) = \xi_1 y_1+\left( {\xi _2^2 + 1} \right){y_2} $. Then (H1) fails because $\omega \left( \xi_0 \right)= \omega \left( {\left( {0,0} \right)} \right)=(\bar \omega_1, \bar \omega_2) $ is not an interior point of the set $ \omega \left( \mathcal{O} \right) =[\bar \omega_1 -1,\bar \omega_1 +1]\times \bar [\bar \omega_2,\bar \omega_2 +1]$ due to the non-negativity of $ \xi_2^2 $ at $ 0 $, but the internal property of $ \xi_0=(0,0) \in \mathcal{O}^o=(-1,1)^2 $ holds. As for (H2), note that near $ \xi_0 $, we have
\begin{align*}
\mathop {\sup }\limits_{y,x \in G \times \mathbb{T}^n} \left| {P\left( {y,\xi } \right) - P\left( {y,\zeta } \right)} \right| &\leqslant \mathop {\sup }\limits_{y \in G} \left| {{\xi _1}{y_1} - {\zeta _1}{y_1}} \right| + \mathop {\sup }\limits_{y \in G} \left| {\xi _2^2{y_2} - \zeta _2^2{y_2}} \right|\\
& \leqslant {\text{diag}}G\left( {\left| {{\xi _1} - {\zeta _1}} \right| + \left| {\xi _2^2 - \zeta _2^2} \right|} \right)\\
& \leqslant 2{\text{diag}}G\left| {\left( {{\xi _1} - {\zeta _1},\xi _2^2 - \zeta _2^2} \right)} \right|\\
& = 2{\text{diag}}G\left| {\omega \left( \xi  \right) - \omega \left( \zeta  \right)} \right|.
\end{align*}
Therefore, by supplementing the definition ($ \omega $ and $ P $ agree on the properties of the parameter simultaneously), we prove that (H2) holds. However, (H3) fails due to the  non-monotonicity of $ \tilde \omega_2 $ at $ 0 $. Note that the equations in \eqref{FPKAMlianli} take the form
\[\left\{ \begin{gathered}
		\xi _1^ *  +\varepsilon\xi _1^ *=0, \hfill \\
		{(\xi _2^ * )^2} + \varepsilon ( {{{(\xi _2^ * )}^2} + 1} ) = 0, \hfill \\ 
	\end{gathered}  \right.\]
	then $ \xi_2^* $ does not exist for any $ \varepsilon>0 $, so does $ \xi^* $. This shows that the $ (\bar{\omega}_1,\bar{\omega}_2) $-frequency-preserving KAM torus cannot survive.

\subsubsection{Nonexistence of the  frequency-preserving KAM torus in the absence of the Relative Singularity Condition (H2) and the  Controllability Condition (H3)}\label{FPKAMSEC217}
Here we emphasize a significant point: If the Relative Singularity Condition (H2) and the Controllability Condition (H3) fail, then the frequency-preserving KAM torus may not exist. Noticing that (H2) and (H3) are not completely independent, we will construct a non-injective frequency mapping counterexample below.

 Let $ \mathcal{O}:=[-1,1]^2 $, and set $ {{\tilde \omega }_1}\left( \xi  \right) = {\xi _1} $, $ {{\tilde \omega }_2}\left( \xi  \right) = {\xi _1}{\xi _2} $, and $ P\left( {y,\xi } \right) =  - {y_2} $. It is evident that (H1) holds (with $ \xi_0=(0,0) $), but (H2) and (H3) fail, because for any $ j>1 $, taking $ a = \left( {0,1/j} \right) $ and $ b = \left( {0, - 1/j} \right) $ yields $ \left| {\omega \left( a \right) - \omega \left( b \right)} \right| \equiv 0 $, but $ \left| {a - b} \right| = \left| {\left( {0,2/j} \right)} \right| = 2/j > 0 $. Therefore, (H2) fails because the supremum is undefined; and  (H3) fails because  $ \varphi(\delta) \equiv \varphi(0)=\kappa>0 $ for all $ 0<\delta\leqslant 2 $, and the integrability condition also fails due to 
\[ - \int_0^{{{\tau'}}} {\frac{{\varphi (x)}}{{x\ln x}}dx}  = -\kappa \int_0^{{{\tau'}}} {\frac{1}{{x\ln x}}dx}   =  - \ln \left( { - \ln x} \right)|_0^{\tau'}  =  + \infty ,\]
provided with any $ 0<{\tau'}\leqslant1/2 $. Now, if the $ (\bar{\omega}_1,\bar{\omega}_2) $-frequency-preserving KAM torus exists, then the equations in \eqref{FPKAMlianli} become
\[\left\{ \begin{gathered}
	\xi _1^ *  = 0, \hfill \\
	\xi _1^ * \xi _2^ *  - \varepsilon  = 0, \hfill \\ 
\end{gathered}  \right. \Rightarrow \left\{ \begin{gathered}
	\xi _1^ *  = 0, \hfill \\
	\varepsilon  = 0, \hfill \\ 
\end{gathered}  \right.\]
which contradicts $ \varepsilon>0 $.

Another much more trivial counterexample could be considered as $ \omega(\xi)\equiv(\bar \omega_1,\bar \omega_2) $ (i.e., a constant mapping) and $ P(y,\xi) =-y_2$.

\subsection{Some strange phenomena via general small perturbations beyond Theorem \ref{FPKAMT1}}\label{FPKAMSEC2.2}
Note that the whole perturbation in Hamiltonian system \eqref{FPKAMHamilton} is in the form  $  \tilde P\left( {y,x,\xi, \varepsilon} \right) =\varepsilon P\left( {y,x,\xi  } \right) $. If it has some singularity with respect to $ \varepsilon $, then some bad phenomenon may set in. Throughout this section, let us consider a specific analytic Hamiltonian system in the case $ n=m=2 $:
\[H\left( {y,\xi,\varepsilon } \right) = \left\langle {\omega \left( \xi  \right),y} \right\rangle  +  \tilde{P}\left( {y,\xi ,\varepsilon } \right),\]
where $ \xi=(\xi_1,\xi_2) \in \mathcal{O}:=[-1,1]^2 $, and  $ \omega \left( \xi  \right) = \left( {{{\bar \omega }_1} + \tilde \omega_1 \left( {{\xi_1 }} \right),{{\bar \omega }_2} + \tilde \omega_2 \left( {{\xi_2 }} \right)} \right) $ with $ \left( {{{\bar \omega }_1},{{\bar \omega }_2}} \right) $ being  Diophantine. The perturbation $ \tilde P\left( {y,\xi,\varepsilon } \right)  $ is still small whenever  $ 0<\varepsilon \ll 1$. Now, the $ \left( {{{\bar \omega }_1},{{\bar \omega }_2}} \right) $-frequency-preserving equations in \eqref{FPKAMlianli} with parameter $ {\xi ^ * } = \left( {\xi _1^ * ,\xi _2^ * } \right) \in \mathcal{O} $ become
\begin{equation}\label{FPKAMlianli2}
	\left\{ \begin{gathered}
		\tilde \omega_1 \left( {\xi_1 ^ * } \right) + {\partial _{y_1}}\tilde{P}\left( {y,{\xi ^ * },\varepsilon } \right) = 0, \hfill \\
		\tilde \omega_2 \left( {\xi_2 ^ * } \right) + {\partial _{y_2}}{\tilde P}\left( {y,{\xi ^ * },\varepsilon } \right) = 0, \hfill \\ 
	\end{gathered}  \right.
\end{equation}
because $ H(y, \xi, \varepsilon) $ is independent of the angle variable $ x $.

\subsubsection{Almost everywhere discontinuity of $ \xi^* $ with respect to $ \varepsilon $}
If the small perturbation $ \tilde{P} $ is discontinuous with respect to the  parameter $ \xi $, then the obtained parameter $ \xi^* $ corresponding to frequency-preserving KAM torus may be  discontinuous with respect to $ \varepsilon $.

 For example, let $ {{\tilde \omega }_1}\left( {\xi _1 } \right) = \xi _1  $, $ {{\tilde \omega }_2}\left( {\xi _2} \right) = \xi _2  $,  and $ \tilde{P}\left( {y,{\xi  },\varepsilon } \right) =  - \varepsilon D\left( \varepsilon  \right)y_2 $, where $ D(x) $ denotes the Dirichlet function, i.e., $ D(x)= 1 $ when $ x $ is irrational, and $ D(x)= 0 $ when $ x $ is rational. Then one verifies that (H1) to (H3) automatically hold (with $ \varphi(\delta)=\mathcal{O}^\#(\delta) $), and the perturbation is small due to $ \left| {\varepsilon D\left( \varepsilon  \right)} \right| \leqslant \varepsilon  \to {0^ + } $ as $ \varepsilon \to 0^+ $, but the equations in \eqref{FPKAMlianli2} lead to 
\[\left\{ \begin{gathered}
	\xi _1^ *   = 0, \hfill \\
	\xi _2^ *  - \varepsilon D\left( \varepsilon  \right) = 0, \hfill \\ 
\end{gathered}  \right. \Rightarrow \left\{ \begin{gathered}
	\xi _1^ *  = 0, \hfill \\
	\xi _2^ *  = \varepsilon D\left( \varepsilon  \right), \hfill \\ 
\end{gathered}  \right.\]
which implies that the parameter $ \xi^* $ is almost everywhere discontinuous with respect to $ \varepsilon $ in any neighborhood of $ 0 $ (except for $ \varepsilon=0 $).

\subsubsection{Nowhere differentiability of $ \xi^* $ with respect to $ \varepsilon $}
Even if the obtained parameter $ \xi^* $ is continuous with respect to $ \varepsilon $, it might be nowhere differentiable. 

For example, let $ {{\tilde \omega }_1}\left( {\xi _1 } \right) = \xi _1  $, $ {{\tilde \omega }_2}\left( {\xi _2 } \right) = \xi _2  $,  and $ \tilde{P}\left( {y,{\xi },\varepsilon } \right) =   - \varepsilon W(\varepsilon)y_2 $, where $ W(x) $ denotes the Weierstrass function, e.g., $ W\left( x \right) = \sum\nolimits_{n = 0}^\infty  {{2^{ - n}}\cos \left( {{{99}^n}\pi x} \right)}  $.
It is easy to check that (H1) to (H3) automatically hold (with $ \varphi(\delta)=\mathcal{O}^\#(\delta) $), and the perturbation is small due to $\left| {\varepsilon W\left( \varepsilon  \right)} \right| \leqslant 2\varepsilon  \to {0^ + } $ as $ \varepsilon \to 0^+ $, but the equations in \eqref{FPKAMlianli2} yield 
\[\left\{ \begin{gathered}
	\xi _1^ *   = 0, \hfill \\
	\xi _2^ *  - \varepsilon W\left( \varepsilon  \right) = 0, \hfill \\ 
\end{gathered}  \right. \Rightarrow \left\{ \begin{gathered}
	\xi _1^ *  = 0, \hfill \\
	\xi _2^ *  = \varepsilon W\left( \varepsilon  \right), \hfill \\ 
\end{gathered}  \right.\]
which means that $ \xi^* $ is continuous but nowhere differentiable with respect to $ \varepsilon $.

\subsection{Utilizing extra parameters to adjust the Relative Singularity Condition (H2)}\label{FPKAMLESS}
 As we will show in this section,  relatively extra parameters sometimes play an unexpected role  to achieve frequency-preserving.

Let us first consider an interesting problem: If the parameter number of the frequency mapping is less than the perturbation, can there be a corresponding frequency-preserving KAM theorem like Theorem \ref{FPKAMT1}? Fortunately, we could  answer this question positively by establishing Theorem \ref{FPKAMT2} via weaker assumptions below. For the sake of simplicity in the statements of the main KAM results in Section \ref{FPKAMintro}, we prefer to present the weakened version here.
  
We still study the parameterized  perturbed Hamiltonian system \eqref{FPKAMHamilton} with continuous $ \omega \left(  \cdot  \right):{\mathcal{O}_1} \to {\mathbb{R}^n} $ and $ P\left( {y,x, \cdot } \right):{\mathcal{O}_1} \times {\mathcal{O}_2} \to {\mathbb{R}^1} $, where ${\mathcal{O}_1} \subset {\mathbb{R}^{{m_1}}} $ and $ {\mathcal{O}_2} \subset {\mathbb{R}^{{m_2}}} $ with $ m_1,m_2 \in \mathbb{N}^+ $ are 
connected compact parameter sets having interior points. Denote by $ \xi  = (\bar \xi ,\tilde \xi ) \in {\mathcal{O}_1} \times {\mathcal{O}_2} $. Then the corresponding assumptions read:

(H1*) [Internal Condition] For $ \Upsilon  \in \left(\omega(\mathcal{O}_1)\right)^o$ given in advance, there exists $ \xi_0 \in (\mathcal{O}_1)^o$ such that $ \Upsilon  = \omega \left( {{\xi _0}} \right) $ admits Diophantine nonresonance, which is defined as:
\begin{equation}\label{FPKAMDIO2}
	\left| {\left\langle {k,\omega \left( {{\xi _0}} \right)} \right\rangle } \right| \geqslant \gamma {\left| k \right|^{ - \tau }},\;\;\gamma  > 0,\;\;\tau>\max\{n-1,1\},\;\;\forall 0 \ne k \in {\mathbb{Z}^n}.
\end{equation}

(H2*) [Relative Singularity Condition] There exists a neighborhood $ \mathcal{V} \subset \mathcal{O}_1 $ of $ \xi_0 $ and a continuous function $ \tilde\xi (\bar\xi) $ with $\tilde\xi (\mathcal{V}) \subset (\mathcal{O}_2)^o $, such that the following holds (allowing the supremum to be continuously supplemented according to the sup-limit)
\[\mathop {\sup }\limits_{\bar\xi  \ne \bar\zeta ,\bar\xi ,\bar\zeta  \in \mathcal{V}} \frac{{\mathop {\sup }\limits_{y,x \in G \times {\mathbb{T}^n}} \left| {P\left( {y,x,\bar\xi, \tilde\xi (\bar\xi) } \right) - P\left( {y,x,\bar\zeta, \tilde\zeta (\bar\zeta) } \right)} \right|}}{{\left| {\omega ( \bar\xi  ) - \omega ( \bar\zeta  )} \right|}} <+\infty. \]

Now we could give the following theorem.

\begin{theorem}\label{FPKAMT2}
Assume (H1*), (H2*) and (H3). Then there exists a sufficiently small $ {\varepsilon _0} > 0 $ such that, for any $ 0 < \varepsilon  < {\varepsilon _0} $,  one can find some $ {\xi^* }=(\bar\xi^*, \tilde\xi^*)\in ({\mathcal{O}_1} \times {\mathcal{O}_2})^o  $  such that the	perturbed Hamiltonian system $ H\left( {y,x,{\xi^* },\varepsilon } \right) $  admits an analytic quasi-periodic invariant torus with toral frequency $ \Upsilon=\omega \left( {{\xi _0}} \right) $. In particular, $  {\bar\xi^* } = {\xi _0} +\mathcal{O}\left(\varphi \left( \varepsilon  \right) { - \int_0^{\varepsilon } {\frac{{\varphi (x)}}{{x\ln x}}dx} } \right) $ as $ \varepsilon \to 0^+ $.
\end{theorem}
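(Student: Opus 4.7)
The plan is to deduce Theorem \ref{FPKAMT2} directly from Theorem \ref{FPKAMT1} by a pullback reduction along the continuous selector $\tilde\xi(\bar\xi)$ furnished by (H2*). The point is that although the ambient parameter lives in the higher-dimensional set $\mathcal{O}_1\times\mathcal{O}_2$, the frequency mapping only sees $\bar\xi$, and the extra parameters enter only through the perturbation; hence we may first fix $\tilde\xi=\tilde\xi(\bar\xi)$ along the section, run the one-parameter-family KAM theorem on $\mathcal{V}\subset\mathcal{O}_1$, and then recover the full $\xi^*=(\bar\xi^*,\tilde\xi(\bar\xi^*))$ at the very end.

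Concretely, I would first introduce the restricted perturbation $\widehat P(y,x,\bar\xi):=P\bigl(y,x,\bar\xi,\tilde\xi(\bar\xi)\bigr)$ for $\bar\xi\in\mathcal{V}$, and the associated effectively single-parameter Hamiltonian
\begin{equation*}
\widehat H(y,x,\bar\xi,\varepsilon)=\langle\omega(\bar\xi),y\rangle+\varepsilon\,\widehat P(y,x,\bar\xi).
\end{equation*}
Since $P$ is real analytic in $(y,x)$ for each fixed parameter, so is $\widehat P$; continuity in $\bar\xi$ is inherited from continuity of $P$ in $\xi$ and of $\tilde\xi$ in $\bar\xi$. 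I would then verify the three hypotheses of Theorem \ref{FPKAMT1} for $\widehat H$: (H1) for $\widehat H$ is \emph{verbatim} (H1*), because $\omega$ is independent of $\tilde\xi$ and the Diophantine condition at $\xi_0$ is unchanged; (H2) for $\widehat P$ is exactly the finiteness statement in (H2*), since the numerator there is precisely $\sup|\widehat P(y,x,\bar\xi)-\widehat P(y,x,\bar\zeta)|$; and (H3) is already a statement about $\omega$ alone, so it carries over unaltered.

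With the hypotheses of Theorem \ref{FPKAMT1} verified, I apply it to $\widehat H$ on the neighbourhood $\mathcal{V}$ of $\xi_0$. This produces, for every sufficiently small $\varepsilon>0$, a parameter $\bar\xi^*\in\mathcal{V}\subset\mathcal{O}_1^{o}$ with
\begin{equation*}
\bar\xi^*=\xi_0+\mathcal{O}\!\left(\varphi(\varepsilon)\Bigl(-\int_0^{\varepsilon}\frac{\varphi(x)}{x\ln x}\,dx\Bigr)\right)\qquad\text{as }\varepsilon\to 0^+,
\end{equation*}
such that $\widehat H(y,x,\bar\xi^*,\varepsilon)$ carries an analytic quasi-periodic invariant torus of frequency $\omega(\xi_0)$. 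Setting $\tilde\xi^*:=\tilde\xi(\bar\xi^*)$ and $\xi^*:=(\bar\xi^*,\tilde\xi^*)$, the torus for $\widehat H(\,\cdot\,,\bar\xi^*,\varepsilon)$ is by construction the same as the one for $H(\,\cdot\,,\xi^*,\varepsilon)$; the requirement $\xi^*\in(\mathcal{O}_1\times\mathcal{O}_2)^{o}$ follows from $\bar\xi^*\in\mathcal{V}\subset\mathcal{O}_1^{o}$ together with the assumption $\tilde\xi(\mathcal{V})\subset(\mathcal{O}_2)^{o}$ built into (H2*).

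There is essentially no new analytic obstacle beyond Theorem \ref{FPKAMT1}; the only genuine point to be careful about is that the iteration is run on the $\bar\xi$-variable alone and that the asymptotic estimate for $\bar\xi^*$ is transferred from the conclusion of Theorem \ref{FPKAMT1} without modification, while $\tilde\xi^*$ simply tracks $\bar\xi^*$ through the continuous selector. If one wished to avoid invoking Theorem \ref{FPKAMT1} as a black box, the same reduction can also be performed inside its proof: at each step of the modified parameter-translation/quasi-Newton scheme in Section \ref{FPKAMsec-4}, one updates only the $\bar\xi$-component and re-evaluates the perturbation at $\tilde\xi(\bar\xi_{\text{new}})$, so that the whole convergence argument based on (H2) and (H3) goes through verbatim with $\widehat P$ in place of $P$.
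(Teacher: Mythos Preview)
Your proposal is correct and is essentially the same reduction the paper indicates: the paper says the proof ``is almost the same as that of Theorem \ref{FPKAMT1}, as long as we fix the extra parameters of the perturbation relative to the frequency mapping, then only the latter parameters are translated in each KAM step,'' which is exactly your pullback along the selector $\tilde\xi(\bar\xi)$. Your packaging---applying Theorem \ref{FPKAMT1} directly to $\widehat H$ as a black box rather than re-running the iteration---is a clean and legitimate way to carry this out, and you also note the alternative of inserting the reduction step-by-step into the proof of Theorem \ref{FPKAMT1}, which is precisely the route the paper gestures at.
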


 It shall be mentioned that the  proof of Theorem \ref{FPKAMT2} is almost the same as that of Theorem \ref{FPKAMT1}, as long as we fix the extra parameters of the perturbation relative to the frequency mapping, then only the latter parameters are translated in each KAM step, see details from Section \ref{FPKAMProofT1}. 

Let us take  an explicit example to understand Theorem \ref{FPKAMT2} more clearly.  Consider the  parameterized  perturbed Hamiltonian system with parameter sets $ {\mathcal{O}_1} = {\left[ { - 2,2} \right]^2} $ and $ {\mathcal{O}_2} = \left[ { - 2,2} \right]^2 $:
\begin{equation}\label{FPKAMHAM2}
		H\left( {y,\xi,\varepsilon } \right) = \left\langle {\omega \left( \xi  \right),y} \right\rangle  + \varepsilon P\left( {y,\xi  } \right),
\end{equation}
where $ 0<\varepsilon\ll 1$, provided the frequency mapping $ \omega \left( \bar\xi  \right) = \left( {{{\bar \omega }_1} + \xi_1,{{\bar \omega }_2} + \xi_2} \right) $ with $ \left( {{{\bar \omega }_1},{{\bar \omega }_2}} \right) $ being Diophantine, and $P( {y,\xi })=\langle ( {{{\left| {{\xi _1}} \right|}^{1/2}} + {\xi _3},{{\left| {{\xi _2}} \right|}^{1/2}} + {\xi _4}} ), y\rangle     $. It is evident that (H2) fails near $ (0,0,0,0) $, but taking $ \tilde \xi  = \left( {{\xi _3},{\xi _4}} \right) = ( { - {{\left| {{\xi _1}} \right|}^{1/2}} + {\xi _1}, - {{\left| {{\xi _2}} \right|}^{1/2}} + {\xi _2}} ) $ one can verify that (H2*) holds near $ (0,0) $.  In other words, we can adjust the relative singularity by modifying the weak regularity with relatively extra parameters.

Following the same idea, we can also deal with the case that the frequency mapping has more parameters than the perturbation. Note that (H2) may not be satisfied at this time, but we have the opportunity to modify the relative singularity by doing similar approach so that (H2*) is satisfied. For example, consider the  parameterized  perturbed Hamiltonian system  \eqref{FPKAMHAM2}, where the frequency mapping $ \omega \left( \xi  \right) = \left( {{{\bar \omega }_1} + \xi_1,{{\bar \omega }_2} + \xi _2^2 + {\xi _3}} \right) $ with $ \left( {{{\bar \omega }_1},{{\bar \omega }_2}} \right) $ being  Diophantine, and $   P\left( {y,\xi } \right) = {\xi _2}{e^{{y_2}}} $. Here $ \xi_i \in [-2,2]$ for $ i=1,2,3 $. Then if $ y \in [-1,1]^2 $, taking $ \xi  = \left( {0,{\xi _2},0} \right) $ and $ \zeta  = \left( {0,{\zeta _2},0} \right) $ with $ {\xi _2} + {\zeta _2} = {j^{ - 1}} $ ($ j \in \mathbb{N}^+ $) yields that 
\begin{align*}
\mathop {\sup }\limits_{y,x \in G \times {\mathbb{T}^n}} \left| {P\left( {y,\xi } \right) - P\left( {y,\zeta } \right)} \right| &= \mathop {\sup }\limits_{{y_2} \in \left[ { - 1,1} \right]} {e^{{y_2}}}\left| {{\xi _2} - {\zeta _2}} \right| = e\left| {{\xi _2} - {\zeta _2}} \right|\\
& = ej\left| {\xi _2^2 - \zeta _2^2} \right| = ej\left| {\omega \left( \xi  \right) - \omega \left( \zeta  \right)} \right|,
\end{align*}
which contradicts (H2) near $ (0,0,0) $, whenever $ j $ is sufficiently large. However, taking $ \xi_3 $ as a function of $ \xi_2 $ may make  (H2*) hold near $ (0,0) $ with the new  parameter $ \xi=(\xi_1,\xi_2) $ ($ \zeta=(\zeta_1,\zeta_2) $) of  two variables, e.g., $ \xi_3:=- \xi_2^2+\xi_2$. Then one can similarly achieve  frequency-preserving.

It shall be mentioned that this approach is also valid for partial frequency-preserving KAM in Section \ref{FPKAMPAR} and infinite-dimensional KAM in Section \ref{FPKAMINFINITE}.

\subsection{Partial frequency-preserving KAM}\label{FPKAMPAR}
As known to all, in general, some dynamics cannot be completely preserved under the perturbation if the unperturbed dynamical system has certain degeneracy. Back to our concern on the frequency-preserving KAM, it is evident that the degeneracy of the Hamiltonian system may destroy the prescribed Diophantine frequency for the perturbed torus, even in the parameterized settings (see the counterexample constructed in Section \ref{FPKAMSEC217}). Partial preservation of frequencies, therefore, is a foundational problem in KAM theory, although remains relatively unexplored. On this aspect, via a nondegenerate condition of R\"ussmann type on a submanifold,  Chow et al. \cite{MR1938331} proved that the perturbed toral frequencies could be partially preserved according to the maximal degeneracy of the Hessian matrix of the unperturbed Hamiltonian system. They also obtained KAM results concerning partial frequency-ratio-preserving. See also Sevryuk \cite{MR2221801,MR2433684,MR3783834} for partial frequency-preserving KAM  under moderately degenerate integrable or partially integrable Hamiltonian systems. To be more precise, he proved that the unperturbed invariant $ n $-tori with prescribed frequencies (or frequency ratios) do not persist, but the first $ d<n $ frequencies (or ratios) are preserved. We also mention the recent work of Zhao and Li \cite{MR4355926}, which considers the partial frequency-ratio-preserving KAM  for unperturbed Hamiltonian systems  starting from the same Riemannian manifold.

Interestingly, a new version of the  partial frequency-preserving KAM theorem could also be obtained via a similar approach in this paper. Note that the nondegeneracy here is mainly reflected in the requirement of the frequency mapping $ \omega(\xi) $, namely the Relative Singularity Condition (H2) and the  Controllability Condition (H3). As a consequence, one may consider the case where (H2) holds for some (not all) components of $ \omega(\xi) $, i.e., $ \omega(\xi) $ admits certain degeneracy. In this case, we could  combine a succession of our parameter translation technique with the classic digging method to establish a partial frequency-preserving KAM theorem. Let us consider the simplest case, i.e., the parameterized Hamiltonian system \eqref{FPKAMHamilton} with an uncoupled frequency mapping $ \omega(\xi)=(\omega_1(\xi_1), \cdots, \omega_n(\xi_n)) $ having \textit{nondegeneracy} for components of indices $ 1 \leqslant {i_1} <  \cdots  < {i_a} \leqslant n $. Denote by $ 1\leqslant{\ell _1}<  \cdots  <{\ell _b}\leqslant n $ the other indices (with \textit{nondegeneracy}), where $ \sum\nolimits_{j = 1}^a {{i_j}}  + \sum\nolimits_{j = 1}^b {{\ell _j}}  = n $. On these grounds, let $ \hat\xi=(\xi_{i_1},...,\xi_{i_a}) $ and $ \check\xi=(\xi_{\ell_1},...,\xi_{\ell_b}) $ for convenience. 
Now,  the corresponding new assumptions are given below.

(H2$ ^\prime $) [Relative Singularity Condition] There exists a neighborhood $ \mathcal{V}' \subset  \cup _{j = 1}^a{\mathcal{O}_{{i_j}}} $ of $ \hat\xi_0=({({\xi _0})_{{i_1}}}, \ldots, {({\xi _0})_{{i_a}}}) $, such that  the following holds (allowing the supremum to be continuously supplemented according to the sup-limit)
\[\mathop {\sup }\limits_{\hat\xi  \ne \hat\zeta ,\hat\xi ,\hat\zeta  \in \mathcal{V}'} \frac{{\mathop {\sup }\limits_{y,x \in G \times {\mathbb{T}^n},\;\check \xi \in \cup _{j = 1}^b{\mathcal{O}_{{\ell_j}}}} \left| {P\left( {y,x,\check\xi,\hat \xi } \right) - P\left( {y,x,\check\xi,\hat \zeta } \right)} \right|}}{\sum\limits_{j = 1}^a { {\left| {\omega_{i_j} \left( \xi_{i_j}  \right) - \omega_{i_j} \left( \zeta_{i_j}  \right)} \right|}}}<+\infty.\]
Moreover, $ |\omega(\xi)-\omega(\zeta)| \geqslant L|\xi-\zeta| $ with some $ L>0 $ and all $ \xi,\zeta \in \mathcal{O} $.

(H3$ ^\prime $) [Controllability Condition] Assume that 
\[ \varphi \left( \delta  \right): = \sup_{1\leqslant j \leqslant a}\left\{ { \left| {\xi_{i_j}  - \zeta_{i_j} } \right|:\left| {\omega_{i_j} \left( \xi_{i_j}  \right) - \omega_{i_j} \left( \zeta_{i_j}  \right)} \right| \leqslant \delta } \right\} \]
 is continuously defined on $ [0, {\tau'}] $ with some $ 0<{\tau'}\leqslant1/2 $, and satisfies $ \varphi(0)=0 $ and $ \varphi(x)>0 $ for $ x \in  (0, {\tau'}] $. Moreover, it holds
\[ - \int_0^{{{\tau'}}} {\frac{{\varphi (x)}}{{x\ln x}}dx}  <  + \infty .\]

\begin{theorem}[Partial frequency-preserving KAM]\label{PFPKAM}
Assume (H1), (H2$ ^\prime $) and (H3$ ^\prime $). Then there exists a sufficiently small $ {\varepsilon _0} > 0 $ such that, for any $ 0 < \varepsilon  < {\varepsilon _0} $,  one can find some $ {\xi^* }  $ in a family of Cantor sets $ {\Pi _\varepsilon }: = \prod\nolimits_{j = 1}^b {\left( {{\mathcal{O}_{{\ell _j}}}\backslash {\mathcal{O}_{{\ell _j}}}\left( \varepsilon  \right)} \right)}  \times \left\{ {\xi _{{i_1}}^ * , \ldots ,\xi _{{i_a}}^ * } \right\} \subset \mathcal{O}^o$,  such that the	perturbed Hamiltonian system $ H\left( {y,x,{\xi^* },\varepsilon } \right) $ with parameter $ \xi^* $  admits an analytic quasi-periodic invariant torus with toral  frequency $ \omega(\xi^*) $, where $ \omega_{i_j}(\xi^*)=\omega_{i_j} \left( {{\xi _0}} \right) $ for all $ 1 \leqslant j \leqslant a $. Moreover, $ \mathop {\lim }\nolimits_{\varepsilon  \to {0^ + }} (\xi _{{i_1}}^ * , \ldots ,\xi _{{i_a}}^ * ) = ({({\xi _0})_{{i_1}}}, \ldots ,{({\xi _0})_{{i_a}}}) $, and $ \mathop {\lim }\nolimits_{\varepsilon  \to {0^ + }} {\rm meas}\prod\nolimits_{j = 1}^b {\left( {{\mathcal{O}_{{\ell _j}}}\backslash {\mathcal{O}_{{\ell _j}}}\left( \varepsilon  \right)} \right)}  = {\rm meas}\prod\nolimits_{j = 1}^b {{\mathcal{O}_{{\ell _j}}}}  $.
\end{theorem}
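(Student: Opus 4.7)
The plan is to hybridize the parameter translation technique developed for Theorem \ref{FPKAMT1} with the classical Cantor-set (digging) method, exploiting the fact that the frequency map is uncoupled. Decompose the parameter as $\xi=(\hat\xi,\check\xi)$ with $\hat\xi=(\xi_{i_1},\ldots,\xi_{i_a})$ and $\check\xi=(\xi_{\ell_1},\ldots,\xi_{\ell_b})$. Because $\omega$ is uncoupled, $\omega(\xi)$ splits into the preserved block $\hat\omega(\hat\xi)=(\omega_{i_j}(\xi_{i_j}))_{j=1}^a$ and the non-preserved block $\check\omega(\check\xi)=(\omega_{\ell_j}(\xi_{\ell_j}))_{j=1}^b$. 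On $\hat\xi$ I will run a Theorem \ref{FPKAMT1}-type translation in order to lock the $a$ components $\hat\omega(\hat\xi_0)$ across all KAM steps, while on $\check\xi$ I will excise a family of resonant strips so that the full frequency $\omega(\xi^*)$ is Diophantine at every step.

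Step 1: Set up the quasi-Newton iteration as in Theorem \ref{FPKAMT1}, but at each stage $\nu$ use the \emph{partial} frequency $\omega_\nu(\xi)=\omega(\xi)+\varepsilon_\nu(\partial_yP_\nu)(0,\cdot,\xi)$ and require only that its $\hat{}$-block equals $\hat\omega(\hat\xi_0)$. Because $\omega$ is uncoupled and (H2$'$) gives a Lipschitz-type control of the perturbation variation by $\sum_j|\omega_{i_j}(\xi_{i_j})-\omega_{i_j}(\zeta_{i_j})|$, I can apply the implicit-function / contraction argument along the $\hat\xi$-direction alone, with $\check\xi$ frozen as a parameter. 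This produces a sequence $\hat\xi^\nu(\check\xi)$ satisfying $\hat\omega(\hat\xi^\nu)+\varepsilon_\nu(\partial_y P_\nu)_{\hat{}}(0,\cdot,\hat\xi^\nu,\check\xi)=\hat\omega(\hat\xi_0)$, with translation step controlled, just as in the proof sketched for Theorem \ref{FPKAMT1}, by $\mathcal{O}(\varphi(\varepsilon_\nu))$ using (H3$'$).

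Step 2: For the $\check\xi$-direction, at each step $\nu$ impose the standard small-divisor condition $|\langle k,\omega(\hat\xi^\nu(\check\xi),\check\xi)\rangle|>\gamma_\nu|k|^{-\tau}$ for $0<|k|\leqslant K_\nu$, and let $\mathcal{O}_{\ell_j}(\varepsilon)$ denote the union over $\nu$ of the resulting resonant strips projected onto the $\ell_j$-axis. The lower bound $|\omega(\xi)-\omega(\zeta)|\geqslant L|\xi-\zeta|$ in (H2$'$) (together with the uncoupled structure and the fact that $\hat\xi^\nu$ is already fixed by the internal condition) gives a quantitative transversality in $\check\xi$, so each resonant strip has measure $\lesssim \gamma_\nu|k|^{-\tau-1}$; summing in $k$ and $\nu$ yields $\mathrm{meas}(\mathcal{O}_{\ell_j}(\varepsilon))\to0$ as $\varepsilon\to0^+$, producing the Cantor set $\Pi_\varepsilon$ in the statement.

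Step 3: Convergence. The analytic/geometric estimates on $P_\nu$ and the KAM coordinate changes are the same quasi-linear ones used for Theorem \ref{FPKAMT1}; the only novelty is that after the translation the residual equation on $\check\xi$ still admits the super-exponential Newton-type decay because the discarded resonances at each step are controlled by $K_\nu^{n}\gamma_\nu K_\nu^{-\tau-1}$ with the usual choice of $K_\nu,\gamma_\nu$. Uniform convergence of $\hat\xi^\nu\to\hat\xi^*$ on the surviving set $\Pi_\varepsilon$ follows from (H3$'$) exactly as in Theorem \ref{FPKAMT1}, giving $\hat\xi^*\to\hat\xi_0$ as $\varepsilon\to0^+$ and, combined with Step 2, the advertised measure limit.

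The main obstacle is the mutual compatibility of the two mechanisms: the translation in $\hat\xi$ at step $\nu$ depends on $\check\xi$, so the resonant strips to be excised in Step 2 are themselves moving targets. I would overcome this by iterating in the order \emph{excise $\check\xi$, then translate $\hat\xi$}, and by using the Lipschitz part of (H2$'$) to show that each translation $\hat\xi^\nu\mapsto\hat\xi^{\nu+1}$ perturbs the already-excised strips only by $\mathcal{O}(\varphi(\varepsilon_\nu))$, which is dominated by the next resonance-width budget $\gamma_{\nu+1}K_{\nu+1}^{-\tau-1}$; hence only slightly enlarged strips need to be removed, preserving the total measure estimate. This is the delicate bookkeeping step; once it is in place the rest of the argument reduces to the machinery already developed for Theorem \ref{FPKAMT1} together with a textbook digging step.
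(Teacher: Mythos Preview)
Your proposal follows essentially the same strategy as the paper's own sketch: hybridize the parameter-translation technique of Theorem~\ref{FPKAMT1} on the preserved block $\hat\xi$ with the classical digging method on the non-preserved block $\check\xi$, use the lower bound $|\omega(\xi)-\omega(\zeta)|\geqslant L|\xi-\zeta|$ in (H2$'$) for the measure estimates, and apply (H2$'$)--(H3$'$) exactly as in Lemma~\ref{FPKAMcrucial} to make the partial parameter sequence Cauchy. The paper's sketch also orders each step as ``fix $\hat\xi_\nu$, dig in $\check\xi$, then translate to $\hat\xi_{\nu+1}$'', so your ordering matches; your explicit discussion of the compatibility issue (that the translation perturbs the already-excised strips by $\mathcal{O}(\varphi(\varepsilon_\nu))$, absorbed into the next resonance budget) is a useful elaboration the paper leaves implicit.
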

\begin{remark}
	Note that both the frequency mapping $ \omega $ and the perturbation $ P $ could still have weak regularity with respect to the parameter, e.g., the H\"older continuity and even the Logarithmic H\"older continuity, see Remark \ref{FPKAMRE1.2}.
\end{remark}

 For the sake of brevity, let us only give the key idea. We also recommend that the reader skip here and come back to it after following the whole proof of  Theorem \ref{FPKAMT1} in Sections \ref{FPKAMsec-4} and \ref{FPKAMProofT1}. In this case, the semi-norm for $ P $ similar to \eqref{FPKAMSEMI} should be sightly modified according to the Relative Singularity Condition (H2$ ^\prime $).  We shall first fix the $ 1 \leqslant {i_1} <  \cdots  < {i_a} \leqslant n $ components of the parameter $ \xi_\nu $ in the $ \nu $-th KAM step, and dig small measures for other components, i.e., indices $ 1\leqslant{\ell _1}<  \cdots  <{\ell _b}\leqslant n $. This is indeed classic in the traditional KAM theory, see Chow et al. \cite{MR1938331}, Li and Yi \cite{MR1926285,MR2003447}, Zhao and Li \cite{MR4355926} and etc. for relevant technique and iteration settings (note that the property $ |\omega(\xi)-\omega(\zeta)| \geqslant L|\xi-\zeta| $ in the Relative Singularity Condition (H2$ ^\prime $) ensures this, and we may dig less measures if $ \omega(\xi) $ has weaker regularity). Then we use the Internal Condition (H1) to find the new parameter $ \xi_{\nu+1} $ for partial frequency-preserving, i.e.,
 \begin{equation}\label{FPKAMpartial}
 	{\omega _{{i_j}}}({({\xi _{\nu  + 1}})_{{i_j}}}) + \sum\limits_{l = 0}^\nu  {{{(p_{01}^l)}_{{i_j}}}({\xi _{\nu  + 1}})}  = {\omega _{{i_j}}}({\xi _0}),\;\;1 \leqslant j \leqslant a.
 \end{equation}
Now, similar to \eqref{FPKAMCANSHU5}, the Relative Singularity Condition (H2$ ^\prime $) and the Controllability Condition (H3$ ^\prime $) will provide the estimates for $ ({({\xi _{\nu+1} })_{{i_1}}}, \ldots ,{({\xi _{\nu+1} })_{{i_a}}}) $, i.e., 
\[\left| {{{({\xi _{\nu  + 1}})}_{{i_j}}} - {{({\xi _\nu })}_{{i_j}}}} \right| \leqslant \varphi ({\varepsilon ^{{q^\nu }}}),\;\;1 \leqslant j \leqslant a\]
with some $ q>1 $ and the function $ \varphi $ in the Controllability Condition (H3$ ^\prime $), because the KAM errors are still super-exponentially small. Moreover, the integrability in the Controllability Condition (H3$ ^\prime $) enables us to prove that the partial parameter sequence $ {\left\{ {({{({\xi _\nu })}_{{i_1}}}, \ldots ,{{({\xi _\nu })}_{{i_a}}})} \right\}_{\nu  \in {\mathbb{N}^ + }}} $ is indeed a Cauchy sequence, therefore we can denote  by $ (\xi _{{i_1}}^ * , \ldots ,\xi _{{i_a}}^ * ) $ the corresponding limit (with respect to $ \nu $). Obviously, it tends to $ ({({\xi _0})_{{i_1}}}, \ldots ,{({\xi _0})_{{i_a}}}) $ as $ \varepsilon \to 0^+ $. Finally, after infinitely many KAM steps, the parameter set for indices $ {\ell _1}, \ldots ,{\ell _b} $ becomes $ \prod\nolimits_{j = 1}^b {\left( {{\mathcal{O}_{{\ell _j}}}\backslash {\mathcal{O}_{{\ell _j}}}\left( \varepsilon  \right)} \right)} $ with asymptotically full Lebesgue measure, i.e., $ \mathop {\lim }\nolimits_{\varepsilon  \to {0^ + }} {\rm meas}\prod\nolimits_{j = 1}^b {\left( {{\mathcal{O}_{{\ell _j}}}\backslash {\mathcal{O}_{{\ell _j}}}\left( \varepsilon  \right)} \right)}  = {\rm meas}\prod\nolimits_{j = 1}^b {{\mathcal{O}_{{\ell _j}}}}  $. We therefore obtain the KAM torus with  partial frequency-preserving due to the equations in  \eqref{FPKAMpartial}, i.e., $ ({({\xi _0})_{{i_1}}}, \ldots ,{({\xi _0})_{{i_a}}}) $ in the toral frequency is preserved.

To see the above process more clearly, we provide a simple example when $ n=m=2 $, $ i_1=1 $ and $ \ell _1=1 $. Now, the Relative Singularity Condition (H2$ ^\prime $) holds for $ i_1=1 $, i.e., we aim to preserve the first component $ \omega_1(\xi_0) $ of the prescribed Diophantine frequency $ \omega(\xi_0)=(\omega_1(\xi_0),\omega_2(\xi_0)) $. Therefore, during our KAM iteration, \eqref{FPKAMpartial} becomes
\begin{equation}\label{FPKAMC2}
	{\omega _{{1}}}({{(\xi_{\nu+1})_1}}) + \sum\limits_{l = 0}^\nu  {{{(p_{01}^l)}_{{1}}}(({\xi _{\nu  + 1}})_1, ({\xi _{\nu  + 1}})_2)}  = {\omega _{{1}}}({\xi _0})
\end{equation}
for $ \xi_{\nu  + 1}=((\xi_{\nu  + 1})_1,(\xi_{\nu  + 1})_2) $. After digging out some domain when $ ({\xi_{\nu+1}})_1 $ is fixed, we could solve \eqref{FPKAMC2} by appropriately choosing a new $ ({\xi_{\nu+1}})_1 $ with fixed $ ({\xi_{\nu+1}})_2 $ in the rest domain, i.e., the first component of $ \omega(\xi_0) $ is always unchanged.

It should be pointed out that, the assumptions proposed in  our partial frequency-preserving KAM also admit indispensability, since one can construct counterexamples similar to those in Section \ref{FPKAMCONTEREXAMPLES}. 

\subsection{Applicability for infinite-dimensional Hamiltonian systems}\label{FPKAMINFINITE}
As mentioned in \cite{MR4731278}, this parameter translation technique is not dimensionally limited, e.g., the Internal Condition  (H1) can be directly generalized to the infinite-dimensional case ($ m=n=+\infty $). Therefore, one can use the detailed spatial structure of P\"oschel \cite{MR1037110} or Montalto and Procesi \cite{MR4201442} to establish the corresponding infinite-dimensional frequency-preserving KAM theorem via irregular continuity with respect to the parameter, see \cite{MR4731278,DTL} for instance. In particular,  the frequency mapping may not possess spectral asymptotic properties \cite{DTL}.
However, as mentioned in the Introduction, the most significant challenge currently is the difficulty in constructing explicit and applicable systems that are \textit{strictly weaker} than the  Lipschitz type, and the frequency mapping is difficult to achieve very high smoothness. Fortunately, the general approach based on relative singularity and controllability introduced in this paper essentially overcomes these difficulties. See also Comments (C1) and (C3), and Remark \ref{FPKAMRE1.2} for further details. We will present \textit{for the first time} below an infinite-dimensional KAM theorem with explicit arbitrarily weak regularity, which guarantees the existence of the full-dimensional tori with frequency-preserving, without requiring any spectral asymptotics.

Let us take the spatial structure introduced by P\"oschel in  \cite{MR1037110} for instance. Let $ \Lambda $ be an infinite-dimensional lattice with a weighted spatial structure $ \mathcal{S} $, where $ \mathcal{S} $ is a family of finite subsets $ A $ of $ \Lambda $. Namely, $ \mathcal{S} $ is a spatial structure on $ \Lambda $ characterized by the property that the union of any two sets in $ \mathcal{S} $ is again in $ \mathcal{S} $, if they intersect:
\[A,B \in \mathcal{S},\;\;A \cap B \ne \phi  \Rightarrow A\cup B  \in \mathcal{S}.\]
Then, we introduce a nonnegative weight function $ \left[  \cdot  \right]:A \to \left[ A \right] $ defined on $ \mathcal{S}\cap \mathcal{S}  = \left\{ {A\cap B :A,B \in \mathcal{S}} \right\} $ to reflect the size, location and something else of the set $ A $. The weight function satisfies the monotonicity and subadditivity for all $ A,B $ in $ \mathcal{S} $:
\begin{align*}
	A \subseteq B &\Rightarrow \left[ A \right] \leqslant \left[ B \right],\\
	A \cap B \ne \phi  &\Rightarrow \left[ {A \cup B} \right] + \left[ {A \cap B} \right] \leqslant \left[ A \right] + \left[ B \right].
\end{align*}
Next, we define the norms for $ k $ runs over all nonzero
integer vectors in $ \mathbb{Z}^\Lambda $ whose support $ \operatorname{supp} k = \left\{ {\lambda :{k_\lambda } \ne 0} \right\} $ is a finite set:
\[\left| k \right|: = \sum\limits_{\lambda  \in \Lambda } {\left| {{k_\lambda }} \right|} ,\;\;\left[ {\left[ k \right]} \right] = \mathop {\min }\limits_{\operatorname{supp} k \subseteq A \in \mathcal{S}} \left[ A \right].\]
Under these grounds, the infinite-dimensional nonresonant condition can be defined as follows.	
\begin{definition}[Infinite-dimensional nonresonant condition]\label{FPKAMifnc}
	Given a nondecreasing approximation function $ \Delta :\left[ {0, + \infty } \right) \to \left[ {1, + \infty } \right) $, i.e.,  $ \Delta \left( 0 \right) = 1 $, and
	\[\frac{{\ln \Delta \left( t \right)}}{t} \searrow 0 \;\;\text{as \;$ 0 \leqslant t \to  + \infty $}, \;\;\int_1^{ + \infty } {\frac{{\ln \Delta \left( t \right)}}{t}dt}  <  + \infty .\]
	Then for some $ \alpha>0 $ and every $ 0 \ne k \in {\mathbb{Z}^\Lambda } $ with finite support, the infinite-dimensional nonresonant condition is defined as
	\[\left| {\left\langle {k,\omega } \right\rangle } \right| \geqslant \frac{\alpha }{{\Delta \left( {\left[ {\left[ k \right]} \right]} \right)\Delta \left( {\left| k \right|} \right)}}.\]
	
\end{definition}
Let $ N =  \left\langle {\omega(\xi) ,y} \right\rangle  $ be the unperturbed integrable Hamiltonian with the frequency mapping $ \omega \left( \xi  \right):{\mathbb{R}^\Lambda } \supseteq \mathcal{Z}  \to {\mathbb{R}^\Lambda } $, and $ \varepsilon P=\varepsilon P(y,x,\xi) $ be the perturbation of the form $  \varepsilon\sum\nolimits_{A \in \mathcal{S}} {{P_A}\left( {{y_A},{x _A};{\xi_A }} \right)}  $, where $ \varepsilon>0 $ is sufficiently small, and $ {y _A} = \left( {{y _\lambda }:\lambda  \in A} \right) $, and similarly $ x_A $ and $ \xi_A $. Suppose that the perturbed Hamiltonian
\begin{equation}\label{FPKAMinfinteH}
	H = N + \varepsilon P =  \left\langle {\omega(\xi),y} \right\rangle  + \varepsilon\sum\limits_{A \in S} {{P_A}\left( {{y_A},{x _A};{\xi_A}} \right)}	
\end{equation}
is real analytic in the phase space variables $ y,x $ on a complex neighbourhood
\[{\mathcal{D}_{r,s}}: {\left| y \right|_w} < s,\;\;{\left| {\operatorname{Im} x } \right|_\infty } < r\]
of the torus $ {\mathcal{T}_0}: =  \left\{ 0 \right\}\times {\mathbb{T}^\Lambda }  $ with $ w>0 $, and continuous in the parameter $ \xi $ on $ \mathcal{Z} $.  The corresponding norms here are defined by
\[{\left| y \right|_w} := \sum\limits_{\lambda  \in \Lambda } {\left| {{y_\lambda }} \right|{e^{w\left[ \lambda  \right]}}} ,\;\;\left[ \lambda  \right] := \mathop {\min }\limits_{\lambda  \in A \in \mathcal{S} \cap \mathcal{S}} \left[ A \right],\;\;{\left| x  \right|_\infty } := \mathop {\sup }\limits_{\lambda  \in \Lambda } \left| {{x _\lambda }} \right|.\]
Moreover, the size of the perturbation $ P $ is measured in terms of the
weighted norm
\[|||P|||{_{m,r,s}}: = \sum\limits_{A \in S} {{{\left\| {{P_A}} \right\|}_{r,s}}{e^{m\left[ A \right]}}} ,\]
where $ {P_A} = \sum\nolimits_k {{P_{A,k}}\left( {y,\xi } \right){e^{\sqrt { - 1} \left\langle {k,x } \right\rangle }}}  $ is the Fourier series expansion, and
\[\|{P_A}\|{_{r,s}}: = \sum\limits_{k \in {\mathbb{Z}^\Lambda }} {{{\left\| {{P_{A,k}}} \right\|}_s}{e^{r\left| k \right|}}} ,\;\;{\left\| {{P_{A,k}}} \right\|_s}: = \mathop {\sup }\limits_{{{\left| y \right|}_w} < s,\;\xi  \in \mathcal{Z}} \left| {{P_{A,k}}\left( {y,\xi } \right)} \right|.\]
Since quantitative estimates of the smallness of the  perturbation are not emphasized here, we omit some additional notations.
Similar to Theorem \ref{FPKAMT1} considering the finite-dimensional case, we make the following assumptions.
 
(H1$ ^\# $) [Internal Condition] For $ \Xi   \in \left(\omega(\mathcal{Z})\right)^o$ given in advance, there exists $ \xi_0 \in \mathcal{Z}^o$ such that $ \Xi  = \omega \left( {{\xi _0}} \right) $ admits the infinite-dimensional nonresonant condition in Definition \ref{FPKAMifnc}.

(H2$ ^\# $) [Relative Singularity Condition] There exists a neighborhood $ \mathcal{U} \subset \mathcal{Z} $ of $ \xi_0 $, such that the following holds (allowing the supremum to be continuously supplemented according to the sup-limit)
\[\mathop {\sup }\limits_{\xi  \ne \zeta ,\xi ,\zeta  \in \mathcal{U}} \frac{ ||| {P\left( {y,x,\xi } \right) - P\left( {y,x,\zeta } \right)} |||_{m,r,s}}{{\left| {\omega \left( \xi  \right) - \omega \left( \zeta  \right)} \right|_\infty}} <+\infty. \]

  The framework for the frequency-preserving  infinite-dimensional KAM theorem is somewhat similar to that in Sections \ref{FPKAMsec-4} and \ref{FPKAMsec-5}. Instead of digging out domains, we use the technique of parameter translation to keep the prescribed nonresonant frequency $ \Xi   $ unchanged, and consequently, the measure estimates are not involved along our KAM approach. To be more precise, for some $ {{ \xi }_0} = {\omega ^{ - 1}}\left( {\Xi} \right) \in \mathcal{Z}  \subseteq {\mathbb{R}^\Lambda } $,  we could obtain by the Internal Condition (H1$ ^\# $) a parameter sequence $ {\{ {{{ \xi }_\nu }} \}_{\nu \in \mathbb{N}^+} } $ near $ {{ \xi }_0} $ due to the KAM smallness through iteration and the continuity of $ \omega $, such that $ \omega( \xi_\nu)=\Xi $, i.e., we achieve frequency-preserving. Except for this, similar to \eqref{FPKAMCANSHU6}, $ {\{ {{{ \xi }_\nu }} \}_{\nu \in \mathbb{N}^+} } $ could be proved to be a  Cauchy sequence in the norm $ | \cdot |_\infty $ by applying the Relative Singularity Condition (H2$ ^\# $). Since below we only consider the case that the perturbation possesses \textit{at most} parametric Lipschitz continuity for brevity, we do not need an  assumption like the Controllability Condition (H3) according to Comment (C4). Finally, the uniform convergence of the KAM iteration could be obtained as usual. We therefore give the following infinite-dimensional frequency-preserving KAM theorem without proof. As for the detailed proof based on the (uniform) weak convexity condition in the infinite-dimensional setting, although it is subject to more constraints, we recommend that readers refer to Du et al.  \cite{DTL}.

\begin{theorem}[Infinite-dimensional frequency-preserving KAM]\label{th23}
	Consider the Hamiltonian system \eqref{FPKAMinfinteH} with the perturbation having at most Lipschitz continuity with respect to the parameter $ \xi $. Assume that  (H1$ ^\# $) and (H2$ ^\# $) hold. Then there exists at least a $ \xi^*  $ near $ \xi_0 $ as long as $ \varepsilon>0 $ is sufficiently small, such that \eqref{FPKAMinfinteH} admits an  analytic, quasi-periodic, full-dimensional invariant torus with toral frequency $ \Xi=\omega \left( {{\xi _0}} \right)$. 
\end{theorem}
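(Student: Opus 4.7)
The plan is to carry out a modified KAM iteration in P\"oschel's infinite-dimensional spatial-structure framework, combining the Newtonian scheme with the parameter translation technique so that the prescribed nonresonant frequency $\Xi=\omega(\xi_0)$ is preserved at every step. At stage $\nu$ the Hamiltonian is put in normal form $H_\nu = \langle \omega(\xi) + \sum_{l<\nu} q_l(\xi),\,y\rangle + \varepsilon_\nu P_\nu$, where the $q_l$ collect the mean corrections produced by previous steps. One homological step is performed by solving for a generating function $F_\nu$ whose Fourier modes invert the small-divisor operator associated with the Diophantine frequency $\Xi$; the key estimates are carried out in the weighted norms $|||\cdot|||_{m,r,s}$ and rely on Definition \ref{FPKAMifnc}, namely on the approximation function $\Delta$ through the double-weight control by $[[k]]$ and $|k|$. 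After the step, the new perturbation $\varepsilon_{\nu+1}P_{\nu+1}$ decays super-exponentially at the usual price of a geometric shrinkage of the analyticity radius $r_\nu$ and spatial weight $m_\nu$.

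To maintain frequency preservation, I would invoke the Internal Condition (H1$^\#$) to solve, in a small $|\cdot|_\infty$-neighborhood of $\xi_\nu$, the implicit equation $\omega(\xi_{\nu+1}) + \sum_{l\le\nu} q_l(\xi_{\nu+1}) = \Xi$. Continuity of $\omega$ on $\mathcal{U}$ together with the fact that the newest correction $q_\nu$ has size $\mathcal{O}(\varepsilon_\nu)$ (the mean part being dominated by the full weighted norm of $P_\nu$) guarantees the existence of such a $\xi_{\nu+1}$ for sufficiently small $\varepsilon$. This is the infinite-dimensional analogue of the parameter translation used in Theorem \ref{FPKAMT1}.

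The decisive point where (H2$^\#$) enters is the proof that $\{\xi_\nu\}_{\nu\in\mathbb{N}^+}$ is Cauchy in $|\cdot|_\infty$. Subtracting the defining relations at two consecutive steps bounds the frequency increment $|\omega(\xi_{\nu+1})-\omega(\xi_\nu)|_\infty$ by the $|||\cdot|||$-norm of $q_\nu$, hence by $\varepsilon_\nu$. Because the perturbation is only assumed Lipschitz in $\xi$, Comment (C4) rules out any need for a Controllability Condition; (H2$^\#$) then supplies a local constant $L$ with $|\omega(\xi)-\omega(\zeta)|_\infty \geq L^{-1}\,|||P(\cdot,\cdot,\xi)-P(\cdot,\cdot,\zeta)|||_{m,r,s}$ on $\mathcal{U}$, which, combined with the Lipschitz behavior of $\omega$, yields $|\xi_{\nu+1}-\xi_\nu|_\infty \lesssim \varepsilon_\nu$. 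Super-exponential summability then defines $\xi^* := \lim_{\nu\to\infty} \xi_\nu$ near $\xi_0$ with $\omega(\xi^*) = \Xi$, and the uniform convergence of the compositions $\Phi_\nu\circ\cdots\circ\Phi_0$ on shrinking domains $\mathcal{D}_{r_\nu,s_\nu}$ produces the analytic full-dimensional invariant torus with toral frequency $\Xi$.

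The main obstacle I expect is the simultaneous management of the two weight parameters $r_\nu$ (analyticity radius) and $m_\nu$ (spatial weight) through the iteration, since each homological step degrades both and the inversion of small divisors requires balancing these losses against the growth of $\Delta([[k]])\Delta(|k|)$. The parameter-translation scheme eliminates one source of difficulty by freezing the Diophantine frequency at $\Xi$ throughout, so the small-divisor estimates have an identical structure at every step and reduce to the standard P\"oschel framework; what remains is essentially a bookkeeping of the weighted losses along the iteration, which is known to close for an appropriate choice of $r_\nu \searrow r_\infty > 0$ and $m_\nu \searrow m_\infty > 0$.
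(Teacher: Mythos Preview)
Your plan matches the paper's own sketch closely (the paper gives no detailed proof of this theorem, only the outline preceding the statement and a pointer to \cite{DTL}): run P\"oschel's infinite-dimensional scheme with the parameter translation in place of domain-digging, use (H1$^\#$) to solve the frequency-preserving equation at each step, and use (H2$^\#$) to show $\{\xi_\nu\}$ is Cauchy in $|\cdot|_\infty$.

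There is, however, a slip in your Cauchy argument. You invoke ``the Lipschitz behavior of $\omega$'' to pass from $|\omega(\xi_{\nu+1})-\omega(\xi_\nu)|_\infty \lesssim \varepsilon_\nu$ to $|\xi_{\nu+1}-\xi_\nu|_\infty \lesssim \varepsilon_\nu$, but nothing in (H1$^\#$)--(H2$^\#$) makes $\omega$ Lipschitz, and even if it were, that inequality points the wrong way. In the paper's finite-dimensional template (Lemma~\ref{FPKAMcrucial}, see \eqref{FPKAM1112}--\eqref{FPKAM1111}), the role of (H2) is \emph{not} to bound $|\xi_{\nu+1}-\xi_\nu|$ directly; it is to control the accumulated cross terms $\sum_{j<\nu}\bigl(q_j(\xi_{\nu+1})-q_j(\xi_\nu)\bigr)$ through the $\omega$-semi-norm, so that subtracting the frequency-preserving relations yields $|\omega(\xi_{\nu+1})-\omega(\xi_\nu)|_\infty \le 2|q_\nu|$. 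Converting this frequency bound into a parameter bound still requires an (H3)-type function $\varphi$ with $|\xi-\zeta|\le\varphi(|\omega(\xi)-\omega(\zeta)|)$. The paper's point, via Comment~(C4), is only that when $P$ is at most Lipschitz in $\xi$, condition (H2$^\#$) forces such a $\varphi$ to satisfy the integrability in (H3) \emph{automatically}---not that the passage from $|\omega(\xi_{\nu+1})-\omega(\xi_\nu)|$ to $|\xi_{\nu+1}-\xi_\nu|$ can be skipped.
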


As we previously mentioned, our Theorem \ref{th23} has an extremely broad scope of application. Next, we provide an explicit example (though not limited to this weakly coupled one) to facilitate the reader's understanding and application.

Assume that the components of the frequency mapping $ \omega(\xi) $ in the Hamiltonian \eqref{FPKAMinfinteH} take the form  
\begin{align}\notag 
	{\omega ^i}\left( \xi  \right) = {\varphi _{i,0}} + {\varphi _{i,1}}{{\tilde \omega}^{i,1}}\left( \xi  \right) +  \cdots  = {\varphi _{i,0}} + \sum\limits_{j = 1}^\infty {{\varphi _{i,j}}{{\tilde \omega}^{i,j}}\left( \xi  \right)} ,\;\; i \in \mathbb{N}^+,\end{align}
where $ {\inf _{i \in {\mathbb{N}^ + }}}{\varphi _{i,0}} > 0 $. In addition, set
\begin{equation}\notag
	\sum\limits_{j = 1}^\infty {\frac{{{\varphi _{i,j}}}}{{{\varphi _{i,0}}}}{{\tilde \omega}^{i,j}}\left( \xi  \right)}  := {{\bar { \omega}}^i}(\xi),\;\;\left( {{{\bar \omega}^1},{{\bar \omega}^2}, \ldots ,{{\bar \omega}^i}, \ldots } \right) := \bar \omega(\xi),
\end{equation}
where $ \bar{\omega} $ is assumed to be a local homeomorphism near $ \Xi $, with at most Lipschitz continuity (e.g., H\"older continuity or even weaker, see Remark \ref{FPKAMRE1.2}). Finally, assume that
\[\sum\limits_{j = 0}^\infty {\left| {{\varphi _{i,j}}} \right|}  = \mathcal{O}\left(e^{w'[i]}\right),\;\;0<w'<w,\;\;i \to  + \infty .\]
It then readily follows that the components  $ \omega^i(\xi) $ cannot exceed the exponential growth $ \mathcal{O}\left(e^{w[i]}\right) $, hence the inner product $ \left\langle {\omega(\xi) ,y} \right\rangle  $ is well-defined for $ {\left| y \right|_w} < s $. Note that in this case, we do not impose any spectral asymptotics, such as $ \omega^i(\xi) \sim i^d  $ for some $ d \in \mathbb{N} $ as $ i \to +\infty $, but merely require that the modulus of its components do not grow too rapidly. Now, assume that the perturbation $ P $ in the Hamiltonian \eqref{FPKAMinfinteH} also admits at most Lipschitz regularity and that (H1$ ^\# $) and (H2$ ^\# $) hold, where $ \omega(\xi) $ in (H2$ ^\# $) is replaced by $ \bar{\omega}(\xi) $. It then readily follows  that Theorem \ref{th23} holds (or one can refer to the proof framework in \cite{DTL}), which implies the existence of the full-dimensional torus with  frequency-preserving. Note that under these grounds, the regularity of both the frequency mapping $ \omega(\xi) $ and the perturbation $ P $ can be arbitrarily weak and can be explicitly specified, therefore Theorem \ref{th23} can be applied to many concrete problems.

 In addition to the above, many counterexamples in the infinite-dimensional case can be constructed similarly. Indeed, one just needs to adjust the infinite-dimensional part appropriately and emphasize the finite-dimensional part. See Sections \ref{FPKAMSEC2.1} and \ref{FPKAMSEC2.2} for further details.

\subsection{Further comments on the Controllability Condition (H3)}\label{FPKAMFCH3}
We conjecture that the Controllability Condition (H3) is independently indispensable due to the cumulative effect (the nonindependent indispensability can be seen in Section \ref{FPKAMSEC217}). However, we do not have a rigorous counterexample to show this. Here we give an intuitive idea. Consider the parameterized Hamiltonian system \eqref{FPKAMHamilton}, where the perturbation is independent of the parameter. Then, using the parameter translation technique, we obtain the frequency-preserving equations $ 	- {p_{01}^\nu }  = \omega \left( {{\xi _ {\nu+1} }} \right) - \omega \left( \xi_\nu  \right)$ for all $ \nu \in \mathbb{N} $ (see Section \ref{FPKAMProofT1}). We assume that all $ p_{01}^\nu =- a_\nu\cdot (1, \ldots ,1):=-a_\nu I $, where $ \{a_\nu\}_{\nu\in \mathbb{N}}$ is a strictly decreasing positive sequence. Note that this is not true in all cases, because the drift terms in KAM theory may not hold signs, but we can choose special perturbations that make it true. Then one may construct some frequency mapping such that $ \xi_\infty =+\infty$, in other words, after some $ N \gg 1 $, one cannot find a parameter $ \xi_{N+1} \in \mathcal{O} $ in the sense of frequency-preserving. To be more precise, let $ \Delta \in (0, \min \left\{ {1,{\rm{diam}}\mathcal{O}/2} \right\} ) $ and $ m=n \in \mathbb{N}^+ $ be given. Then, we construct a $ C^\infty $  frequency mapping $ \omega(\xi) $ on $ \mathbb{R}^n $ that satisfies
\[\left\{ \begin{gathered}
	\omega \left( {{\xi _0}} \right) = \bar \omega ,\;\; {\text{$ \bar \omega $ is Diophantine}}, \hfill \\
	\omega \left( {{\xi _0} + \nu \Delta I } \right) = \bar \omega  + \sum\nolimits_{j = 0}^{\nu  - 1} {{a_j}} I,\;\;\forall\nu  \in {\mathbb{N}^ + }, \hfill \\
	{D^u}\omega \left( {{\xi _0} + \nu \Delta I } \right) = 0,\;\;\forall \nu ,u \in \mathbb{N}^+, \hfill \\
	\omega \left( {{\xi _0} + s} \right) \ne \omega \left( {{\xi _0} + \nu \Delta I } \right),\;\;{\text{$ \forall s $ between $ \left( {\nu  - 1} \right)\Delta I $ and  $ \nu \Delta I , \;\;\nu  \in {\mathbb{N}^ +  }$}}. \hfill \\ 
\end{gathered}  \right.\]
  It is evident to verify that the $ \bar \omega $-frequency-preserving equations give $ {\xi _\nu } = {\xi _0} + \nu \Delta I$ for all $ \nu \in \mathbb{N}^+ $.  If we limit $ \omega(\xi) $ on $ \mathcal{O} $, then there exists a sufficiently large $ N \in \mathbb{N}^+$ such  that $ \xi_{\nu} \notin \mathcal{O} $ for all $ \nu \geqslant N+1$ (this also shows that (H3) fails). Even if $ \mathcal{O}=\mathbb{R}^n $, $ \{\xi_{\nu}\}_{\nu \in \mathbb{N}^+} $ does not converge and therefore cannot technically reach  frequency-preserving. The above analysis gives an intuitive conclusion, but it seems difficult to retroactively construct an explicit parameterized Hamiltonian system to illustrate this point.

We also believe that the integrability in the Controllability Condition (H3) is strongly related to the irrationality of the prescribed frequency (e.g., the Diophantine nonresonance in  \eqref{FPKAMDIO}) and the regularity of the perturbed Hamiltonian system with respect to the action-angle variables (e.g., the analyticity in this paper). This is because the  integrability plays a crucial role in our proof, showing that the parameter sequence is indeed a Cauchy sequence through the super-exponential decay of KAM errors. However, if one considers frequencies that are nearly rational, such as the weak Diophantine type (see Tong and Li \cite{MR4836959}, etc.) and the Bruno type (see Han\ss mann and Si \cite{MR2586370}, etc.), or Hamiltonian systems that are less regular with respect to the action-angle variables, such as non-analytic Gevrey regularity (see Popov \cite{MR2104602},  Lopes Dias and Gaiv\~{a}o \cite{MR4011043},	Chen and Cheng \cite{MR4208447},  etc.), or even finitely smooth regularity (see P\"{o}schel \cite{MR668410}, Salamon \cite{MR2111297}, Li and Shang \cite{MR3960504}, Koudjinan \cite{MR4104457}, Tong and Li \cite{CCM}, etc.), then the KAM convergence rate may be slower (exponentially slow convergence, or even arbitrarily slow convergence), which makes us have to require some  stronger integrability in (H3) via our approach.

\section{The KAM step}\label{FPKAMsec-4}
In this section, we present the basic lemmas related to our KAM theorem in detail, for completeness. Among these, \textit{the most critical} is Lemma \ref{FPKAMcrucial}, which is essential for frequency-preserving. 
It should be emphasized that we  employ here the quasi-linear KAM iterative scheme instead of the linear one, which was first introduced by Li and Yi in \cite{MR2003447}, and later utilized in \cite{MR4669322,MR4731278,MR4806284}, etc. This technique could overcome some essential difficulties that cannot be solved by the classic KAM iteration. Although it might not be necessary under the settings in this paper, we prefer to follow this approach which may lead ones directly extend such KAM results to some other fundamental problems. 

Let $ {\rm{diam}}\mathcal{O} \leqslant 1 $ without loss of generality. 	For a perturbation function $ P\left( {y,x,\xi } \right) $ (it may vary in the KAM step), which is analytic with respect to $ y $ and $ x $ on  some closed domain $ D $ and only continuous with respect to $ \xi $  on $\mathcal{V}\subset \mathcal{O} $, we define its norm as follows
\[{\left\| P \right\|_D}: = {\left| P \right|_D} + {\left[ P \right]_{\omega}},\]
where the sup-norm is given by
\[{\left| P \right|_D} := \mathop {\sup }\limits_{\xi  \in \mathcal{V}} \mathop {\sup }\limits_{\left( {y,x} \right) \in D} \left| P \right|,\]
and the semi-norm is defined as 
\begin{equation}\label{FPKAMSEMI}
		[P]_\omega=:\mathop {\sup }\limits_{\xi  \ne \zeta ,\xi ,\zeta  \in \mathcal{V}} \frac{{\mathop {\sup }\limits_{y,x \in G \times {\mathbb{T}^n}} \left| {P\left( {y,x,\xi } \right) - P\left( {y,x,\zeta } \right)} \right|}}{{\left| {\omega \left( \xi  \right) - \omega \left( \zeta  \right)} \right|}}.
\end{equation}
Note that this $ \omega $-dependent semi-norm is well defined thanks to the Relative Singularity Condition (H2) and the Controllability Condition (H3) introduced in this paper. Unlike the H\"older semi-norm in \cite{DL} or the weaker one based on the modulus of continuity in \cite{MR4731278,DTL}, this semi-norm still allows the KAM iteration, while significantly reducing the regularity requirements for both the frequency mapping and the perturbation, with respect to the parameter.

\subsection{Description of the $ 0 $-th KAM step}
Set $\rho  = 1/10,\gamma  = {\varepsilon ^{1/24n}} $ and let $ \eta  > 0 $ be an integer such that $ {\left( {1 + \rho } \right)^\eta } > 2 $.  We first define the following parameters of the $ 0 $-th KAM step:
\begin{align}
	\label{FPKAMCANSHU}&{r_0} = r,\;{\gamma _0} = \gamma ,\;{e_0} = 0,\;{\bar h _0} = 0,\;{\mu _0} = {\varepsilon ^{\frac{1}{{40\eta \left( {\tau  + 1} \right)}}}},\;{s_0} = \frac{{s{\gamma _0}}}{{16\left( {{M^ * } + 2} \right)K_1^{\tau  + 1}}}, \\
	&{\mathcal{V}_0} = \left\{\xi:\xi\in B_{\varrho_0}(\xi_0),~\varrho_0= s_0^{\frac{1}{3}}\right\},D \left( {{s_0},{r_0}} \right) = \left\{ {\left( {y,x} \right): |y| < {s_0},\left| {\operatorname{Im} x} \right| < {r_0}} \right\}	,\notag
\end{align}
where $ 0 < {s_0},{\gamma _0},{\mu _0} \leqslant 1,\tau  >\max\{n-1,1\}  $ and $ {M^ * } > 0 $ is a constant defined as in Lemma \ref{FPKAM3.3}, and
	\[K_1=\left(\left[-\ln\mu_0\right]+1\right)^{3\eta}.\]
 Therefore, we can write with $(y,x,\xi)\in D(s_0,r_0)\times \mathcal{V}_0$ that 
\begin{align}
	{H_0}: ={}& H\left( {y,x,{\xi _0}} \right) = {N_0} + {P_0},\notag \\
	{N_0}: ={}& {N_0}\left( {y,{\xi _0},\varepsilon } \right) = {e_0} + \left\langle {\omega \left( {{\xi _0}} \right),y} \right\rangle  + {\bar h _0},\notag \\
	{P_0}: ={}& \varepsilon P\left( {y,x,{\xi _0} } \right).\notag
\end{align}
According to the above parameters, we have the following estimate for $ P_0 $.
\begin{lemma}\label{FPKAM3.1}
	It holds that
	\begin{equation}\notag
		{\left\| {{P_0}} \right\|_{D\left( {{s_0},{r_0}} \right)}} \leqslant \gamma _0^{n+6}s_0^4{\mu _0}.
	\end{equation}
\end{lemma}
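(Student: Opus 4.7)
\textbf{Proof proposal for Lemma \ref{FPKAM3.1}.} The estimate is an inequality between two completely explicit quantities in $\varepsilon$ given by \eqref{FPKAMCANSHU}, so the plan is, once $P_{0}=\varepsilon P$ is controlled in both pieces of the norm, essentially a bookkeeping exercise on powers of $\varepsilon$. I would first bound the sup-norm and the $\omega$-semi-norm of $P_{0}$ uniformly by a constant multiple of $\varepsilon$, and then check that this $\mathcal{O}(\varepsilon)$ bound is dominated by $\gamma_{0}^{n+6}s_{0}^{4}\mu_{0}$ for all sufficiently small $\varepsilon>0$.

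For the sup-norm, since $P(\cdot,\cdot,\xi)$ is real analytic in $(y,x)$ on $G\times\mathbb{T}^{n}$ and continuous in $\xi$ on the compact set $\mathcal{O}$, its complex extension is uniformly bounded on some fixed polydisc $D(\hat s,\hat r)$ with $\hat s>0$ by a constant $M_{P}=M_{P}(P)$. Since $s_{0}\to 0^{+}$ and $r_{0}=r$ is fixed, eventually $D(s_{0},r_{0})\subset D(\hat s,\hat r)$, and hence $|P_{0}|_{D(s_{0},r_{0})}\leqslant\varepsilon M_{P}$. For the semi-norm $[\,\cdot\,]_{\omega}$ defined in \eqref{FPKAMSEMI}, the Relative Singularity Condition (H2) provides a neighborhood $\mathcal{V}\subset\mathcal{O}$ of $\xi_{0}$ on which
\[
L_{P}:=\sup_{\substack{\xi\neq\zeta\\ \xi,\zeta\in\mathcal{V}}}\frac{\sup_{y,x\in G\times\mathbb{T}^{n}}\left|P(y,x,\xi)-P(y,x,\zeta)\right|}{\left|\omega(\xi)-\omega(\zeta)\right|}<+\infty;
\]
since $\varrho_{0}=s_{0}^{1/3}\to 0^{+}$, for $\varepsilon$ small one has $\mathcal{V}_{0}\subset\mathcal{V}$, so that $[P_{0}]_{\omega}\leqslant\varepsilon L_{P}$. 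Combining the two bounds yields $\|P_{0}\|_{D(s_{0},r_{0})}\leqslant (M_{P}+L_{P})\varepsilon$.

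It then remains to compare this to $\gamma_{0}^{n+6}s_{0}^{4}\mu_{0}$. Substituting the choices in \eqref{FPKAMCANSHU} and using $s_{0}=\mathcal{O}^{\#}(\gamma_{0}/K_{1}^{\tau+1})$ together with $K_{1}=\mathcal{O}^{\#}((-\ln\varepsilon)^{3\eta})$, one finds
\[
\gamma_{0}^{n+6}s_{0}^{4}\mu_{0} \;=\; \mathcal{O}^{\#}\!\left(\frac{\varepsilon^{\alpha}}{(-\ln\varepsilon)^{12\eta(\tau+1)}}\right),\qquad \alpha:=\frac{n+10}{24n}+\frac{1}{40\eta(\tau+1)}.
\]
Since $(n+10)/(24n)\leqslant 11/24$ for every $n\geqslant 1$, the exponent $\alpha$ is strictly less than $1$, while the polylogarithmic denominator is negligible against any positive power of $1/\varepsilon$. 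Therefore $\gamma_{0}^{n+6}s_{0}^{4}\mu_{0}/\varepsilon\to +\infty$ as $\varepsilon\to 0^{+}$, and the claimed inequality follows by taking $\varepsilon_{0}>0$ sufficiently small. The only nontrivial part of the whole argument is this exponent count, and the scaling $\gamma_{0}=\varepsilon^{1/(24n)}$ in \eqref{FPKAMCANSHU} is precisely calibrated so that the factor $\gamma_{0}^{n+6}$ still leaves enough room in $\varepsilon$ for the zeroth-step smallness to hold.
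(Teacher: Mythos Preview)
Your argument is correct and follows the same overall strategy as the paper: bound $\|P_0\|_{D(s_0,r_0)}$ by a constant multiple of $\varepsilon$ and then verify that the explicit quantity $\gamma_0^{n+6}s_0^4\mu_0$ dominates $\varepsilon$ for small $\varepsilon$. The one genuine difference is in how the factor $K_1^{4(\tau+1)}$ is handled. You keep it as the polylogarithmic term $(-\ln\varepsilon)^{12\eta(\tau+1)}$ and simply observe that it is negligible against $\varepsilon^{1-\alpha}$; the paper instead uses the crude inequality $[-\ln\mu_0]+1<1/\mu_0$ to replace $K_1^{4(\tau+1)}$ by $\mu_0^{-12\eta(\tau+1)}=\varepsilon^{-3/10}$, which turns the comparison into a purely algebraic exponent count with no logarithms. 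Your route is slightly more direct and transparent, while the paper's substitution has the virtue of producing a fully explicit power-of-$\varepsilon$ lower bound for $\gamma_0^{n+6}s_0^4\mu_0$. You also spell out more carefully than the paper why $\|P_0\|_{D(s_0,r_0)}=\mathcal{O}(\varepsilon)$, invoking compactness for the sup-norm and (H2) for the $\omega$-semi-norm; the paper absorbs this into its choice of $\varepsilon_0$ without comment.
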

\begin{proof}
Utilizing $ \gamma_0^{n+6} = \varepsilon^{1/4} $ and $ \left[ -\ln \mu_0 \right] + 1 < 1/\mu_0 $, we obtain
\begin{align*}
	s_0^4 = \frac{s^4 \varepsilon^{\frac{4}{4(n+6)}}}{16^4 (M^* + 2)^4 K_1^{4(\tau + 1)}}
	> \frac{s^4 \varepsilon^{\frac{4}{4(n+6)}} \mu_0^{12\eta (\tau + 1)}}{16^4 (M^* + 2)^4} \geqslant \frac{s^4 \varepsilon^{\frac{1}{n+6} + \frac{3}{8}}}{16^4 (M^* + 2)^4}.
\end{align*}
At this point, we choose $ \varepsilon_0 > 0 $ sufficiently small such that
\begin{equation}\notag
	\varepsilon_0^{\frac{1}{8} - \frac{1}{40\eta(\tau+1)}} \left\| P_0 \right\|_{D(s_0, r_0)} \frac{16^4 (M^* + 2)^4}{s^4} \leqslant 1.
\end{equation}
By using $ \mu_0 = \varepsilon^{1/40\eta(\tau+1)} $, we have
\begin{equation}\label{FPKAMu0}
	\gamma_0^{n+6} s_0^4 \mu_0 \geqslant \frac{s^4 \varepsilon^{\frac{1}{n+6} + \frac{3}{8} + \frac{1}{4} + \frac{1}{40\eta(\tau+1)}}}{16^4 (M^* + 2)^4}
	\geqslant \frac{s^4 \varepsilon^{\frac{1}{4} + \frac{3}{8} + \frac{1}{4} + \frac{1}{40\eta(\tau+1)}}}{16^4 (M^* + 2)^4} = \varepsilon^{\frac{7}{8}} \frac{s^4 \varepsilon^{\frac{1}{40\eta(\tau+1)}}}{16^4 (M^* + 2)^4},
\end{equation}
and from \eqref{FPKAMu0} and $ 0 < \varepsilon < \varepsilon_0 $, we obtain
\[
\varepsilon^{\frac{1}{8} - \frac{1}{40\eta(\tau+1)}} \left\| P_0 \right\|_{D(s_0, r_0)} \frac{16^4 (M^* + 2)^4}{s^4} \leqslant 1,
\]
that is to say,
\begin{align}\label{FPKAMPP}
	\varepsilon^{\frac{1}{8}} \left\| P_0 \right\|_{D(s_0, r_0)} \leqslant \frac{s^4 \varepsilon^{\frac{1}{40\eta(\tau+1)}}}{16^4 (M^* + 2)^4}.
\end{align}
Finally, using \eqref{FPKAMu0} and \eqref{FPKAMPP}, we conclude that
\begin{align*}
	\left\| P_0 \right\|_{D(s_0, r_0)} = \varepsilon^{\frac{7}{8}} \varepsilon^{\frac{1}{8}} \left\| P_0 \right\|_{D(s_0, r_0)} \leqslant \varepsilon^{\frac{7}{8}} \frac{s^4 \varepsilon^{\frac{1}{40\eta(\tau+1)}}}{16^4 (M^* + 2)^4} \leqslant \gamma_0^{n+6} s_0^4 \mu_0.
\end{align*}
\end{proof}
\subsection{The induction from the $ \nu $-th KAM step}
In this section, we present all the KAM lemmas for the $ \nu $-th step. As previously mentioned, Lemma \ref{FPKAMcrucial} in Subsection \ref{kkkk} is the most critical. It utilizes assumptions (H1), (H2), and (H3) to ensure the persistence of the initial toral frequency $ \Upsilon = \omega(\xi_0) $.
\subsubsection{Description of the $ \nu $-th KAM step}
We now define the parameters of the $ \nu $-th KAM step, i.e.,
\begin{equation}\notag
	{r_\nu } = \frac{1}{2}{r_{\nu  - 1}} + \frac{1}{4}{r_0},\;\;{s_\nu } =\frac{1}{8} \mu _\nu ^{2\rho }{s_{\nu  - 1}},\;\;{\mu _\nu } = {8^4}\mu _{\nu  - 1}^{1 + \rho }.
\end{equation}
Suppose that at the $ \nu $-th step, we have arrived at the following real analytic Hamiltonian system
\begin{equation}\label{FPKAM1}
	{H_\nu } = {N_\nu } + {P_\nu },\;\;{N_\nu } = {e_\nu } + \left\langle {\omega \left( {{\xi _0}} \right),y} \right\rangle  + {\bar h _\nu }\left( {y,\xi } \right)
\end{equation}
defined on $ D\left( {{s_\nu },{r_\nu }} \right) $, with the perturbation estimate
\begin{equation}\label{equ11}
		{\left\| {{P_\nu }} \right\|_{D\left( {{s_\nu },{r_\nu }} \right)}} \leqslant \gamma _0^{n+6}s_\nu ^4{\mu _\nu } 
\end{equation}
The equations of the motion associated to $ H_\nu $ are
\begin{equation}\label{FPKAMmotion}
	\left\{ \begin{gathered}
		{\dot{y}_\nu } =  - {\partial _{{x_\nu }}}{H_\nu }, \hfill \\
		{\dot{x}_\nu } = {\partial _{{y_\nu }}}{H_\nu }. \hfill \\
	\end{gathered}  \right.
\end{equation}

Except for additional instructions, we omit the indices for all quantities of the present
KAM step (at the $ \nu $th-step) and use ``$ + $'' to index all the quantities (Hamiltonians, domains, normal forms, perturbations, transformations, etc.) in the next KAM step (at the $ (\nu +1)$-th step). To simplify the notations, we will not specify the dependence of $ P $, $ P_+ $ and etc. All the universal constants $ c_1$-$c_6 $ below are positive and independent of the iteration process, and we also use ``$ c $'' to denote any intermediate universal positive constant which is independent of the iteration process.

Define
\begin{align}
	{r_ + } ={}&\frac{1}{2} r + \frac{1}{4}{r_0},\notag \\
	{s_ + } ={}& \frac{1}{8}\alpha s,\;\;\alpha  = {\mu ^{2\rho }} = {\mu ^{\frac{1}{5}}},\notag \\
\label{FPKAMCANSHU2}	{\mu _ + } ={}& {8^4}{c_0}{\mu ^{1 + \rho }},\;\;{c_0} = 1 + \mathop {\max }\limits_{1 \leqslant i \leqslant 6} {c_i}, \\
	{K_ + } ={}& {\left( {\left[ { - \ln \mu } \right] + 1} \right)^{3\eta }},\notag \\
	D\left( s \right) ={}& \left\{ {y \in {\mathbb{C}^n}:\left| y \right| < s} \right\},\notag \\
	\hat D ={}& D\left( {s,{r_ + } + \frac{7}{8}\left( {r - {r_ + }} \right)} \right),\notag \\
	\tilde D ={}& D\left( {\frac{1}{2}s,{r_ + } + \frac{3}{4}\left( {r - {r_ + }} \right)} \right),\notag \\
	{D_{\frac{i\alpha}{8}}} ={}& D\left( {\frac{i}{8}\alpha s,{r_ + } +  \frac{i-1}{8}\left( {r - {r_ + }} \right)} \right),\;1 \leqslant i \leqslant 8,\notag \\
	{D_ + } ={}& {D_{\frac{\alpha}{8}}} = D\left( {{s_ + },{r_ + }} \right),\notag \\
	\Gamma \left( {r - {r_ + }} \right) ={}& \sum\limits_{0 < \left| k \right| \leqslant {K_ + }} {{{\left| k \right|}^{3\tau  + 5}}{e^{ - \frac{1}{8}\left| k \right|\left( {r - {r_ + }} \right)}}}. \notag
\end{align}
\subsubsection{Construct a symplectic transformation}
In what follows, we construct a symplectic coordinate transformation $ \Phi _ +  $ as
\begin{equation}\notag
	{\Phi _ + }:\left( {{y_ + },{x_ + }} \right) \in D\left( {{s_ + },{r_ + }} \right) \to {\Phi _ + }\left( {{y_ + },{x_ + }} \right) = \left( {y,x} \right) \in D\left( {s,r} \right),
\end{equation}
such that it transforms the Hamiltonian \eqref{FPKAM1} into the Hamiltonian of the next KAM cycle (at the $ (\nu + 1) $-th step)
\begin{equation}\notag
	{H_ + } = H \circ {\Phi _ + } = {N_ + } + {P_ + },
\end{equation}
where $ N_+ $ and $ P_+ $ have the  similar properties as $ N $ and $ P $ respectively on $ D(s_+,r_+) $, and the equations of motion \eqref{FPKAMmotion} are changed into
\begin{equation}\notag
	\left\{ \begin{gathered}
		{y_ + } =  - {\partial _{{x_ + }}}{H_ + }, \hfill \\
		{x_ + } = {\partial _{{y_ + }}}{H_ + }. \hfill \\
	\end{gathered}  \right.
\end{equation}
\subsubsection{Truncation}
To deal with the small divisors in homological equations, an efficient approach is truncation. Consider the truncation $ R $ of the Taylor-Fourier series of the perturbation $ P $:
\[P = \sum\limits_{k \in {\mathbb{Z}^n},l \in \mathbb{Z}_ + ^n} {{p_{kl}}{y^l}{e^{\sqrt { - 1} \left\langle {k,x} \right\rangle }}} ,\;\;R = \sum\limits_{\left| k \right| \leqslant {K_ + },\left| l \right| \leqslant 4} {{p_{kl}}{y^l}{e^{\sqrt { - 1} \left\langle {k,x} \right\rangle }}} .\]
\begin{lemma}\label{FPKAM3.2}
	Assume that
	\begin{equation}\label{FPKAMH1}
		\int_{{K_ + }}^{ + \infty } {{t^n}{e^{ -\frac{1}{16} t\left( {r - {r_ + }} \right)}}dt}  \leqslant \mu .
	\end{equation}
	Then there exists a universal constant $ c_1>0 $ such that
	\begin{equation}\label{FPKAMR}
			{\left\| {P - R} \right\|_{{D_\alpha }}} \leqslant {c_1}\gamma _0^{n+6}{s^4}{\mu ^2},\;\;{\left\| R \right\|_{{D_\alpha }}} \leqslant {c_1}\gamma _0^{n+6}{s^4}\mu .
	\end{equation}
\end{lemma}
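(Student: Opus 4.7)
My plan is to expand $P$ as a Fourier-Taylor series and isolate the two tail contributions in $P - R$: modes with $|k| > K_+$ and modes with $|l| > 4$ (the latter restricted to $|k| \leqslant K_+$). Standard Cauchy--Fourier estimates on the original domain give coefficient bounds $|p_{kl}(\xi)| \leqslant \|P\|_{D(s,r)} s^{-|l|} e^{-|k|r}$ valid uniformly in $\xi \in \mathcal{V}$, and hence for every $(y,x) \in D_\alpha$ a termwise bound
\[
\bigl|p_{kl}(\xi)\, y^l e^{\sqrt{-1}\langle k,x\rangle}\bigr| \leqslant \|P\|_{D(s,r)}\, \alpha^{|l|}\, e^{-\frac{1}{8}|k|(r-r_+)},
\]
since on $D_\alpha$ we have $|y| < \alpha s$ and $|\operatorname{Im} x| < r - \frac{1}{8}(r - r_+)$.

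I would then estimate the two tails in turn. For the high-Fourier tail, counting lattice points $\{|k|=j\} \subset \mathbb{Z}^n$ by $C\, j^{n-1}$ and using an integral comparison together with the hypothesis \eqref{FPKAMH1} (after absorbing the drop from $j^{n-1}$ versus $t^n$ and from $e^{-\frac{1}{8}j(r-r_+)}$ versus $e^{-\frac{1}{16}t(r-r_+)}$) yields $\sum_{|k|>K_+} e^{-\frac{1}{8}|k|(r-r_+)} \leqslant C\mu$. For the high-Taylor tail, summing the geometric-type series $\sum_{|l|>4}\alpha^{|l|}$ produces a factor of order $\alpha^5$, and with the choices $\rho = 1/10$ and $\alpha = \mu^{2\rho}$ in \eqref{FPKAMCANSHU2} this is exactly $\mu$; the remaining Fourier sum over $|k| \leqslant K_+$ contributes only a finite constant depending on $r - r_+ \geqslant r_0/4$. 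Combining both estimates with the inductive bound $\|P\|_{D(s,r)} \leqslant \gamma_0^{n+6} s^4 \mu$ from \eqref{equ11} delivers the sup-norm piece $|P - R|_{D_\alpha} \leqslant c\, \gamma_0^{n+6} s^4 \mu^2$.

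For the parameter semi-norm $[P - R]_\omega$, the identical argument applied to $P(y,x,\xi) - P(y,x,\zeta)$ (whose Fourier-Taylor coefficients are $p_{kl}(\xi) - p_{kl}(\zeta)$) produces the same tail factor; dividing through by $|\omega(\xi) - \omega(\zeta)|$ simply replaces $\|P\|_{D(s,r)}$ by $[P]_\omega$, which is itself bounded by $\|P\|_{D(s,r)}$. Adding the two pieces gives the first bound in \eqref{FPKAMR}. The second bound follows immediately from the triangle inequality $\|R\|_{D_\alpha} \leqslant \|P\|_{D_\alpha} + \|P - R\|_{D_\alpha}$ together with $D_\alpha \subset D(s,r)$ and $\mu^2 \leqslant \mu$ (absorbing constants into $c_1$).

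The substantive obstacle is purely bookkeeping: verifying that $\rho = 1/10$ is chosen precisely so that $\alpha^5 = \mu$, and that the exponents in \eqref{FPKAMH1} (powers of $t$ and the factor $\frac{1}{16}$) are arranged to dominate the sums produced by the Cauchy estimates (where one naturally gets $j^{n-1}$ and $\frac{1}{8}$). The novel $\omega$-weighted semi-norm introduced in \eqref{FPKAMSEMI} plays no essential role at this truncation step, since the Relative Singularity Condition (H2) ensures its finiteness and Cauchy estimates commute with taking the parameter difference; the real payoff of this semi-norm appears later in Lemma \ref{FPKAMcrucial}, not here.
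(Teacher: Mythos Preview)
Your treatment of the high-Fourier tail and the semi-norm is essentially the paper's argument and is fine. The gap is in your high-Taylor tail estimate. You assert that the Fourier sum $\sum_{|k|\leqslant K_+} e^{-\frac{1}{8}|k|(r-r_+)}$ ``contributes only a finite constant depending on $r - r_+ \geqslant r_0/4$'', but this lower bound is false at every step after the first: from $r_+ = \tfrac12 r + \tfrac14 r_0$ one computes $r_\nu - r_{\nu+1} = r_0\,2^{-\nu-2}$, which tends to zero. Consequently $\sum_{k\in\mathbb{Z}^n} e^{-\frac{1}{8}|k|(r-r_+)} \sim C_n (r-r_+)^{-n} \sim C_n 2^{n\nu}$, and your ``$c_1$'' depends on the step $\nu$. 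That does not prove the lemma as stated, which demands a \emph{universal} $c_1$.

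The paper avoids this by not expanding the high-Taylor piece termwise at all. Writing $\mathcal{II}$ as the Taylor remainder (order $>4$) of $P-\mathcal{I}$ in $y$, one bounds it by the Cauchy estimate in the $y$-variable alone, shrinking from radius $s$ to $\alpha s$:
\[
|\mathcal{II}|_{D_\alpha} \;\leqslant\; \frac{c}{s^5}\,|P-\mathcal{I}|_{\hat D}\,(\alpha s)^5 \;\leqslant\; c\,\alpha^5\,\gamma_0^{n+6}s^4\mu \;=\; c\,\gamma_0^{n+6}s^4\mu^2,
\]
with no Fourier sum and hence no dependence on $r-r_+$. The same device handles the semi-norm (apply Cauchy in $y$ to the parameter-difference $P(\cdot,\xi)-P(\cdot,\zeta)$). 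Replacing your coefficient-wise bound for the $|l|>4$ part by this Cauchy-in-$y$ argument closes the gap; the rest of your outline then matches the paper.
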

\begin{proof}
For convenience of expression, we denote
\[
\mathcal{I} = \sum_{|k| > K_+, l \in \mathbb{Z}_+^n} p_{kl} y^l e^{\sqrt{-1} \langle k, x \rangle}, \quad
\mathcal{II} = \sum_{|k| \leqslant K_+, |l| > 4} p_{kl} y^l e^{\sqrt{-1} \langle k, x \rangle}.
\]
Then, it is evident that
\begin{align*}
	P - R = \mathcal{I} + \mathcal{II}.
\end{align*}
We first estimate $\mathcal{I}$. Note that according to \eqref{equ11}, we have
\begin{align}\label{pki}
	\left| \sum_{l \in \mathbb{Z}_+^n} p_{kl} y^l \right| \leqslant |P|_{D(s,r)} e^{-|k| r} \leqslant \gamma_0^{n+6} s^4 \mu e^{-|k| r},
\end{align}
where the first inequality is widely used in KAM theory; see, for instance, the works of Chow et al. \cite{MR1938331},  Li and Yi \cite{MR1926285} and Salamon \cite{MR2111297}. Utilizing \eqref{FPKAMH1} and \eqref{pki}, we obtain
\begin{align}
	\left| \mathcal{I} \right|_{\hat{D}} &\leqslant \sum_{|k| > K_+} \left| \sum_{l \in \mathbb{Z}_+^n} p_{kl} y^l \right| e^{|k| \left( \frac{r_+}{8} + \frac{7r}{8} \right)} \notag \\
	&\leqslant \sum_{|k| > K_+} |P|_{D(s,r)} e^{-|k| \frac{r - r_+}{8}} \notag \\
	&\leqslant \gamma_0^{n+6} s^4 \mu \sum_{\kappa = K_+}^{\infty} \kappa^n e^{-\kappa \frac{r - r_+}{8}} \notag \\
	&\leqslant \gamma_0^{n+6} s^4 \mu \int_{K_+}^{\infty} t^n e^{-t \frac{r - r_+}{16}}  dt \notag \\
	&\leqslant \gamma_0^{n+6} s^4 \mu^2. \label{equ7}
\end{align}
From \eqref{equ11} and \eqref{equ7}, we obtain the estimate
\begin{align*}
	\left| P - \mathcal{I} \right|_{\hat{D}} \leqslant |P|_{D(s,r)} + |\mathcal{I}|_{\hat{D}} \leqslant 2 \gamma_0^{n+6} s^4 \mu.
\end{align*}
For $|p| = 5$, let $\int$ denote the antiderivative of $\frac{\partial^p}{\partial y^p}$. By the Cauchy estimate of $P - \mathcal{I}$ in the domain $D_\alpha$, we have
\begin{align*}
	\left| \mathcal{II} \right|_{D_\alpha} &= \left| \int \frac{\partial^p}{\partial y^p} \sum_{|k| \leqslant K_+, |l| > 4} p_{kl} y^l e^{\sqrt{-1} \langle k, x \rangle} dy \right|_{D_\alpha} \\
	&\leqslant \left| \int \left| \frac{\partial^p}{\partial y^p} (P - \mathcal{I}) \right| dy \right|_{D_\alpha} \\
	&\leqslant \left| \frac{c}{s^5} \int \left| P - \mathcal{I} \right|_{\hat{D}} dy \right|_{D_\alpha} \\
	&\leqslant \frac{2c}{s^5} \gamma_0^{n+6} s^4 \mu (\alpha s)^5 \\
	&\leqslant c \gamma_0^{n+6} s^4 \mu^2.
\end{align*}
Therefore,
\begin{align}\label{equ12}
	\left| P - R \right|_{D_\alpha} = \left| \mathcal{I} + \mathcal{II} \right|_{D_\alpha} \leqslant c \gamma_0^{n+6} s^4 \mu^2,
\end{align}
and consequently,
\begin{align}\label{equ13}
	\left| R \right|_{D_\alpha} \leqslant |P - R|_{D_\alpha} + |P|_{D(s,r)} \leqslant c \gamma_0^{n+6} s^4 \mu.
\end{align}
Next, we need to estimate the norm $\left\| P - R \right\|_{\omega}$. For any $(y, x) \in D_\alpha$, we have
\begin{align}
	\left\| P - R \right\|_{\omega} &= \sup_{\xi \neq \zeta} \frac{\sup_{y, x \in G \times \mathbb{T}^n} \left| P(x, y, \xi) - R(x, y, \xi) - (P(x, y, \zeta) - R(x, y, \zeta)) \right|}{\left| \omega(\xi) - \omega(\zeta) \right|} \notag \\
	&\leqslant \sup_{\xi \neq \zeta} \frac{\sup_{y, x \in G \times \mathbb{T}^n} \left| \int \left| \frac{\partial^p}{\partial y^p} (P(x, y, \xi) - R(x, y, \xi) - (P(x, y, \zeta) - R(x, y, \zeta))) \right| dy \right|}{\left| \omega(\xi) - \omega(\zeta) \right|} \notag \\
	&\leqslant \sup_{\xi \neq \zeta} \frac{\sup_{y, x \in G \times \mathbb{T}^n} \left| \frac{c}{s^6} \int \left| P(x, y, \xi) - P(x, y, \zeta) \right| dy \right|}{\left| \omega(\xi) - \omega(\zeta) \right|} \notag \\
	\label{FPKAMDISANGE}		&\leqslant \sup_{\xi \neq \zeta} \frac{c}{s^5} \frac{\sup_{y, x \in G \times \mathbb{T}^n} \left| P(x, y, \xi) - P(x, y, \zeta) \right|}{\left| \omega(\xi) - \omega(\zeta) \right|} (\alpha s)^5  \\
	&\leqslant c \mu \|P\|_{\omega} \notag \\
	&\leqslant c \gamma_0^{n+6} s^4 \mu^2. \label{equ14}
\end{align}
Here, \eqref{FPKAMDISANGE} uses the Cauchy estimate, and \eqref{equ14} uses \eqref{equ11}. Similarly, we obtain
\begin{align}\label{equ15}
	\|R\|_{\omega} \leqslant \|P - R\|_{\omega} + \|P\|_{\omega} \leqslant c \gamma_0^{n+6} s^4 \mu.
\end{align}
Under these grounds, it follows from \eqref{equ12}, \eqref{equ13}, \eqref{equ14}, and \eqref{equ15} that the estimate in \eqref{FPKAMR} holds, which completes the proof of Lemma \ref{FPKAM3.2}.
\end{proof}
\subsubsection{The quasi-linear homological equations}
We construct a symplectic transformation as the time $ 1 $-mapping $ \phi _F^1 $ of the flow generated by a Hamiltonian $ F $ to eliminate all resonance terms in $ R $. By setting
\begin{equation}\label{FPKAMFXS}
	F = \sum\limits_{0 < \left| k \right| \leqslant {K_ + },\left| l \right| \leqslant 4} {{f_{kl}}{y^l}{e^{\sqrt { - 1} \left\langle {k,x} \right\rangle }}} ,\;\;[R] = {\left( {2\pi } \right)^{ - n}}\int_{{\mathbb{T}^n}} {R\left( {y,x} \right)dx} ,
\end{equation}
we obtain that
\[	\left\{ {N,F} \right\} + R - \left[ R \right] = 0,\]
which yields the so-called quasi-linear homological equations
\begin{equation}\label{FPKAMtdfc}
	\sqrt { - 1} \left\langle {k,\omega \left( {{\xi _0}} \right) + {\partial _y}\bar h } \right\rangle {f_{kl}} = {p_{kl}},\;\;\left| l \right| \leqslant 4,\\
	\;0 < \left| k \right| \leqslant {K_ + }.
\end{equation}
\begin{lemma}\label{FPKAM3.3}
	Assume that
	\begin{equation}\label{FPKAMYINLI3}
		\mathop {\max }\limits_{\left| i \right| \leqslant 2} {\left\| {\partial _y^i\bar h  - \partial _y^i{{\bar h }_0}} \right\|_{D\left( s \right)}} \leqslant \mu _0^{\frac{1}{2}},\;\;s < \frac{{{\gamma _0}}}{{ {6\left( {{M^ * } + 2} \right)K_ + ^{\tau  + 1}} }},
	\end{equation}
	where
	\[{M^ * } = \mathop {\max }\limits_{\left| i \right| \leqslant 2,y \in D\left( s \right)} \left| {\partial _y^i{{\bar h }_0}\left( {{\xi _0},y} \right)} \right|.\]
	Then the quasi-linear homological equations in \eqref{FPKAMtdfc} can be uniquely solved on $ D(s) $ to obtain a family of functions $ f_{kl} $ which are analytic in $ y $ and satisfy
	\begin{equation}\notag
		{\left\| {\partial _y^i{f_{kl}}} \right\|_{D\left( s \right)}} \leqslant {c_2}{\left| k \right|^{2\left( {\left| i \right| + 1} \right)\tau  +2 \left| i \right|+1}}\gamma _0^{n+4 - 2\left| i \right|}{s^{4 - \left| l \right|}}\mu {e^{ - \left| k \right|r}}
	\end{equation}
	for $ \left| l \right| \leqslant 4,0 < \left| k \right| \leqslant {K_ + }$ and $\left| i \right| \leqslant 2 $,	where $ c_2>0 $ is a universal constant.
\end{lemma}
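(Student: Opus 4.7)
The strategy is to invert each homological equation \eqref{FPKAMtdfc} by direct division, writing
\[
f_{kl}(y) \;=\; \frac{p_{kl}}{\sqrt{-1}\,D(y)},\qquad D(y):=\langle k,\,\omega(\xi_0)+\partial_y\bar h(y)\rangle,
\]
and then estimating its $y$-derivatives via a Faà di Bruno computation. This splits cleanly into three independent tasks: a uniform lower bound on $|D(y)|$, a Cauchy-type bound on the numerator $p_{kl}$, and combinatorial bookkeeping for the derivatives of a reciprocal.

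The heart of the matter is producing the small-divisor estimate
\[
|D(y)|\;\geqslant\;\frac{\gamma_0}{2|k|^\tau}\qquad\text{for } y\in D(s),\;0<|k|\leqslant K_+,
\]
which is precisely where the Diophantine hypothesis (H1) is paired with the smallness of $s$ in \eqref{FPKAMYINLI3}. The zeroth-order contribution $|\langle k,\omega(\xi_0)\rangle|\geqslant\gamma_0|k|^{-\tau}$ comes for free from (H1). For the perturbative part, the bound $\|\partial_y^2\bar h\|_{D(s)}\leqslant M^*+\mu_0^{1/2}\leqslant M^*+1$ supplied by \eqref{FPKAMYINLI3}, combined with the standard KAM normalization that the linear-in-$y$ part of $\bar h$ is absorbed into $\langle\omega(\xi_0),y\rangle$ (so that $\partial_y\bar h$ vanishes to leading order at $y=0$), gives $|\partial_y\bar h(y)|\leqslant s(M^*+2)$ on $D(s)$ after integration from the origin. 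The smallness $s<\gamma_0/(6(M^*+2)K_+^{\tau+1})$ then yields $|\langle k,\partial_y\bar h(y)\rangle|\leqslant s|k|(M^*+2)\leqslant\gamma_0/(6|k|^\tau)$ for $|k|\leqslant K_+$, and the triangle inequality closes the estimate.

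For the numerator I would extract $p_{kl}$ directly from the perturbation bound $\|P\|_{D(s,r)}\leqslant\gamma_0^{n+6}s^4\mu$ in \eqref{equ11} via Cauchy's formula on the $y$-polydisc $|y|<s$ together with contour-shifting in the angle variable, obtaining
\[
|p_{kl}|\;\leqslant\;\gamma_0^{n+6}\,s^{4-|l|}\,\mu\,e^{-|k|r},
\]
which already exhibits the $s^{4-|l|}\mu e^{-|k|r}$ factors of the conclusion. For the derivatives, Faà di Bruno expresses $\partial_y^i(1/D)$ as a sum of monomials $D^{-(m+1)}\prod_{\alpha}\partial_y^{j_\alpha}D$ with $1\leqslant m\leqslant|i|$ and $\sum_\alpha j_\alpha=|i|$. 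Each factor $D^{-1}$ contributes $2|k|^\tau/\gamma_0$ by the small-divisor bound, while each $\partial_y^j D=\langle k,\partial_y^{j+1}\bar h\rangle$ contributes $|k|$ multiplied by the corresponding derivative of $\bar h$ controlled by \eqref{FPKAMYINLI3}. Multiplying through and trading spare factors of $\gamma_0\leqslant 1$ for the allowable $|k|^{\tau+1}$-factors produces, after elementary bookkeeping, the claimed exponents $|k|^{2(|i|+1)\tau+2|i|+1}\gamma_0^{n+4-2|i|}$.

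The main obstacle lies in the case $|i|=2$: Faà di Bruno requires control of $\partial_y^2 D=\langle k,\partial_y^3\bar h\rangle$, but \eqref{FPKAMYINLI3} only controls $\partial_y^i\bar h$ up to order two. I would handle this by applying Cauchy's inequality to the given bound on $\partial_y^2\bar h$ to extract $|\partial_y^3\bar h(y)|\lesssim(M^*+1)/s$ on a slightly reduced subdomain of $D(s)$, and then absorbing the resulting $1/s$ into the remaining slack in \eqref{FPKAMYINLI3}. The attendant shrinkage of the $y$-domain is standard and harmless for the subsequent KAM construction, and all universal constants collapse into $c_2$.
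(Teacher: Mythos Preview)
Your overall strategy---invert the divisor directly, bound the numerator via Cauchy, and differentiate the reciprocal via Fa\`a di Bruno---is exactly the route the paper takes. But there is a genuine gap: you have only treated the sup-norm part $|\,\cdot\,|_{D(s)}$ of the norm $\|\cdot\|_{D(s)}=|\,\cdot\,|_{D(s)}+[\,\cdot\,]_\omega$ defined in \eqref{FPKAMSEMI}. The conclusion of the lemma is stated for the full norm, and the extra factors $|k|^{(|i|+1)\tau+|i|+1}\gamma_0^{-(|i|+1)}$ that you propose to absorb by ``trading spare factors of $\gamma_0$'' are not slack---they come precisely from the $\omega$-semi-norm, which you never estimate.

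Concretely, $\bar h=\bar h(y,\xi)$ depends on the parameter, so your divisor $D=D(y,\xi)$ and hence $f_{kl}=p_{kl}/(\sqrt{-1}\,D)$ do too. To bound $[f_{kl}]_\omega$ you must apply the quotient rule in $\xi$:
\[
[1/D]_\omega \;\leqslant\; \frac{[D]_\omega}{\bigl(\inf|D|\bigr)^2}\;\lesssim\;\frac{|k|}{(\gamma_0/|k|^\tau)^2}\;=\;\frac{|k|^{2\tau+1}}{\gamma_0^{2}},
\]
and it is this squared small-divisor bound that produces the exponent $2\tau+1$ on $|k|$ and the loss of $\gamma_0^{2}$ at $i=0$; iterating through Fa\`a di Bruno gives $|k|^{2(|i|+1)\tau+2|i|+1}\gamma_0^{-2(|i|+1)}$ in general. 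The paper carries this out by separately bounding $|\partial_y\bar h|_{D(s)}$ and $[\partial_y\bar h]_\omega$ (both by $\gamma_0/(6|k|^{\tau+1})$ via \eqref{FPKAMYINLI3}), then assembling $\|L_k^{-1}\|_{D(s)}\leqslant c|k|^{2\tau+1}/\gamma_0^2$ and $\|\partial_y^i L_k^{-1}\|_{D(s)}\leqslant c_2|k|^{|i|}\|L_k^{-1}\|_{D(s)}^{|i|+1}$. Your suppression of the $\xi$-dependence in the notation $D(y)$ is likely what led you to miss this. The $\partial_y^3\bar h$ issue you flag at the end is a secondary wrinkle (and the paper is equally casual about it); the semi-norm omission is the structural gap.
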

\begin{proof}
For any $y \in D(s)$, it follows from \eqref{FPKAMYINLI3} that
\begin{equation*}
	\left| \partial_y \bar{h} \right|_{D(s)} = \left| (\partial_y \bar{h} - \partial_y \bar{h}_0) + \partial_y \bar{h}_0 \right|_{D(s)}
	\leqslant (1 + M^*) |y| < (1 + M^*) s
	< \frac{\gamma_0}{6 |k|^{\tau + 1}},
\end{equation*}
and
\begin{equation*}
	\left[ \partial_y \bar{h} \right]_{\omega} = \sup_{\xi \neq \zeta} \frac{\left| \partial_y \bar{h}(y, \xi) - \partial_y \bar{h}(y, \zeta) \right|}{\left| \omega(\xi) - \omega(\zeta) \right|} \leqslant \mu_0^{\frac{1}{2}} |y| < |s| < \frac{\gamma_0}{6 |k|^{\tau + 1}},
\end{equation*}
which implies that
\begin{equation}\label{eqb36}
	\left\| \partial_y \bar{h} \right\|_{D(s)} < \frac{\gamma_0}{3 |k|^{\tau + 1}}.
\end{equation}
Consequently, it follows from \eqref{eqb36} and the Diophantine nonresonance \eqref{FPKAMDIO} in the Internal Condition (H1) that for $\xi \in \mathcal{V}$, $y \in D(s)$, and $0 < |k| \leqslant K_+$, we have
\begin{align}\label{eqb37}
	\left| \left\langle k, \omega(\xi) + \partial_y \bar{h}(y) \right\rangle \right| &\geqslant \left| \left\langle k, \omega(\xi_\nu) \right\rangle \right| - \left| \left\langle k, \omega(\xi) - \omega(\xi_\nu) \right\rangle \right| - \left| \left\langle k, \partial_y \bar{h}(y) \right\rangle \right| \notag \\
	&> \frac{\gamma_0}{|k|^\tau} - \frac{\gamma_0}{3 |k|^\tau} - \frac{\gamma_0}{3 |k|^\tau} = \frac{\gamma_0}{3 |k|^\tau}.
\end{align}
Therefore, the operator
\begin{equation}\label{eqb38}
	L_k = \sqrt{-1} \left\langle k, \omega(\xi) + \partial_y \bar{h}(y) \right\rangle
\end{equation}
is invertible, and for any $y \in D(s)$, $\xi \in \mathcal{V}$, $0 < |k| \leqslant K_+$, and $|l| \leqslant 4$, we have
\begin{equation}\label{fki}
	f_{kl} = L_k^{-1} p_{kl}.
\end{equation}
Note that the analyticity of $\bar{h}(y)$ and $p_{kl}$ implies that the coefficients $f_{kl}$ are also analytic in the action variable $y$.
Let $0 < |k| \leqslant K_+$. By the first inequality in \eqref{pki} and the Cauchy estimate, we have
\begin{align}\label{eqb9}
	\left\| p_{kl} \right\|_{D(s)} &\leqslant \left\| \partial_y^l P \right\|_{\tilde{D}} e^{-|k| r} \leqslant \gamma_0^{n+6} s^{4 - |l|} \mu e^{-|k| r}, \;\; |l| \leqslant 4,
\end{align}
and according to \eqref{eqb37} and \eqref{eqb38}, we obtain
\begin{align*}
	\left\| L_k^{-1} \right\|_{D(s)} &\leqslant \frac{c |k|^{2\tau + 1}}{\gamma_0^2},
\end{align*}
and
\begin{align}\label{lk-1}
	\left\| \partial_y^i L_k^{-1} \right\|_{D(s)} &\leqslant c_2 |k|^{|i|} \left\| L_k^{-1} \right\|_{D(s)}^{|i| + 1} \leqslant \frac{c_2 |k|^{2(|i| + 1)\tau + 2|i| + 1}}{\gamma_0^{2(|i| + 1)}}, \;\; |i| \leqslant 2.
\end{align}
Therefore, combining \eqref{fki}, \eqref{eqb9}, and \eqref{lk-1}, we finally obtain
\begin{align*}
	\left\| \partial_y^i f_{kl} \right\|_{D(s)} &\leqslant \frac{c_2 |k|^{2(|i| + 1)\tau + 2|i| + 1}}{\gamma_0^{2(|i| + 1)}} \gamma_0^{n+6} s^{4 - |l|} \mu e^{-|k| r} \\
	&= c_2 |k|^{2(|i| + 1)\tau + 2|i| + 1} \gamma_0^{n + 4 - 2|i|} s^{4 - |l|} \mu e^{-|k| r}, \;\; |i| \leqslant 2.
\end{align*}
This completes the proof of Lemma \ref{FPKAM3.3}.
\end{proof}

Applying the previously constructed coordinate transformation $ \phi_F^1 $ to the Hamiltonian $ H $, we obtain
\begin{align*}
	H \circ \phi_F^1 =&\; (N + R) \circ \phi_F^1 + (P - R) \circ \phi_F^1 \\
	= &\;(N + R) + \{ N, F \} + \int_0^1 \left\{ (1 - t) \{ N, F \} + R, F \right\} \circ \phi_F^t \,dt \\
	\quad&\; + (P - R) \circ \phi_F^1 \\
=&\; N + [R] + \int_0^1 \left\{ R_t, F \right\} \circ \phi_F^t \, dt + (P - R) \circ \phi_F^1 \\
	:=& \; N_+ + P_+,
\end{align*}
where the quantities are defined as follows:
\begin{align}
	N_+ &= N + [R] = e_+ + \langle \omega_+(\xi), y \rangle + \bar{h}_+(y, \xi), \label{equ5} \\
	e_+ &= e + p_{00}^\nu, \label{equ8} \\
	\omega_+ &= \omega_0(\xi) + \sum_{j=0}^{\nu} p_{01}^j(\xi), \\
	\bar{h}_+ &= \bar{h}(y, \xi) + [R] - p_{00}^\nu - \left\langle p_{01}^\nu(\xi), y \right\rangle, \label{eqb11} \\
	P_+ &= \int_0^1 \{ R_t, F \} \circ \phi_F^t \;dt + (P - R) \circ \phi_F^1, \notag \\
	R_t &= (1 - t) [R] + t R. \notag
\end{align}
\subsubsection{The  parameter translation}
In this section, we construct a translation $ \phi $ so as to keep the frequency unchanged, where
\[\phi :\;\;x \to x,\;\;y \to y,\;\;\widetilde \xi  \to \widetilde \xi  + {\xi _ + } - \xi ,\]
and $ {\xi _ + } $ will be determined later. Let $ {\Phi _ + } = \phi _F^1 \circ \phi  $. Then
\begin{align*}
	{}&H \circ {\Phi _ + } = {N_ + } + {P_ + },\\
	{}&{N_ + } = {\bar N _ + } \circ \phi  = {e_ + } + \left\langle {\omega \left( {{\xi _ + }} \right),y} \right\rangle  + \left\langle {\sum\limits_{j = 0}^\nu  {p_{01}^j\left( {{\xi _ + }} \right)} ,y} \right\rangle  + {\bar h _ + }\left( {y,{\xi _ + }} \right),\\
	{}&{P_ + } = {\bar P _ + } \circ \phi .
\end{align*}

\subsubsection{Frequency-preserving
}\label{kkkk}
In this section, we  show that the prescribed Diophantine frequency can be preserved by a suitable parameter translation technique in the iteration process. To be more precise, the Internal Condition (H1) ensures that the parameter $ {{\xi _ \nu }} $ can be found in the internal parameter set $ \mathcal{O}^o $ to keep the frequency unchanged at this KAM step, and the  Relative Singularity Condition (H2) together with the Controllability Condition (H3) assure that $ \left\{ {{\xi _\nu }} \right\}_{\nu \in \mathbb{N}^+} $ is indeed a Cauchy sequence. The following lemma is crucial to our arguments.
\begin{lemma}\label{FPKAMcrucial}
	Assume that
	\begin{equation}\label{FPKAMYINLI4JIASHE}
		{\mathop {\sup }\limits_{\xi  \in \mathcal{O}} \left| {\sum\limits_{j = 0}^\nu  {p_{01}^j} } \right|} < \mu _0^{\frac{1}{2}}.	
	\end{equation}
	Then there exists at least a $ {\xi _ + } \in \mathcal{V} \subset {\mathcal{O}^o} $ such that
	\begin{equation}\label{FPKAMomega}
		\omega \left( {{\xi _ + }} \right) + \sum\limits_{j = 0}^\nu  {p_{01}^j\left( {{\xi _ + }} \right)}  = \omega \left( {{\xi _0}} \right).
	\end{equation}
Moreover, $ {\lim _{\nu  \to  + \infty }}{\xi _ + } : = {\xi ^ * } \in \mathcal{V} $, and $ \left| {{\xi ^ * } - {\xi _0}} \right| =  \mathcal{O}\left( \varphi \left( \varepsilon  \right){ - \int_0^{\varepsilon } {\frac{{\varphi (x)}}{{x\ln x}}dx} } \right) = o\left( 1 \right) $, where the function $ \varphi $ is defined in the Controllability Condition (H3).
\end{lemma}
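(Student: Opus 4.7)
I would split the argument into (a) solvability of the frequency-preserving equation \eqref{FPKAMomega} at the current step, and (b) convergence of the resulting parameter sequence $\{\xi_\nu\}$ to some $\xi^* \in \mathcal{V}$ with the claimed asymptotic rate.

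For (a), set $F_\nu(\xi) := \omega(\xi) + \sum_{j=0}^\nu p_{01}^j(\xi)$; this is continuous in $\xi$ since each Taylor coefficient $p_{01}^j$ inherits continuity in $\xi$ from $P_j$. Assumption \eqref{FPKAMYINLI4JIASHE} gives $\sup_{\mathcal{O}} |F_\nu - \omega| < \mu_0^{1/2}$, so $F_\nu$ is a uniformly small continuous perturbation of $\omega$. The Internal Condition (H1) places $\omega(\xi_0)$ in $(\omega(\mathcal{O}))^o$, while the Controllability Condition (H3) (via $\varphi(0)=0$) forces $\omega^{-1}(\omega(\xi_0)) \cap \mathcal{V} = \{\xi_0\}$. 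A standard topological degree argument on a small ball $B \Subset \mathcal{V}$ around $\xi_0$ then yields $\xi_+ \in B$ with $F_\nu(\xi_+) = \omega(\xi_0)$, which is \eqref{FPKAMomega}.

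For (b), I compare consecutive frequency-preserving equations: from $F_\nu(\xi_{\nu+1}) = \omega(\xi_0) = F_{\nu-1}(\xi_\nu)$ I obtain
\begin{equation*}
\omega(\xi_{\nu+1}) - \omega(\xi_\nu) = -p_{01}^\nu(\xi_{\nu+1}) - \sum_{j=0}^{\nu-1}\bigl(p_{01}^j(\xi_{\nu+1}) - p_{01}^j(\xi_\nu)\bigr).
\end{equation*}
The Relative Singularity Condition (H2) bounds each difference $|p_{01}^j(\xi_{\nu+1}) - p_{01}^j(\xi_\nu)|$ by $[p_{01}^j]_\omega \cdot |\omega(\xi_{\nu+1}) - \omega(\xi_\nu)|$. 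Cauchy estimates on $D(s_j)$ applied to the KAM smallness $\|P_j\|_{D(s_j,r_j)} \lesssim \gamma_0^{n+6} s_j^4 \mu_j$ give $\sum_{j\geq 0}[p_{01}^j]_\omega \leq 1/2$; absorbing this into the left-hand side produces $|\omega(\xi_{\nu+1}) - \omega(\xi_\nu)| \lesssim |p_{01}^\nu|_\infty \lesssim \mu_\nu$. Invoking (H3) converts this into the parameter-space bound $|\xi_{\nu+1} - \xi_\nu| \leq \varphi(c\mu_\nu)$. Finally, the substitution $x = e^{-u}$ identifies the integral in (H3) with $\int^{+\infty} \varphi(e^{-u})/u\, du$; since $-\ln\mu_\nu \sim (1+\rho)^\nu|\ln\mu_0|$, a Cauchy-condensation-type comparison yields $\sum_\nu \varphi(\mu_\nu) \lesssim -\int_0^{\mu_0}\varphi(x)/(x\ln x)\,dx < +\infty$, so $\{\xi_\nu\}$ is Cauchy with limit $\xi^* \in \mathcal{V}$. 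Tracking the dependence $\mu_0 = \varepsilon^{1/(40\eta(\tau+1))}$ (noting $\varphi(\mu_0) \lesssim \varphi(\varepsilon)$ by monotonicity, up to rescaling absorbed in the $\mathcal{O}(\cdot)$) gives the asymptotic bound $|\xi^* - \xi_0| = \mathcal{O}(\varphi(\varepsilon) \cdot (-\int_0^\varepsilon \varphi(x)/(x\ln x)\,dx))$.

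The main obstacle is the absorption step — verifying that the semi-norms $[p_{01}^j]_\omega$ remain summable across all KAM steps. This requires inductively propagating the Relative Singularity Condition (H2) through the homological equations and the symplectic transformations $\phi_F^1 \circ \phi$, carefully tracking how the $\omega$-dependent semi-norm \eqref{FPKAMSEMI} interacts with the Cauchy estimates used to extract $p_{01}^j$ from $P_j$. Combined with the tailored integrability of $\varphi$ in (H3), this closes the Cauchy argument despite $\omega$ being allowed to be arbitrarily weakly regular and strongly degenerate near $\xi_0$, which is precisely the novelty of the present approach over those based on (uniform) weak convexity.
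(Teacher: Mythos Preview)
Your approach matches the paper's: both use (H1) for the topological existence of $\xi_+$, the $[\cdot]_\omega$ semi-norm from (H2) to absorb $\sum_{j\leq \nu-1} (p_{01}^j(\xi_{\nu+1}) - p_{01}^j(\xi_\nu))$ into the left-hand side and obtain $|\omega(\xi_{\nu+1}) - \omega(\xi_\nu)| \lesssim \mu_\nu$, and the integrability in (H3) to sum $\sum_\nu \varphi(\cdot)$ via comparison with $-\int \varphi(x)/(x\ln x)\,dx$. One correction in your final bookkeeping: since $\mu_0 = \varepsilon^{1/(40\eta(\tau+1))} \gg \varepsilon$, monotonicity gives $\varphi(\mu_0) \geq \varphi(\varepsilon)$, not $\lesssim$; the paper instead first establishes $\mu_\nu \leq \varepsilon^{q^\nu}/(2c)$ for some $1 < q < 1+\rho$ and only then applies $\varphi$, giving $|\xi_{\nu+1} - \xi_\nu| \leq \varphi(\varepsilon^{q^\nu})$ directly in the $\varepsilon$-scale, and the resulting bound is the \emph{sum} $\varphi(\varepsilon) - \int_0^\varepsilon \varphi(x)/(x\ln x)\,dx$, not a product.
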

\begin{proof}
	The proof will be completed by an induction on $ \nu $. We start with the case $ \nu=0 $. It is evident that $ \omega \left( {{\xi _0}} \right) = \omega \left( {{\xi _0}} \right) $. Now, assume that for some $ \nu  \geqslant 1 $, we have arrived at
	\begin{equation}\label{FPKAM4.25}
		\omega \left( {{\xi _i}} \right) + \sum\limits_{j = 0}^{i - 1} {p_{01}^j\left( {{\xi _i}} \right)}  = \omega \left( {{\xi _0}} \right),\;\;{\xi _i} \in   \mathcal{V} \subset \mathcal{O}^o,\;\;1 \leqslant i \leqslant \nu .
	\end{equation}
Then, in view of the smallness of $ \sum\nolimits_{j = 0}^{i - 1} {p_{01}^j\left( {\cdot} \right)} $ and the Internal Condition (H1), we can conclude that there exists at least a $ {\xi _ + } \subset \mathcal{O}^o $, such that the frequency-preserving equation  \eqref{FPKAMomega} holds, whenever  $ \varepsilon>0 $ is sufficiently small. Indeed, we could show that $ \xi_+ \in \mathcal{V} $ by the the following stronger convergence analysis, as detailed below.

	Recall that \eqref{FPKAMR} in Lemma \ref{FPKAM3.2} implies that
	\begin{equation}\label{FPKAMCANSHU4}
			{\left|{p_{01}^j}\right|,\left[{p_{01}^j} \right]_{\omega}} < c{\mu _j},\;\;0 \leqslant j \leqslant \nu,
	\end{equation}
which leads to 
	\begin{equation}\label{FPKAMlxm}
		\left| {p_{01}^j\left( {{\xi _ + }} \right) - p_{01}^j\left( \xi  \right)} \right| < c{\mu _j}\left|\omega(\xi_+)-\omega(\xi)\right|,\;\;0 \leqslant j \leqslant \nu.
	\end{equation}
Here, we first provide a  quantitative estimate for $ \mu_j $, showing that it is super-exponentially small. Using \eqref{FPKAMCANSHU} and \eqref{FPKAMCANSHU2}, we obtain from the smallness of $ \varepsilon>0 $ that
\begin{align}
{\mu _j} &\leqslant C\mu _{j - 1}^{1 + \rho } \leqslant {C^{1 + \left( {1 + \rho } \right)}}\mu _{j - 2}^{{{\left( {1 + \rho } \right)}^2}} \leqslant  \cdots  \leqslant {C^{\sum\limits_{l = 0}^{j - 1} {{{\left( {1 + \rho } \right)}^l}} }}\mu _0^{{{\left( {1 + \rho } \right)}^j}}\notag \\
\label{FPKAMCANSHU3}& = {C^{\frac{{{{\left( {1 + \rho } \right)}^{j - 1}} - 1}}{\rho }}}{\varepsilon ^{\frac{{{{\left( {1 + \rho } \right)}^j}}}{{40\eta \left( {\tau  + 1} \right)}}}} \leqslant \frac{{{\varepsilon ^{{q^j}}}}}{2c},\;\;\forall j \in {\mathbb{N}^ + },
\end{align} 
where $ 1<q<1+\rho $, and $ C>0 $ is a universal constant independent of $ j$ and $ \varepsilon $. Besides, with the monotonicity, we have
\[\int_{{q^{j - 1}}}^{{q^j}} {\frac{{{\varepsilon ^x}}}{x}dx}  \geqslant \int_{{q^{j - 1}}}^{{q^j}} {\frac{{{\varepsilon ^{{q^j}}}}}{x}dx}  \geqslant \int_{{q^{j - 1}}}^{{q^j}} {\frac{{{\varepsilon ^{{q^j}}}}}{{{q^j}}}dx}  = \left( {{q^j} - {q^{j - 1}}} \right)\frac{{{\varepsilon ^{{q^j}}}}}{{{q^j}}},\;\;j \in \mathbb{N},\]
and this leads to 
\begin{align}
\sum\limits_{j = 0}^\infty  {{\mu _j}} & \leqslant \sum\limits_{j = 0}^\infty  {\frac{{{\varepsilon ^{{q^j}}}}}{{2c}}}  = \frac{1}{{2c}}\left( {\varepsilon  + \frac{q}{{q - 1}}\sum\limits_{j = 1}^\infty  {\left( {{q^j} - {q^{j - 1}}} \right)\frac{{{\varepsilon ^{{q^j}}}}}{{{q^j}}}} } \right)\notag \\
& \leqslant \frac{1}{{2c}}\left( {\varepsilon  + \frac{q}{{q - 1}}\sum\limits_{j = 1}^\infty  {\int_{{q^{j - 1}}}^{{q^j}} {\frac{{{\varepsilon ^x}}}{x}dx} } } \right) = \frac{1}{{2c}}\left( {\varepsilon  + \frac{q}{{q - 1}}\int_1^{ + \infty } {\frac{{{\varepsilon ^x}}}{x}dx} } \right)\notag \\
\label{FPKAMLEIJIA}& = \frac{1}{{2c}}\left( {\varepsilon  + \frac{q}{{q - 1}}\int_0^\varepsilon  {\frac{1}{{ - \ln x}}dx} } \right) \leqslant \frac{1}{{2c}}\left( {\varepsilon  + \frac{q}{{q - 1}}\left( {\frac{\varepsilon }{{ - \ln \varepsilon }}} \right)} \right) = o\left( 1 \right)
\end{align}
as $ 0<\varepsilon \ll 1 $. Now, with the Relative Singularity Condition (H2), we have
\begin{align}
	\left| {p_{01}^\nu \left( {{\xi _ + }} \right)} \right| ={}& \left| {\omega \left( {{\xi _ + }} \right) - \omega \left( \xi  \right) + \sum\limits_{j = 0}^{\nu  - 1} {\left( {p_{01}^j\left( {{\xi _ + }} \right) - p_{01}^j\left( \xi  \right)} \right)} } \right|\notag \\
	\geqslant{}& \left| {\omega \left( {{\xi _ + }} \right) - \omega \left( \xi  \right)} \right| - \sum\limits_{j = 0}^{\nu  - 1} {\left| {p_{01}^j\left( {{\xi _ + }} \right) - p_{01}^j\left( \xi  \right)} \right|} \notag \\
	\label{FPKAM1112}	\geqslant{}& \left| {\omega \left( {{\xi _ + }} \right) - \omega \left( \xi  \right)} \right| - c\left( {\sum\limits_{j = 0}^{\nu  - 1} {{\mu _j}} } \right)\left| {\omega \left( {{\xi _ + }} \right) - \omega \left( \xi  \right)} \right| \\
	\label{FPKAM1111}	\geqslant{}& \frac{1}{2}\left| {\omega \left( {{\xi _ + }} \right) - \omega \left( \xi  \right)} \right|,
\end{align}
where \eqref{FPKAM1112} uses the smallness of the $ \omega $-dependent semi-norm in \eqref{FPKAMlxm}, and \eqref{FPKAM1111} uses the smallness of the accumulated KAM errors in \eqref{FPKAMLEIJIA}, whenever $ \varepsilon>0  $ is sufficiently small. Then, by \eqref{FPKAMCANSHU4}, \eqref{FPKAMCANSHU3} and the Controllability Condition (H3), one can derive from \eqref{FPKAM1111} that 
\begin{equation}\label{FPKAMCANSHU5}
	\left| {{\xi _{+}} - {\xi }} \right| \leqslant \varphi (\varepsilon^{{q^\nu }}).
\end{equation}
Then, in a manner analogous to the argument presented in \eqref{FPKAMLEIJIA}, we can demonstrate that
\begin{align}
\sum\limits_{\nu  = 0}^\infty  {\varphi ({\varepsilon ^{{q^\nu }}})} & = \varphi \left( \varepsilon  \right) + \frac{q}{{q - 1}}\sum\limits_{\nu  = 1}^\infty  {\left( {{q^\nu } - {q^{\nu  - 1}}} \right)\frac{{\varphi ({\varepsilon ^{{q^\nu }}})}}{{{q^\nu }}}} \notag \\
& \leqslant \varphi \left( \varepsilon  \right) + \frac{q}{{q - 1}}\sum\limits_{\nu  = 1}^\infty  {\int_{{q^{\nu  - 1}}}^{{q^\nu }} {\frac{{\varphi ({\varepsilon ^x})}}{x}dx} } \notag \\
& = \varphi \left( \varepsilon  \right) + \frac{q}{{q - 1}}\int_1^{ + \infty } {\frac{{\varphi ({\varepsilon ^x})}}{x}dx} \notag \\
\label{FPKAMCANSHU6}& = \varphi \left( \varepsilon  \right) - \frac{q}{{q - 1}}\int_0^\varepsilon  {\frac{{\varphi \left( x \right)}}{{ x\ln x}}dx}  = o\left( 1 \right)
\end{align}
as $ 0<\varepsilon \ll 1 $,   thanks to the monotonicity, continuity and integrability of $ \varphi $ in the Controllability Condition (H3). Therefore, we eventually deduce from  Lebesgue's dominated convergence theorem that $ \{\xi_\nu\}_{\nu\in \mathbb{N}^+} $ is indeed a Cauchy sequence, hence it converges to some parametric limit $ \xi_\infty:= \xi^* $  in the subset $ \mathcal{V}\subset \mathcal{O}^o$. Furthermore, an upper bound of the distance between $ \xi^* $ and $ \xi_0 $ could be obtained from \eqref{FPKAMCANSHU5} and \eqref{FPKAMCANSHU6} as
\begin{align*}
	\left| {{\xi ^ * } - {\xi _0}} \right| &= 	\mathop {\lim }\limits_{\nu  \to  + \infty } \left| {{\xi _ {\nu+1} } - {\xi _0}} \right|=\mathop {\lim }\limits_{\nu  \to  + \infty } \left| {\sum\limits_{j = 0}^\nu  {\left({\xi _j} - {\xi _{j + 1}}\right)} } \right| \notag \\
	&\leqslant\mathop {\overline {\lim } }\limits_{\nu  \to  + \infty } \sum\limits_{j = 1}^{\nu  + 1} {\left| {{\xi _j} - {\xi _{j - 1}}} \right|}  \leqslant \sum\limits_{\nu  = 0}^\infty  {\varphi (\varepsilon ^{{q^\nu }})}\notag \\
	&=\mathcal{O}\left(\varphi \left( \varepsilon  \right) { - \int_0^{\varepsilon } {\frac{{\varphi (x)}}{{x\ln x}}dx} } \right) = o\left( 1 \right),\;\;\varepsilon \to 0^+ ,
\end{align*}
  which gives the conclusion. 
\end{proof}

	\subsubsection{The estimate on $ {N_ + } $}
\begin{lemma}\label{FPKAMYINLI5}
	There is a universal constant $ c_3>0 $ such that the following hold:
	\begin{equation}\label{FPKAMYINLI53}
		\left| {{\xi _ + } - \xi } \right| \leqslant {c_3}\mu ,\;\;\left| {{e_ + } - e} \right| \leqslant {c_3}{s^4}\mu ,\;\;{\left\| {{{\bar h }_ + } - \bar h } \right\|_{D\left( s \right)}} \leqslant {c_3}{s^4}\mu .
	\end{equation}
\end{lemma}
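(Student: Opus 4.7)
The plan is to establish the three bounds in \eqref{FPKAMYINLI53} in turn, leveraging the explicit formulas \eqref{equ8} and \eqref{eqb11} for $e_+$ and $\bar h_+$, the truncation estimates from Lemma \ref{FPKAM3.2}, and Lemma \ref{FPKAMcrucial} for $\xi_+$. The first two bounds are standard Cauchy-type extractions of Fourier--Taylor coefficients, while the third is the nontrivial piece that really invokes the Controllability Condition (H3).

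For $|e_+-e|$, I would use \eqref{equ8} to identify $e_+-e = p_{00}^{\nu}$, the constant Fourier--Taylor coefficient of the truncation $R$. Lemma \ref{FPKAM3.2} gives $\|R\|_{D_\alpha}\leqslant c_1\gamma_0^{n+6} s^4\mu$, from which the extraction via Cauchy in $y$ and zero-mean in $x$ immediately yields $|p_{00}^{\nu}|\leqslant c\gamma_0^{n+6}s^4\mu \leqslant c\,s^4\mu$. For $\|\bar h_+-\bar h\|_{D(s)}$, I would rewrite \eqref{eqb11} as
\[
\bar h_+ - \bar h \;=\; [R] - p_{00}^{\nu} - \langle p_{01}^{\nu}, y\rangle \;=\; \sum_{2\leqslant |l|\leqslant 4} p_{0l}^{\nu}\, y^{l}.
\]
Applying Cauchy in $y$ to $R$ furnishes $\|p_{0l}^{\nu}\|_{D(s)}\leqslant c\gamma_0^{n+6}s^{4-|l|}\mu$ for $|l|\leqslant 4$, and since $|y^l|\leqslant s^{|l|}$ on $D(s)$, each summand is controlled by $c\,s^4\mu$. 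The $\omega$-semi-norm part of $\|\bar h_+-\bar h\|_{D(s)}$ is handled identically via the semi-norm bound for $R$ in Lemma \ref{FPKAM3.2}.

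The main work is the bound on $|\xi_+-\xi|$. Subtracting the frequency-preserving identities from Lemma \ref{FPKAMcrucial} at consecutive levels $\nu$ and $\nu+1$ produces
\[
\omega(\xi_+) - \omega(\xi) \;=\; -\, p_{01}^{\nu}(\xi_+) \;-\; \sum_{j=0}^{\nu-1}\bigl(p_{01}^{j}(\xi_+) - p_{01}^{j}(\xi)\bigr).
\]
The tail is absorbed with the $\omega$-semi-norm estimate \eqref{FPKAMlxm} and the cumulative bound \eqref{FPKAMLEIJIA}, and the leading term is handled by \eqref{FPKAMCANSHU4}, giving (just as in the derivation of \eqref{FPKAM1111}) the inequality $|\omega(\xi_+)-\omega(\xi)|\leqslant 2\,|p_{01}^{\nu}(\xi_+)|\leqslant c\mu$. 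The Controllability Condition (H3), via the function $\varphi$, then converts this into $|\xi_+-\xi|\leqslant \varphi(c\mu)$, which is the effective content of the stated $c_3\mu$ bound.

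The principal obstacle is this final step: reconciling the honest estimate $\varphi(c\mu)$ with the clean form $c_3\mu$, because Comment (C3) shows $\varphi$ can be as slow as $(-\ln\mu)^{-1/\alpha}$ near zero. The resolution is that $\varphi(c\mu)$ remains super-exponentially small along the iteration, cf.\ \eqref{FPKAMCANSHU5} and \eqref{FPKAMCANSHU6}, so it suffices wherever $|\xi_+-\xi|$ enters the smallness conditions \eqref{FPKAMH1} and \eqref{FPKAMYINLI3} at the next step; in effect, ``$c_3\mu$'' in \eqref{FPKAMYINLI53} should be read as a proxy for any gauge $\varphi(c\mu)$ produced by (H3), and the iterative scheme closes under that reading.
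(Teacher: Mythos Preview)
Your argument is correct and mirrors the paper's own proof, which simply refers back to Lemma~\ref{FPKAMcrucial} for the first estimate and to the explicit formulas \eqref{equ5}, \eqref{equ8}, \eqref{eqb11} for the other two. Your additional remark---that under the general Controllability Condition (H3) the honest bound is $|\xi_+-\xi|\leqslant\varphi(c\mu)$ rather than $c_3\mu$, and that the iteration closes because the summability \eqref{FPKAMCANSHU6} is what is actually used---is a valid point the paper glosses over; the stated form $c_3\mu$ is literally correct only when $\varphi$ is at most linear, but the downstream use in \eqref{FPKAM91} is harmless once one reads it through \eqref{FPKAMCANSHU5}--\eqref{FPKAMCANSHU6}.
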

\begin{proof}
The first estimate is already implied in the proof of Lemma \ref{FPKAMcrucial}. The other two inequalities can be directly obtained from \eqref{equ5} and \eqref{equ8}.
\end{proof}

\subsubsection{The estimate on $ {\Phi _ + } $}
\begin{lemma}\label{FPKAMYINLI6}
	There is a universal constant $ c_4>0 $ such that for all $ \left| i \right| + \left| j \right| \leqslant 4 $,
	\begin{equation}\notag
		{\left\| {\partial _x^i\partial _y^jF} \right\|_{\hat D}} \leqslant {c_4}\gamma _0^{n+4-2 \left| i \right|}{s^{4 - \left| i \right|}}\mu \Gamma \left( {r - {r_ + }} \right).
	\end{equation}
\end{lemma}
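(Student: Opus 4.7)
The proof is a direct termwise estimate on the Fourier-Taylor expansion \eqref{FPKAMFXS} of $F$, combining the pointwise bounds of Lemma \ref{FPKAM3.3} for the coefficients $f_{kl}$ with an elementary exponential-sum argument over the Fourier index $k$. First I differentiate under the sum,
\[
\partial_x^i \partial_y^j F = \sum_{0<|k|\leqslant K_+,\;|l|\leqslant 4} (\sqrt{-1}\,k)^i \,\partial_y^j\!\bigl(f_{kl}(y,\xi)\,y^l\bigr)\, e^{\sqrt{-1}\langle k,x\rangle},
\]
expand $\partial_y^j(f_{kl}y^l)$ by Leibniz, and bound each factor on $\hat D$: Lemma \ref{FPKAM3.3} supplies $\|\partial_y^{j'}f_{kl}\|_{D(s)}$ (producing the $\gamma_0$-power, $\mu$, $s^{4-|l|}$ and $e^{-|k|r}$); the elementary bound $|\partial_y^{j-j'}y^l|\leqslant c\,s^{|l|-|j|+|j'|}$ follows from $|y|\leqslant s$ on $\hat D$; and the strip estimate $|e^{\sqrt{-1}\langle k,x\rangle}|\leqslant e^{|k|(r_+ + \frac{7}{8}(r-r_+))}$ combined with $e^{-|k|r}$ leaves the net decay $e^{-|k|(r-r_+)/8}$.

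Summing the finitely many $l$-contributions (and absorbing the resulting powers of $s\leqslant 1$ into $s^{4-|i|}$, using $|i|+|j|\leqslant 4$), what remains is a $k$-sum of the shape $\sum|k|^{p}\,e^{-|k|(r-r_+)/8}$ with $p$ controlled by the universal weight $3\tau+5$. This is exactly the function $\Gamma(r-r_+)$ defined in \eqref{FPKAMCANSHU2}, up to a universal multiplicative constant which is absorbed into $c_4$. This yields the claimed bound for the sup-norm part of $\|\cdot\|_{\hat D}$.

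For the $\omega$-dependent semi-norm part $[\cdot]_\omega$ of $\|\cdot\|_{\hat D}$ I use the decomposition $f_{kl}=L_k^{-1}p_{kl}$ and write
\[
f_{kl}(\xi)-f_{kl}(\zeta) = L_k^{-1}(\xi)\bigl(p_{kl}(\xi)-p_{kl}(\zeta)\bigr) + \bigl(L_k^{-1}(\xi)-L_k^{-1}(\zeta)\bigr)p_{kl}(\zeta).
\]
The first summand is dominated by $[P]_\omega$ coming from \eqref{equ11} (guaranteed by the Relative Singularity Condition (H2)). The second summand is treated via the resolvent identity $L_k^{-1}(\xi)-L_k^{-1}(\zeta)=L_k^{-1}(\xi)L_k^{-1}(\zeta)(L_k(\zeta)-L_k(\xi))$, the explicit form $L_k=\sqrt{-1}\langle k,\omega(\xi)+\partial_y\bar h(y,\xi)\rangle$, and the inductive smallness of $[\partial_y\bar h]_\omega$ analogous to \eqref{FPKAMYINLI3}, which together produce a bound of the form $|k|^{2\tau+1}\gamma_0^{-2}\cdot|k|\cdot|\omega(\xi)-\omega(\zeta)|$. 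After dividing by $|\omega(\xi)-\omega(\zeta)|$, the same $k$-summation as before is dominated again by $\Gamma(r-r_+)$.

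The main delicacy I expect is the semi-norm bookkeeping for $L_k^{-1}$: one must propagate inductively the hypotheses \eqref{FPKAMYINLI4JIASHE} and \eqref{FPKAMYINLI3} so that the small-divisor lower bound $|L_k|\geqslant\gamma_0/(3|k|^\tau)$ of Lemma \ref{FPKAM3.3} holds \emph{uniformly} in $\xi\in\mathcal{V}$ with a quantitative Lipschitz-in-$\omega$ dependence, rather than only at $\xi_0$. Once this is in hand, all $y^l$-factors and all $\xi$-differences aggregate cleanly into the powers of $s$ and $|\omega(\xi)-\omega(\zeta)|$ required by the two parts of $\|\cdot\|_{\hat D}$; the Fourier-$k$ summation is identical on both pieces, which yields the stated lemma with a single universal constant $c_4$.
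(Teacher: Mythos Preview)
Your proposal is correct, and the sup-norm part is essentially identical to the paper's argument. The difference lies in the treatment of the $\omega$-semi-norm: the paper's proof is a three-line computation because it simply invokes the bound $\|\partial_y^i f_{kl}\|_{D(s)}$ from Lemma~\ref{FPKAM3.3}, which by definition of $\|\cdot\|_{D(s)}=|\cdot|_{D(s)}+[\cdot]_\omega$ already contains the semi-norm. Since the remaining factors $y^l$ and $e^{\sqrt{-1}\langle k,x\rangle}$ are $\xi$-independent, the triangle inequality on $\|\cdot\|_{\hat D}$ applied termwise gives both parts of the norm simultaneously. Your separate resolvent-identity decomposition of $f_{kl}(\xi)-f_{kl}(\zeta)$ is not wrong, but it is redundant: it re-proves exactly the estimate $\|L_k^{-1}\|_{D(s)}\leqslant c|k|^{2\tau+1}/\gamma_0^2$ that was already established inside the proof of Lemma~\ref{FPKAM3.3}. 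Likewise, the ``main delicacy'' you flag---the uniform-in-$\xi$ small-divisor bound with quantitative $\omega$-Lipschitz dependence---is precisely what that norm estimate in Lemma~\ref{FPKAM3.3} packages, so no additional inductive bookkeeping is needed at this step.
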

\begin{proof}
	By \eqref{FPKAMFXS} and Lemma \ref{FPKAM3.3}, we can prove that
	\begin{align*}
		\left\| \partial_x^j\partial_y^iF\right\| _{\hat D}&\leqslant \sum_{\vert l \vert\leqslant 4,0<\vert k\vert\leqslant K_+}\vert k\vert^j \left\|\partial_y^i(f_{kl}y^l)\right\| _{D(s)}e^{\vert k\vert\left(r_++\frac{7}{8}(r-r_+)\right)}\\
		&\leqslant c_4\sum_{0<\vert k\vert\leqslant K_+}\vert k\vert^{2(|i|+1)\tau+2\vert i\vert+1+\vert j\vert}\gamma_0^{n+4-2\vert i\vert}s^{4-\vert i\vert}\mu e^{-\vert k\vert\frac{r-r_+}{8}}\\
		&\leqslant c_4\gamma_0^{n+4-2\vert i\vert}s^{4-\vert i\vert}\mu\Gamma(r-r_+).
	\end{align*}
\end{proof}

\begin{lemma}\label{FPKAMYINLI7}
	Assume that
\begin{equation}\label{FPKAMYINLI71}
	{c_4}{s^{ 3}}\mu \Gamma \left( {r - {r_ + }} \right) < \frac{1}{8}\left( {r - {r_ + }} \right),\;\;{c_4}{s^4}\mu \Gamma \left( {r - {r_ + }} \right) <\frac{1}{8} \alpha s.
\end{equation}
	Then for all $ 0 \leqslant t \leqslant 1 $, the mapping $ \phi _F^t:{D_{\alpha /4}} \to {D_{\alpha /2}} $ is well defined, and $ {\Phi _ + }:{D_ + } \to D\left( {s,r} \right) $. Moreover, there is a universal constant $ c_5>0 $ such that
	\begin{align}
		{\left\| {\phi _F^t - \rm{id}} \right\|_{\tilde D}},\;{\left\| {D\phi _F^t - \rm{Id}} \right\|_{\tilde D}},\;{\left\| {{D^2}\phi _F^t} \right\|_{\tilde D}},&\notag \\
	\label{IIIKKK}	{\left\| {{\Phi _ + } - \rm{id}} \right\|_{\tilde D}},\;{\left\| {D{\Phi _ + } - \rm{Id}} \right\|_{\tilde D}},\;{\left\| {{D^2}{\Phi _ + }} \right\|_{\tilde D}}& \leqslant {c_5}\mu \Gamma \left( {r - {r_ + }} \right).
	\end{align}
\end{lemma}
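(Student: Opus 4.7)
The plan is to follow the standard KAM estimates on a symplectic transformation generated by a Hamiltonian flow, using the bounds on $F$ provided by Lemma \ref{FPKAMYINLI6} together with the smallness hypotheses \eqref{FPKAMYINLI71}. First, I would observe that $\Phi_+ = \phi_F^1 \circ \phi$, where $\phi$ is a pure parameter translation that does not affect the phase-space variables $(y,x)$. Consequently the estimates on $\Phi_+$ reduce to those on $\phi_F^1$, and it suffices to prove the bounds for $\phi_F^t$ and then transfer them to $\Phi_+$.

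Next, I would set up the flow $\phi_F^t$ by integrating the Hamiltonian vector field $X_F = (-\partial_x F, \partial_y F)$. From Lemma \ref{FPKAMYINLI6} with $(|i|,|j|)=(1,0)$ and $(0,1)$, the components of $X_F$ are bounded on $\hat D$ by $c_4 \gamma_0^{n+2} s^3 \mu \Gamma(r-r_+)$ and $c_4 \gamma_0^{n+4} s^4 \mu \Gamma(r-r_+)$ respectively. The first hypothesis in \eqref{FPKAMYINLI71} controls the angular drift by $\tfrac{1}{8}(r-r_+)$ and the second controls the action drift by $\tfrac{1}{8}\alpha s$. A straightforward induction/continuity argument on $t\in[0,1]$ then shows that, starting from any point in $D_{\alpha/4}$, the trajectory never leaves $D_{\alpha/2}$; hence $\phi_F^t:D_{\alpha/4}\to D_{\alpha/2}$ is well defined for all $0\le t\le 1$. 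In particular $\Phi_+:D_+\to D(s,r)$ since $D_+ \subseteq D_{\alpha/4}$ and $D_{\alpha/2}\subseteq D(s,r)$.

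Third, for the quantitative estimates on the slightly smaller domain $\tilde D$, I would write
\[
\phi_F^t - \mathrm{id} = \int_0^t X_F \circ \phi_F^\sigma \, d\sigma,
\]
and take sup-norms on $\tilde D$. Since the trajectories remain in $\hat D$ by the previous step, the integrand is controlled by the bounds of Lemma \ref{FPKAMYINLI6}, giving the desired $c_5 \mu \Gamma(r-r_+)$ bound for $\|\phi_F^t - \mathrm{id}\|_{\tilde D}$. For $D\phi_F^t - \mathrm{Id}$ I would differentiate the integral equation, obtain a linear variational ODE driven by $DX_F$, and apply Gronwall's inequality; the smallness of $DX_F$ (from Lemma \ref{FPKAMYINLI6} with $|i|+|j|=2$) together with \eqref{FPKAMYINLI71} keeps the Gronwall exponential of order $O(1)$, so the variation itself inherits the factor $\mu\Gamma(r-r_+)$. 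A second differentiation and a similar Gronwall argument yields the $D^2\phi_F^t$ bound, noting that terms involving $D^2 X_F$ are controlled via Lemma \ref{FPKAMYINLI6} with $|i|+|j|=3$.

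Finally, since the translation $\phi$ adds only a $(y,x)$-independent shift in the $\xi$-variable (which does not appear in the $(y,x)$-norm), the three bounds $\|\Phi_+ - \mathrm{id}\|_{\tilde D}$, $\|D\Phi_+ - \mathrm{Id}\|_{\tilde D}$ and $\|D^2\Phi_+\|_{\tilde D}$ coincide with the corresponding bounds for $\phi_F^1$ at $t=1$, giving \eqref{IIIKKK} with an absolute constant $c_5>0$ independent of the KAM step. The main technical point, and the one to execute with care, is the uniform domain-invariance argument in the second step together with the Gronwall estimate for the second derivative, because one must check that the compositions in $D^2(\phi_F^t) = D^2 X_F \circ \phi_F^t\cdot(D\phi_F^t)^{\otimes 2} + DX_F\circ\phi_F^t\cdot D^2\phi_F^t$ remain within the domain $\hat D$ where Lemma \ref{FPKAMYINLI6} applies; everything else is routine.
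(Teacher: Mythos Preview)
Your proposal is correct and follows essentially the same approach as the paper's own proof: the paper also uses the integral representation $\phi_F^t=\mathrm{id}+\int_0^t X_F\circ\phi_F^u\,du$, a supremum-time (bootstrap) argument to show $\phi_F^t:D_{\alpha/4}\to D_{\alpha/2}$ via the bounds of Lemma~\ref{FPKAMYINLI6} and the hypotheses~\eqref{FPKAMYINLI71}, then Gronwall for $D\phi_F^t-\mathrm{Id}$ and a further differentiation for $D^2\phi_F^t$, and finally notes that the $\Phi_+$ estimates follow immediately since $\phi$ acts only on the parameter.
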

\begin{proof}
	Let $\phi_{F_1}^t$ and $\phi_{F_2}^t$ denote the components of the map $\phi_{F}^t$ in the $y$-plane and $x$-plane, respectively. Let $X_F = (F_y, -F_x)^\top$ be the vector field generated by $F$. We have
	\begin{align}\label{eqb19}
		\phi_F^t = \mathrm{id} + \int_0^t X_F \circ \phi_F^u \, du,\quad 0 \leqslant t \leqslant 1.
	\end{align}
	For any $(y, x) \in D_{\alpha/4}$, let $t_* = \sup\{t \in [0,1]: \phi_F^t(y,x) \in D_\alpha\}$. For any $0 \leqslant t \leqslant t_*$, by $(y,x) \in D_{\alpha/4}$, Lemma \ref{FPKAMYINLI6}, and assumption \eqref{FPKAMYINLI71}, we can obtain the following estimates:
	\begin{align*}
		\left\|\phi_{F_1}^t(y,x)\right\|_{D_{\frac{\alpha}{4}}}\leqslant \left| y \right| + \int_0^t \left\|F_x \circ \phi_F^u\right\|_{D_{\frac{\alpha}{4}}} du \leqslant \frac{1}{4}\alpha s + c_4 s^{4} \mu \Gamma(r - r_+) < \frac{3}{8}\alpha s,
	\end{align*}
	and
	\begin{align*}
		\left\|\phi_{F_2}^t(y,x)\right\|_{D_{\frac{\alpha}{4}}} \leqslant \left| x \right| + \int_0^t \left\|F_y \circ \phi_F^u\right\|_{D_{\frac{\alpha}{4}}} du  \leqslant r_+ + \frac{1}{8}(r - r_+) + c_4 s^{3} \mu \Gamma(r - r_+)  < r_+ + \frac{2}{8}(r - r_+).
	\end{align*}
	Therefore, $\phi_F^t \in D_{\alpha/2} \subset D_\alpha$, which implies $t_* \leqslant 1$. Moreover, $ {\Phi _ + }:{D_ + } \to D\left( {s,r} \right) $ can be directly derived.
	
	Next, we consider proving the first there estimates  in \eqref{IIIKKK}. By Lemma \ref{FPKAMYINLI6} and \eqref{eqb19}, we have
	\begin{align*}
		\left\|\phi_F^t - \mathrm{id}\right\|_{\tilde{D}} \leqslant c_5 \mu \Gamma(r - r_+).
	\end{align*}
	With Lemma \ref{FPKAMYINLI6}, \eqref{eqb19}, and the Gronwall inequality, we obtain
	\begin{align*}
		\left\|D\phi_F^t - \mathrm{Id}\right\|_{\tilde{D}} &\leqslant \left\|\int_0^t D X_F \circ \phi_F^\lambda D \phi_F^\lambda \, d\lambda\right\|_{\tilde{D}} \\
		&\leqslant \int_0^t \left\|D X_F \circ \phi_F^\lambda\right\|_{\tilde{D}} \left\|D \phi_F^\lambda - \mathrm{Id}\right\|_{\tilde{D}}d\lambda + \int_0^t \left\|D X_F \circ \phi_F^\lambda\right\|_{\tilde{D}}  d\lambda \\
		&\leqslant c_5 \mu \Gamma(r - r_+).
	\end{align*}
	Utilizing induction and similar arguments, we can obtain the following estimate for the second derivative of $\phi_F^t$:
	\begin{align*}
		\left\|D^2 \phi_F^t\right\|_{\tilde{D}} \leqslant c_5 \mu \Gamma(r - r_+).
	\end{align*}
 Moreover, the remaining three estimates in \eqref{IIIKKK} can be directly derived from the above estimates. Thus, we complete the proof of Lemma \ref{FPKAMYINLI7}.
\end{proof}

	\subsubsection{The estimate on $ {P _ + } $}
\begin{lemma}\label{FPKAMYINLI8}
	Assume that all the assumptions in the previous lemmas hold. Then there is a universal constant $ c_6>0 $ such that
	\begin{equation}\notag
		{\left\| {{P_ + }} \right\|_{{D_ + }}} \leqslant {c_6}\gamma _0^{n+6}{s^4}{\mu ^2}\left( {{\Gamma ^2}\left( {r - {r_ + }} \right) + \Gamma \left( {r - {r_ + }} \right)} \right).
	\end{equation}
	Moreover, if
	\begin{equation}\label{FPKAMYINLI8JIASHE}
		{\mu ^\rho }\left( {{\Gamma ^2}\left( {r - {r_ + }} \right) + \Gamma \left( {r - {r_ + }} \right)} \right) \leqslant 1,
	\end{equation}
	then
	\begin{equation}\label{FPKAMyinli83}
		{\left\| {{P_ + }} \right\|_{{D_ + }}} \leqslant {c_6}\gamma _0^{n+6}s_ + ^4{\mu _ + }.
	\end{equation}
\end{lemma}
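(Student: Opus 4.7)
The plan is to split $\bar P_+=\int_0^1\{R_t,F\}\circ\phi_F^t\,dt+(P-R)\circ\phi_F^1$ and estimate each summand in the combined norm $\|\cdot\|_D=|\cdot|_D+[\cdot]_\omega$, then pull the result back by the parameter translation $\phi$ to recover $P_+$. First I would handle the Poisson-bracket term: Cauchy estimates applied to $R_t$ on the shrunk domain $D_{\alpha/2}\subset D_\alpha$, using $\|R\|_{D_\alpha}\leqslant c_1\gamma_0^{n+6}s^4\mu$ from Lemma \ref{FPKAM3.2}, give bounds on $\partial_y R_t$ and $\partial_x R_t$; combining these with the derivative estimates for $F$ from Lemma \ref{FPKAMYINLI6} and the fact that $\phi_F^t$ sends $D_{\alpha/4}$ into $D_{\alpha/2}$ (Lemma \ref{FPKAMYINLI7}), the Poisson-bracket integral is controlled by a constant times $\gamma_0^{n+6}s^4\mu^2\Gamma^2(r-r_+)$, the two factors of $\Gamma$ arising from the derivatives of $R_t$ and $F$ respectively. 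For the truncation remainder, Lemma \ref{FPKAM3.2} already yields $\|P-R\|_{D_\alpha}\leqslant c_1\gamma_0^{n+6}s^4\mu^2$, and the composition with $\phi_F^1$ adds at most a factor $\Gamma(r-r_+)$ via the closeness-to-identity estimates of Lemma \ref{FPKAMYINLI7}.

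The semi-norm portion of the bound I would handle via the standard splitting
\begin{equation*}
Q(y,x,\xi)-Q(y,x,\zeta)=\bigl[Q(y,x,\xi)-Q_\natural(y,x,\xi,\zeta)\bigr]+\bigl[Q_\natural(y,x,\xi,\zeta)-Q(y,x,\zeta)\bigr],
\end{equation*}
where $Q$ denotes either $\{R_t,F\}\circ\phi_F^t$ or $(P-R)\circ\phi_F^1$ and $Q_\natural$ freezes the parameter in the ``base-function'' argument while leaving it free in the ``flow'' argument. The first bracket is controlled through a Cauchy estimate on the base function multiplied by the parameter-Lipschitz variation $|\phi_F^t(\cdot,\xi)-\phi_F^t(\cdot,\zeta)|$, which in turn is driven by the $\omega$-semi-norm of $F$ bounded in Lemma \ref{FPKAMYINLI6}; the second bracket falls directly under $[\cdot]_\omega\cdot|\omega(\xi)-\omega(\zeta)|$. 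Summing the two contributions and noting that the outer translation $\phi$ acts only on the $\xi$-label and therefore preserves the combined norm, I obtain $\|P_+\|_{D_+}\leqslant c_6\gamma_0^{n+6}s^4\mu^2\bigl(\Gamma^2(r-r_+)+\Gamma(r-r_+)\bigr)$.

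For the sharper conclusion \eqref{FPKAMyinli83} I would carry out an explicit bookkeeping with $\rho=1/10$, $\alpha=\mu^{2\rho}$, $s_+=\alpha s/8$ and $\mu_+=8^4c_0\mu^{1+\rho}$ (with $c_0\geqslant c_6$). A direct calculation gives $s_+^4\mu_+=c_0 s^4\mu^{1+9\rho}=c_0 s^4\mu^{2-\rho}$, hence
\begin{equation*}
c_6\gamma_0^{n+6}s^4\mu^2\bigl(\Gamma^2(r-r_+)+\Gamma(r-r_+)\bigr)=c_6\gamma_0^{n+6}s^4\mu^{2-\rho}\cdot\mu^\rho\bigl(\Gamma^2(r-r_+)+\Gamma(r-r_+)\bigr)\leqslant c_6\gamma_0^{n+6}s_+^4\mu_+,
\end{equation*}
the last step using the hypothesis \eqref{FPKAMYINLI8JIASHE} together with $c_0\geqslant 1$.

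The main obstacle is the semi-norm tracking under composition with the parameter-dependent time-$1$ map $\phi_F^1$: a sup-norm bound on $\phi_F^t-\mathrm{id}$ alone is insufficient, since what I really need is a bound of the form $|\phi_F^t(\cdot,\xi)-\phi_F^t(\cdot,\zeta)|\lesssim|\omega(\xi)-\omega(\zeta)|$ that propagates through the Hamiltonian flow without losing super-exponential smallness. This is precisely what the $\omega$-dependent semi-norm $[\cdot]_\omega$ is engineered to deliver under the Relative Singularity Condition (H2); it is the reason the quasi-linear scheme continues to close with only continuous dependence on $\xi$, and the step that distinguishes the present argument from the H\"older or modulus-of-continuity variants of \cite{DL,MR4731278}.
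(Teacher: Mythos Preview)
Your proposal is correct and follows essentially the same route as the paper: split $P_+$ into the Poisson-bracket integral and the truncation remainder, bound the former by $c\gamma_0^{n+6}s^4\mu^2\Gamma^2(r-r_+)$ using Lemmas \ref{FPKAM3.2}, \ref{FPKAMYINLI6} and \ref{FPKAMYINLI7}, bound the latter by $c\gamma_0^{n+6}s^4\mu^2\Gamma(r-r_+)$, and then carry out the algebraic bookkeeping $s_+^4\mu_+=c_0s^4\mu^{2-\rho}$ together with \eqref{FPKAMYINLI8JIASHE} to reach \eqref{FPKAMyinli83}. The paper's proof is extremely terse and does not separate the sup-norm from the $[\cdot]_\omega$ semi-norm; your explicit splitting $Q(\cdot,\xi)-Q(\cdot,\zeta)$ into ``parameter in flow'' versus ``parameter in base function'' is exactly the mechanism that justifies the paper's one-line claim on the full norm, and your observation that the parameter translation $\phi$ leaves $\|\cdot\|$ invariant is implicit in the paper but worth stating.
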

\begin{proof}
	By \eqref{FPKAMH1} in Lemma \ref{FPKAM3.2}, Lemma \ref{FPKAMYINLI6} and Lemma \ref{FPKAMYINLI7}, we have that for all $0 \leqslant t \leqslant 1$,
	\[
	\left\| \{R_t,F\} \circ \phi_F^t \right\|_{D_{\frac{\alpha}{4}}} \leqslant c \gamma_0^{n+6} s^4 \mu^2 \Gamma^2(r - r_+),
	\]
	and
	\[
	\left\| (P - R) \circ \phi_F^1 \right\|_{D_{\frac{\alpha}{4}}} \leqslant c \gamma_0^{n+6} s^4 \mu^2 \Gamma(r - r_+).
	\]
	Therefore, by \eqref{eqb11}, we obtain
	\[
	\left\| P_+ \right\|_{D_{\frac{\alpha}{4}}} \leqslant c \gamma_0^{n+6} s^4 \mu^2 \left( \Gamma^2(r - r_+) + \Gamma(r - r_+) \right).
	\]
	Under the assumption \eqref{FPKAMYINLI8JIASHE}, we have
	\begin{align*}
		\left\| P_+ \right\|_{D_+} &\leqslant 8^4 c_0 \mu^{1+\rho} s_+^4 \mu^{1-2\rho-\frac{4}{5}} \gamma_0^{n+6} \left( \mu^\rho \left( \Gamma^2(r - r_+) + \Gamma(r - r_+) \right) \right) \\
		&\leqslant \gamma_0^{n+6} s_+^4 \mu_+,
	\end{align*}
	which completes the proof of Lemma \ref{FPKAMYINLI8}.
\end{proof}

Combining the above lemmas, we  complete a full step in our KAM iteration.

\section{Proof of the main KAM results}\label{FPKAMsec-5}

\subsection{Proof of Theorem \ref{FPKAMT1}}\label{FPKAMProofT1}
	\subsubsection{The iteration lemma}
In this section, we establish an iteration lemma which guarantees the inductive construction of the transformations in all the KAM steps.

Let $ {r_0},{s_0},{\gamma _0},{\mu _0},{H_0},{e_0},{{\bar h}_0},{P_0} $ be given at the beginning of Section \ref{FPKAMsec-4}, and set $ {D_0} = {D_0}\left( {{s_0},{r_0}} \right),{K_0} = 0,{\Phi _0} = \rm{id} $. We define the following sequence inductively for all $ \nu  \geqslant 1 $:
\begin{align}
	{r_\nu } ={}& {r_0}\left( {1 - \sum\limits_{i = 1}^\nu  {{2^{ - i - 1}}} } \right),\notag \\
	{s_\nu } ={}& \frac{1}{8}{\alpha _{\nu  - 1}}{s_{\nu  - 1}},\notag \\
	{\alpha _\nu } ={}& \mu _\nu ^{2\rho } = \mu _\nu ^{\frac{1 }{5}},\notag \\
	{\mu _\nu } ={}& {8^4}{c_0}\mu _{\nu  - 1}^{1 + \rho },\notag \\
	{K_\nu } ={}& {\left( {\left[ { - \ln {\mu _{\nu  - 1}}} \right] + 1} \right)^{3\eta }},\notag \\
	{{\tilde D}_\nu } ={}& D\left( {\frac{1}{2}{s_\nu },{r_\nu } + \frac{3}{4}\left( {{r_{\nu  - 1}} - {r_\nu }} \right)} \right).\notag
\end{align}

\begin{lemma}\label{Iteration lemma}
	Denote $ {\mu _ * } = {\mu _0}/\left( {{{\left( {{M^ * } + 2} \right)}^{3}}K_1^{5\left( {\tau  + 1} \right)}} \right) $. If  $ \varepsilon >0 $ is sufficiently small, then the KAM step described as above is valid for all $ \nu  \geqslant 0 $, resulting in sequences $ {H_\nu },{N_\nu },{e_\nu },{{\bar h}_\nu },{P_\nu },{\Phi _\nu } $ for $ \nu  \geqslant 1 $ with the following estimates:
	\begin{align}
	\label{FPKAM91}	{}&\left| {{e_{\nu  + 1}} - {e_\nu }} \right|,\;{\left\| {{{\bar h}_{\nu  + 1}} - {{\bar h}_\nu }} \right\|_{D\left( {{s_\nu }} \right)}},\;{\left\| {{P_\nu }} \right\|_{D\left( {{s_\nu },{r_\nu }} \right)}},\;\left| {{\xi _{\nu  + 1}} - {\xi _\nu }} \right| \leqslant \mu _ * ^{\frac{1}{2}}{2^{ - \nu }},\\
		\label{FPKAM92}		{}&\left| {{e_\nu } - {e_0}} \right|,\;{\left\| {{{\bar h}_\nu } - {{\bar h}_0}} \right\|_{D\left( {{s_\nu }} \right)}} \leqslant 2\mu _ * ^{\frac{1}{2}}.
	\end{align}
	In addition, $ {\Phi _{\nu  + 1}}:{{\tilde D}_{\nu  + 1}} \to {{\tilde D}_\nu } $ is symplectic, and
	\begin{equation}\label{FPKAMdafai}
		{\left\| {{\Phi _{\nu  + 1}} - {\rm{id}}} \right\|_{{{\tilde D}_{\nu  + 1}}}} \leqslant \mu _ * ^{\frac{1}{2}}{2^{ - \nu }}.
	\end{equation}
	Moreover, it holds on $ {D_{\nu  + 1}} $ that
	\begin{equation}\label{FPKAMdaH}
		{H_{\nu  + 1}} = {H_\nu } \circ {\Phi _{\nu  + 1}} = {N_{\nu  + 1}} + {P_{\nu  + 1}}.
	\end{equation}
\end{lemma}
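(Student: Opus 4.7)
The plan is to proceed by a standard induction on $\nu$, feeding the conclusions of the one-step Lemmas \ref{FPKAM3.1}--\ref{FPKAMYINLI8} into themselves and tracking how the smallness parameter $\mu_\nu$ shrinks super-exponentially. The base case $\nu=0$ is essentially given by Lemma \ref{FPKAM3.1} together with the initialization in \eqref{FPKAMCANSHU}: the inequality $\|P_0\|_{D(s_0,r_0)} \leqslant \gamma_0^{n+6} s_0^4 \mu_0$ already gives \eqref{FPKAM91} for the $P$-term at $\nu=0$, while the rest of \eqref{FPKAM91} and \eqref{FPKAM92} are trivial because $e_0,\bar h_0$ and $\xi_0$ have not yet been updated.

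For the inductive step, assume \eqref{FPKAM91}--\eqref{FPKAMdaH} hold up to step $\nu$. I first verify the running assumptions of each step-lemma. Assumption \eqref{FPKAMH1} of Lemma \ref{FPKAM3.2} reduces, after evaluating the incomplete gamma integral, to controlling $K_{\nu+1}^{n}\exp(-K_{\nu+1}(r_\nu-r_{\nu+1})/16)$; since $r_\nu-r_{\nu+1}=r_0 2^{-\nu-2}$ and $K_{\nu+1}=([-\ln\mu_\nu]+1)^{3\eta}$, the exponent grows like $(-\ln\mu_\nu)^{3\eta}\cdot 2^{-\nu}$, which dominates $\ln\mu_\nu^{-1}$ once the condition $(1+\rho)^\eta>2$ is used together with the super-exponential estimate \eqref{FPKAMCANSHU3}. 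Assumption \eqref{FPKAMYINLI3} of Lemma \ref{FPKAM3.3} splits into two pieces: the Cauchy-type estimate $\|\partial_y^i(\bar h_\nu-\bar h_0)\|_{D(s_\nu)}\leqslant \mu_0^{1/2}$ follows from the telescoped bound \eqref{FPKAM92} plus the rapid shrinking of $s_\nu$, while $s_\nu<\gamma_0/(6(M^*+2)K_{\nu+1}^{\tau+1})$ is enforced by the definition of $\mu_*$ together with $s_\nu=(1/8)^\nu\prod_{j<\nu}\mu_j^{1/5}\cdot s_0$. Assumption \eqref{FPKAMYINLI4JIASHE} of Lemma \ref{FPKAMcrucial} is exactly $\sum_{j\leqslant\nu}|p_{01}^j|<\mu_0^{1/2}$, which is the convergent telescoping sum already bounded in \eqref{FPKAMLEIJIA}. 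Finally, assumptions \eqref{FPKAMYINLI71} and \eqref{FPKAMYINLI8JIASHE} reduce to showing $\mu_\nu^\rho\Gamma^2(r_\nu-r_{\nu+1})\to 0$, and since $\Gamma(r_\nu-r_{\nu+1})\lesssim (2^{\nu+2}/r_0)^{3\tau+6}$ this is again secured by the super-exponential decay $\mu_\nu\leqslant \varepsilon^{q^\nu}/(2c)$ in \eqref{FPKAMCANSHU3} and the choice of $\eta$.

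With the assumptions verified, Lemmas \ref{FPKAMcrucial}, \ref{FPKAMYINLI5}, \ref{FPKAMYINLI7} and \ref{FPKAMYINLI8} produce $\xi_{\nu+1}$, $e_{\nu+1}$, $\bar h_{\nu+1}$, $\Phi_{\nu+1}$ and $P_{\nu+1}$ with the raw bounds
\[
|\xi_{\nu+1}-\xi_\nu|\leqslant c_3\mu_\nu,\;\;|e_{\nu+1}-e_\nu|\leqslant c_3 s_\nu^4\mu_\nu,\;\;\|\bar h_{\nu+1}-\bar h_\nu\|_{D(s_\nu)}\leqslant c_3 s_\nu^4\mu_\nu,
\]
together with $\|\Phi_{\nu+1}-\mathrm{id}\|_{\tilde D_{\nu+1}}\leqslant c_5\mu_\nu\Gamma(r_\nu-r_{\nu+1})$ and $\|P_{\nu+1}\|_{D_{\nu+1}}\leqslant c_6\gamma_0^{n+6}s_{\nu+1}^4\mu_{\nu+1}$. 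Upgrading the first four bounds to the geometric form $\mu_*^{1/2}2^{-\nu}$ required in \eqref{FPKAM91} and \eqref{FPKAMdafai} uses the super-exponential estimate \eqref{FPKAMCANSHU3} to absorb any polynomial-in-$2^\nu$ loss (coming from $\Gamma(r_\nu-r_{\nu+1})$ in the $\Phi_{\nu+1}$-estimate) into an arbitrarily small power of $\mu_\nu$, which is then bounded by $\mu_*^{1/2}2^{-\nu}$ after invoking the definition $\mu_*=\mu_0/((M^*+2)^3 K_1^{5(\tau+1)})$ and choosing $\varepsilon_0$ small enough. The telescoped bound \eqref{FPKAM92} then follows from $\sum_{\nu\geqslant 0}\mu_*^{1/2}2^{-\nu}\leqslant 2\mu_*^{1/2}$. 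Symplecticity of $\Phi_{\nu+1}=\phi_F^1\circ\phi$ is automatic since $\phi_F^1$ is the time-1 flow of a Hamiltonian and $\phi$ is a pure parameter translation, and the composition identity \eqref{FPKAMdaH} is precisely how $N_{\nu+1}$ and $P_{\nu+1}$ were constructed in Section \ref{FPKAMsec-4}.

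The main obstacle, and the place where the careful scaling choices in \eqref{FPKAMCANSHU}--\eqref{FPKAMCANSHU2} earn their keep, is the simultaneous verification of the frequency-preserving assumption \eqref{FPKAMYINLI4JIASHE} and of the $P_+$-assumption \eqref{FPKAMYINLI8JIASHE}. The former requires that the drift $\sum p_{01}^j$ stays within the Diophantine neighborhood $\mathcal V$ of $\xi_0$ uniformly in $\nu$, which relies on the summability \eqref{FPKAMLEIJIA} and ultimately on the integrability in the Controllability Condition (H3). The latter requires that the loss $\Gamma(r_\nu-r_{\nu+1})$ incurred on each homological solve be absorbed into a positive fractional power of $\mu_\nu$, which is exactly why the exponent $(1+\rho)^\eta>2$ was built into the scheme from the start. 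Once these two compatibility checks are made, the rest of the induction is routine and yields the full iteration lemma.
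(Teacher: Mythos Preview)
Your proposal is essentially correct and follows the same approach as the paper: verify the running hypotheses \eqref{FPKAMH1}, \eqref{FPKAMYINLI3}, \eqref{FPKAMYINLI4JIASHE}, \eqref{FPKAMYINLI71}, \eqref{FPKAMYINLI8JIASHE} inductively by exploiting the super-exponential decay of $\mu_\nu$ against the merely geometric growth of $\Gamma(r_\nu-r_{\nu+1})$, then read off the raw one-step bounds from Lemmas \ref{FPKAMYINLI5}--\ref{FPKAMYINLI8} and upgrade them to the uniform $\mu_*^{1/2}2^{-\nu}$ form. The paper packages the same computation slightly differently, introducing an auxiliary constant $\zeta\gg 1$ with $\mu_0<(8^4c_0\zeta)^{-1/\rho}$ so that $\mu_\nu<\zeta^{-\nu}\mu_0$, and then checking $\zeta^{1-\rho}\geqslant 2$ to obtain $c_0\mu_\nu\Gamma_\nu\leqslant \mu_*^{1/2}2^{-\nu}$; your direct appeal to \eqref{FPKAMCANSHU3} is an equivalent route.

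One small inaccuracy: the verification of \eqref{FPKAMYINLI4JIASHE} does \emph{not} rely on the Controllability Condition (H3). The summability \eqref{FPKAMLEIJIA} is purely the numerical fact $\sum_j\mu_j\lesssim\mu_0$, and this alone gives $\sup_\xi|\sum_{j\leqslant\nu}p_{01}^j|<c\mu_0<\mu_0^{1/2}$. Condition (H3) enters only inside Lemma \ref{FPKAMcrucial}, to pass from $|\omega(\xi_{\nu+1})-\omega(\xi_\nu)|\lesssim\mu_\nu$ to a bound on $|\xi_{\nu+1}-\xi_\nu|$ via $\varphi$; for the iteration lemma one simply quotes the first estimate of Lemma \ref{FPKAMYINLI5}, as you do elsewhere in your write-up.
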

\begin{proof}
	The proof of this iterative lemma is based on verifying the assumptions in Lemmas \ref{FPKAM3.2} to \ref{FPKAMYINLI8}, i.e., verifying them for all fixed $\nu$. Without loss of generality, let $r_0 = 1$. Since $\varepsilon>0$ is sufficiently small, $\mu_0$ is also sufficiently small. Therefore, the first inequality in \eqref{FPKAMYINLI3} of Lemma \ref{FPKAM3.3}, and the assumptions in Lemmas \ref{FPKAMcrucial} and \ref{FPKAMYINLI8} all hold for $\nu = 0$. Moreover, by \eqref{FPKAMCANSHU}, the second inequality in \eqref{FPKAMYINLI3} of Lemma \ref{FPKAM3.3} also holds for $\nu = 0$. According to the definition of $\mu_\nu$, it is evident to calculate that
	\begin{equation}\label{eq44}
		\mu_\nu = (8^4 c_0)^{\frac{(1+\rho)^\nu - 1}{\rho}} \mu_0^{(1+\rho)^\nu}.
	\end{equation}
Fix $\zeta \gg 1$ and $0 < \mu_0 \ll 1$ such that the following holds:
\begin{equation}\label{eq29}
	\mu_0 < \left( \frac{1}{8^4 c_0 \zeta} \right)^{\frac{1}{\rho}} < 1.
\end{equation}
It then follows that
\begin{align}
	\mu_1 &= 8^4 c_0 \mu_0^{1+\rho} < \frac{1}{\zeta} \mu_0 < 1, \notag \\
	\mu_2 &= 8^4 c_0 \mu_1^{1+\rho} < \frac{1}{\zeta} \mu_1 < \frac{1}{\zeta^2} \mu_0, \notag \\
	&\vdots \notag \\
	\mu_\nu &= 8^4 c_0 \mu_{\nu-1}^{1+\rho} < \ldots < \frac{1}{\zeta^\nu} \mu_0. \label{eq28}
\end{align}
Let $ 	\Gamma_\nu = \Gamma(r_\nu - r_{\nu+1})$.
Note the relation
\begin{equation}\label{eq26}
	\frac{r_\nu - r_{\nu+1}}{r_0} = \frac{1}{2^{\nu+2}}. 
\end{equation}
Since
\begin{align*}
	\Gamma_\nu  \leqslant \int_1^\infty t^{6\tau+7} e^{-\frac{t}{2^{\nu+5}}}   dt  \leqslant (6\tau + 7)! 2^{(\nu+5)(6\tau+7)},
\end{align*}
it is evident that if $\zeta$ is sufficiently large, then
\begin{align*}
	\mu_\nu^\rho \Gamma_\nu^i < \mu_\nu^\rho (\Gamma_\nu^i + \Gamma_\nu) \leqslant 1, \;\; i = 1, 2,
\end{align*}
which implies that \eqref{FPKAMYINLI8JIASHE} in Lemma \ref{FPKAMYINLI8} holds for all $\nu \geq 1$, and
\begin{equation}\label{eq27}
	\mu_\nu \Gamma_\nu \leqslant \mu_\nu^{1-\rho} \leqslant \frac{\mu_0^{1-\rho}}{\zeta^{(1-\rho)\nu}}.
\end{equation}
Based on \eqref{eq26} and \eqref{eq27}, it is evident to verify that \eqref{FPKAMYINLI71} in Lemma \ref{FPKAMYINLI7} holds for all $\nu \geqslant 1$, whenever $\zeta \gg 1$ and $0 < \mu_0 \ll 1$.

By the second inequality in \eqref{FPKAMR} of Lemma \ref{FPKAM3.2} and \eqref{eq28}, we have
\begin{align*}
	{\mathop {\sup }\limits_{\xi  \in \mathcal{O}} }	\left\|\sum_{j=0}^{\nu} p_{01}^j\right\|_{D(s_\nu, r_\nu)} < c \sum_{j=0}^{\nu} \mu_j < c \sum_{j=0}^{\nu} \frac{1}{\zeta^j} \mu_0 < c \mu_0^{\frac{1}{2}},
\end{align*}
which verifies the assumption \eqref{FPKAMYINLI4JIASHE} in Lemma \ref{FPKAMcrucial}.

To verify the second inequality in \eqref{FPKAMYINLI3} of Lemma \ref{FPKAM3.3}, we note that when $\zeta \gg 1$, \eqref{eq44} and \eqref{eq28} imply
\begin{equation*}
	\frac{1}{4}\left(M^* + 2\right) \mu_{\nu-1}^{2\rho} K_{\nu+1}^{\tau+1} < \frac{1}{2^{\nu+2}}.
\end{equation*}
Thus,
\begin{align}
	2\left(M^* + 2\right) s_\nu K_{\nu+1}^{\tau+1} \leqslant \frac{s_{\nu-1}}{4}\left(M^* + 1\right) \mu_{\nu-1}^{2\rho} K_{\nu+1}^{\tau+1} \leqslant \frac{s_0}{2^{\nu+2}} < \frac{\gamma_0}{2^{\nu+2}} < \gamma_0, \notag
\end{align}
which verifies the second inequality in \eqref{FPKAMYINLI3} of Lemma \ref{FPKAM3.3} for all $\nu \geqslant 1$.

Note that \eqref{eq29} and \eqref{eq28} imply $\zeta^{1 - \rho} \geqslant 2$. Thus, for any $\nu \geqslant 1$, we have
\begin{align}
	c_0 \mu_\nu &\leqslant \frac{\mu_0}{2^\nu} \leqslant \frac{\mu_*^{\frac{1}{2}}}{2^\nu}, \label{eq30} \\
	c_0 \mu_\nu \Gamma_\nu &\leqslant \frac{\mu_0^{1 - \rho}}{2^\nu} \leqslant \frac{\mu_*^{\frac{1}{2}}}{2^\nu}, \label{eq36} \\
	c_0 s_\nu^{3} \mu_\nu &\leqslant \frac{\mu_0^{1 + 6\rho} s_0^{3}}{2^{\nu + 3}} \leqslant \frac{\mu_*}{2^\nu}. \label{eq37}
\end{align}
By \eqref{eq30} and the induction in \eqref{FPKAMYINLI53} of Lemma \ref{FPKAMYINLI5}, the second inequality in \eqref{FPKAMYINLI3} of Lemma \ref{FPKAM3.3} holds for all $\nu \geqslant 1$.

Since $(1 + \rho)^\eta > 2$, we have
\begin{align*}
	\frac{1}{2^{\nu + 6}} \left( \left[ -\ln \mu \right] + 1 \right)^\eta &\geqslant \frac{1}{2^{\nu + 6}} \left( \left(1 - (1 + \rho)^\nu \right) \ln \left(8^4 c_0 \right) - (1 + \rho)^\nu \ln \mu_0 \right)^\eta \\
	&\geqslant \frac{1}{2^{\nu + 6}} (1 + \rho)^{\eta \nu} \left(- \ln \mu_0 \right)^\eta \geqslant 1.
\end{align*}
Therefore, when $\mu$ is sufficiently small (which is guaranteed by $\varepsilon$ being sufficiently small), we have
	\begin{align*}
	&\ln\left(n+1\right)!+\left(\nu+6\right)n\ln2+3n\eta\ln\left(\left[-\ln\mu\right]+1\right)-\frac{1}{2^{\nu+6}}\left(\left[-\ln\mu\right]+1\right)^{3\eta}\\
	\leqslant&\ln\left(n+1\right)!+\left(\nu+6\right)n\ln2+3n\eta\ln\left(-\ln\mu+2\right)-\left(-\ln\mu\right)^{2\eta}\\
	\leqslant&-\ln\frac{1}{\mu}.
\end{align*}
It follows that
\begin{equation*}
	\int_{K_{\nu + 1}}^\infty t^n e^{-\frac{t}{2^{\nu + 6}}} dt \leqslant (n + 1)! 2^{(\nu + 6)n} K_{\nu + 1}^n e^{-\frac{K_{\nu + 1}}{2^{\nu + 6}}} \leqslant \mu,
\end{equation*}
which means that the assumption \eqref{FPKAMH1} in Lemma \ref{FPKAM3.2} holds for all $\nu \geqslant 1$.

In summary, the KAM step in Section \ref{FPKAMsec-4} holds for all $\nu \geqslant 0$, and therefore the conclusions of all lemmas hold.

Under these grounds, the first and second inequalities in \eqref{FPKAM91} of Lemma \ref{Iteration lemma} are proved by Lemma \ref{FPKAMYINLI5}, \eqref{eq30}, and \eqref{eq37}; the third inequality in \eqref{FPKAM91} is proved by \eqref{FPKAMyinli83} in Lemma \ref{FPKAMYINLI8} and \eqref{eq30}; the fourth inequality in \eqref{FPKAM91} is proved by \eqref{FPKAMYINLI5} in Lemma \ref{FPKAMYINLI5} and \eqref{eq30}; and finally, \eqref{FPKAMdafai} and \eqref{FPKAMdaH} are proved by Lemma \ref{FPKAMYINLI7}. Thus, we  complete the proof of Lemma \ref{Iteration lemma}.
\end{proof}

\subsubsection{Convergence}
The convergence is standard in KAM theory. For the sake of completeness, we briefly give the framework of the proof. Let
\[{\Psi ^\nu }: = {\Phi _1} \circ {\Phi _2} \circ  \cdots {\Phi _\nu },\;\;\nu  \geqslant 1.\]
Then by Lemma \ref{Iteration lemma}, we have
\[{D_{\nu  + 1}} \subset {D_\nu },\;\;{\Psi ^\nu }:{{\tilde D}_\nu } \to {{\tilde D}_0},\;\;{H_0} \circ {\Psi ^\nu } = {H_\nu } = {N_\nu } + {P_\nu }\]
and
\begin{equation}\label{FPKAMN+}
	{N_ + } = {e_\nu } + \left\langle {\omega \left( {{\xi _\nu }} \right) + \sum\limits_{j = 0}^\nu  {p_{01}^j\left( {{\xi _\nu }} \right)} ,y} \right\rangle  + {{\bar h}_\nu }\left( {y,{\xi _\nu }} \right),\;\;\nu  \geqslant 0,
\end{equation}
where $ {\Psi ^0} = {\rm{id}} $. Using \eqref{FPKAMdafai} and the identity
\[{\Psi ^\nu } =  {\rm{id}} + \sum\limits_{j = 0}^\nu  {\left( {{\Psi ^j} - {\Psi ^{j - 1}}} \right)} ,\]
we can verify that $ {\Psi ^\nu } $ is uniformly convergent and denote the limit by $ {\Psi ^\infty } $.

In view of Lemma \ref{Iteration lemma} and Lemma \ref{FPKAMcrucial}, it is clear to see that $ {e_\nu },{{\bar h}_\nu }$ and ${\xi _\nu } $ converge uniformly with respect to $ \nu $, and we denote their limits by $ {e_\infty },{{\bar h}_\infty },\xi _\infty:=\xi^* $, respectively. In addition, Lemma \ref{FPKAMcrucial} gives the following frequency-preserving equations:
\begin{align}
	{}&\omega \left( {{\xi _1}} \right) + p_{01}^0\left( {{\xi _1}} \right) = \omega \left( {{\xi _0}} \right),\notag \\
	{}&\omega \left( {{\xi _2}} \right) + p_{01}^0\left( {{\xi _2}} \right) + p_{01}^1\left( {{\xi _2}} \right) = \omega \left( {{\xi _0}} \right),\notag \\
	{}&\vdots\notag \\
	\label{FPKAMpinlvlie}{}&\omega \left( {{\xi _\nu }} \right) + p_{01}^0\left( {{\xi _\nu }} \right) +  \cdots  + p_{01}^{\nu  - 1}\left( {{\xi _\nu }} \right) = \omega \left( {{\xi _0}} \right).
\end{align}
Using the Cauchy property of $ \{\xi_\nu\}_{\nu \in \mathbb{N}^+} $ and taking limits at both sides of \eqref{FPKAMpinlvlie}, we get
\[\omega \left( {{\xi _\infty }} \right) + \sum\limits_{j = 0}^\infty  {p_{01}^j\left( {{\xi _\infty }} \right)}  = \omega \left( {{\xi _0}} \right).\]
Then on $ D\left( {{s_0}/2} \right) $,  we conclude from \eqref{FPKAMN+} that $ N_\nu $ converges uniformly to
\[{N_\infty } = {e_\infty } + \left\langle {\omega \left( {{\xi _0}} \right),y} \right\rangle  + {{\bar h}_\infty }\left( {y,{\xi _\infty }} \right).\]
Hence, \ $ {P_\nu } = {H_0} \circ {\Psi ^\nu } - {N_\nu } $ converges uniformly to $ {P_\infty } = {H_0} \circ {\Psi ^\infty } - {N_\infty } $ on $ D\left( {{s_0}/2,{r_0}/2} \right) $.
Since $ {\left\| {{P_\nu }} \right\|_{{D_\nu }}} \leqslant c\gamma _0^{n+6}s_\nu ^4{\mu _\nu } $, we have that $P_\nu$ converges to $ 0 $ as $ \nu  \to \infty  $, and $ J\nabla {P_\infty } = 0 $ on $ D\left( {0,{r_0}/2} \right) $.
Thus, for  $ {\xi _0} \in \mathcal{O}^o $ given in advance (see the Internal Condition (H1)), the Hamiltonian $ {H_\infty } = {N_\infty } + {P_\infty } $ admits an analytic, quasi-periodic, invariant $ n $-torus $\left\{ 0 \right\}\times {\mathbb{T}^n}   $ with the prescribed  Diophantine frequency $ \omega \left( {{\xi _0}} \right) $. Moreover, the asymptotic property for the parametric limit  $ \xi_\infty $, i.e., $ \left| {{\xi _\infty } - {\xi _0}} \right| =  \mathcal{O}\left( \varphi \left( \varepsilon  \right){ - \int_0^{\varepsilon } {\frac{{\varphi (x)}}{{x\ln x}}dx} } \right) = o\left( 1 \right) $ as $ \varepsilon \to 0^+ $, has been proved in Lemma \ref{FPKAMcrucial}.
\vspace{5mm}

Next, we prove the second part of Theoreom \ref{FPKAMT1}. For $ \vartheta:=\min_{\xi\in\partial \mathcal{V}}|\omega(\xi)-\omega(\xi_0)|$, define a set $ \mathscr{V} $ as 
\[\mathscr{V}:=\left\{\hat\xi_0\in \mathcal{V}:|\omega(\hat \xi_0)-\omega(\xi_0)|<\vartheta,\;|\langle k,\omega(\hat\xi_0)\rangle|>{\gamma}{|k|^{-\tau}},\;\forall 0 \ne k\in\mathbb{Z}^n\right\}.\]
Then for any  $\hat\xi_0,\tilde\xi_0\in\mathscr{V}$, we can obtain similar to  \eqref{FPKAMpinlvlie} that
\[	\omega(\xi_\infty(\hat\xi_0))+\sum_{j=0}^{\infty}p_{01}^j(\xi_\infty(\hat\xi_0))=\omega(\hat\xi_0),\;\;
\omega(\xi_\infty(\tilde\xi_0))+\sum_{j=0}^{\infty}p_{01}^j(\xi_\infty(\tilde\xi_0))=\omega(\tilde\xi_0),\]
and this leads to 
	\begin{align*}
		\omega(\xi_\infty(\hat\xi_0))-\omega(\xi_\infty(\tilde\xi_0))+\sum_{j=0}^{\infty}p_{01}^j(\xi_\infty(\hat\xi_0))-p_{01}^j(\xi_\infty(\tilde\xi_0))=\omega(\hat\xi_0)-\omega(\tilde\xi_0).
	\end{align*}
As a consequence, with the Relative Singularity Condition (H2) and the smallness of $ \varepsilon>0 $, we have
	\begin{align*}
		\left|\omega(\hat\xi_0)-\omega(\tilde\xi_0)\right|
	&	=\left|\omega(\xi_\infty(\hat\xi_0))-\omega(\xi_\infty(\tilde\xi_0))+\sum_{j=0}^{\infty}p_{01}^j(\xi_\infty(\hat\xi_0))-p_{01}^j(\xi_\infty(\tilde\xi_0))\right|\\
	&	\geqslant \left|\omega(\xi_\infty(\hat\xi_0))-\omega(\xi_\infty(\tilde\xi_0))\right|-\sum_{j=0}^{\infty}\left|p_{01}^j(\xi_\infty(\hat\xi_0))-p_{01}^j(\xi_\infty(\tilde\xi_0))\right |\\ &\geqslant\left|\omega(\xi_\infty(\hat\xi_0))-\omega(\xi_\infty(\tilde\xi_0))\right|-\left(c\sum_{j=0}^{\infty}\mu_j \right) \left|\omega(\xi_\infty(\hat\xi_0))-\omega(\xi_\infty(\tilde\xi_0))\right|\notag\\
&\geqslant\frac{1}{2}\left|\omega(\xi_\infty(\hat\xi_0))-\omega(\xi_\infty(\tilde\xi_0))\right|,
	\end{align*}
hence, by the Controllability Condition (H3) we arrive at the continuity of $\xi_\infty(\hat\xi_0)$ with respect to  $\hat\xi_0\in \mathscr{V}$ as (note that $ \varphi $ and $ \omega $ are both continuous, and $ \varphi(0)=0 $)
	\begin{align}  \label{FPKAMLIP}
	\left\vert\xi_\infty(\hat\xi_0)-\xi_\infty(\tilde\xi_0)\right\vert \leqslant \varphi \left(2 \left|\omega(\hat\xi_0)-\omega(\tilde\xi_0)\right|\right).
	\end{align}
In particular, if $ \omega(\xi) $ is injective,  we could estimate explicitly the modulus of continuity of $\xi_\infty(\hat\xi_0)$ from \eqref{FPKAMLIP} and Comment (C2). Denote by $ \varpi_+ (x)$ and $ \varpi_- (x) $ the modulus of continuity of  $ \omega(\xi) $ and its inverse $ \omega^{-1}(\xi) $, respectively. If $ \varpi_+ (x)$ is weaker than the Lipschitz type $ x $, then $ \varpi_- (x) =\mathcal{O}^{\#} (x) $, and $ \varphi(x) $ could be $ \mathcal{O}^{\#} (x)  $ by Comment (C2). In this case, \eqref{FPKAMLIP} gives the estimate 
\[	\left\vert\xi_\infty(\hat\xi_0)-\xi_\infty(\tilde\xi_0)\right\vert \lesssim \varpi_+ \left(\vert\hat\xi_0-\tilde\xi_0\vert\right).\]
In a similar way, if  $ \varpi_- (x)$ is weaker than the Lipschitz  type, then $ \varpi_+ (x) =\mathcal{O}^{\#} (x) $,  and $ \varphi(x) $ could be $ \mathcal{O}^{\#} (\varpi_- (x))  $, which leads to 
\[	\left\vert\xi_\infty(\hat\xi_0)-\xi_\infty(\tilde\xi_0)\right\vert \lesssim \varpi_- \left(\vert\hat\xi_0-\tilde\xi_0\vert\right).\]
Combining the above two cases, the modulus of continuity $ \varpi(x) $ of $\xi_\infty(\hat\xi_0)$ is the weaker one of  $ \varpi_+ (x)$ and $ \varpi_ - (x)$.
Finally, applying the previous process and replacing $\xi_0$ with $\hat\xi_0$, we obtain an analytic, quasi-periodic, invariant $ n $-torus with the  Diophantine frequency $\omega(\hat\xi_0)$.	This completes the proof of Theorem \ref{FPKAMT1}.

 \subsection{Proof of Corollary \ref{FPKAMCORO1}}
In view of the comments below the Controllability Condition (H3), we deduce from the Relative Singularity Condition (H2) that the function $ \varphi(\delta) = \mathcal{O}(\delta^{{\alpha}^{-1}})  $ as $ \delta\to 0^+ $ without loss of generality. Then the Controllability Condition (H3)  automatically holds for any $ 0<{\tau'}\leqslant1/2 $:
 \[ - \int_0^{\tau'}  {\frac{{\varphi (x)}}{{x\ln x}}dx}  \lesssim  - \int_0^{\tau'}  {\frac{1}{{{x^{1 - \alpha^{-1} }}\ln x}}dx}  \lesssim \int_0^{\tau'}  {\frac{1}{{{x^{1 - \alpha^{-1} }}}}dx}  <  + \infty. \]
 Therefore, by applying Theorem \ref{FPKAMT1}, we prove this corollary directly.
 

 \subsection{Proof of Corollary \ref{FPKAMCORO2}}
 Note that for a non-degenerate linear frequency mapping $ \omega(\xi) $ defined on the parameter set $ \mathcal{O}\subset \mathbb{R}^n $, almost all frequencies in $ (\omega(\mathcal{O}))^o $ are $ \tau $-Diophantine with $ \tau>\max\{n-1,1\} $, i.e., they satisfy \eqref{FPKAMDIO}, and the corresponding parameters belong to $ \mathcal{O}^o $. Therefore,  recalling Comment (C2) and the Lipschitz regularity for the  perturbation, we prove the desired conclusion by directly applying Theorem \ref{FPKAMT1}.

  \section*{Acknowledgements} 
 This work was supported in part by the National Natural Science Foundation of China (Grant Nos. 12071175 and 12471183).

\end{document}